\definecolor{ForestGreen}{rgb}{0.1,0.6,0.05}
\definecolor{EgyptBlue}{rgb}{0.063,0.1,0.6}
\newtheorem{theorem}{Theorem}
\newtheorem{proposition}[theorem]{Proposition}
\newtheorem{lemma}[theorem]{Lemma}
\newtheorem{corollary}[theorem]{Corollary}
\theoremstyle{definition}
\newtheorem{definition}[theorem]{Definition}
\newtheorem{remark}[theorem]{Remark}
\let\OLDthebibliography\thebibliography
\renewcommand\thebibliography[1]{
	\OLDthebibliography{#1}
	\setlength{\parskip}{1pt}
	\setlength{\itemsep}{1pt plus 0.3ex}
}
\numberwithin{equation}{section}
\numberwithin{theorem}{section}
\numberwithin{equation}{section}
\numberwithin{theorem}{section}
\newcommand{\N}{\mathbb{N}}
\newcommand{\R}{\mathbb{R}}
\title[Basisness and completeness of Fu\v{c}\'ik eigenfunctions]{Basisness and completeness of Fu\v{c}\'ik eigenfunctions for the Neumann Laplacian}
\author[F.~Baustian]{Falko Baustian\,$^\ast$}
\author[V.~Bobkov]{Vladimir Bobkov}
\address[F.~Baustian]{\newline\indent
	Institute of Mathematics, University of Rostock,
	\newline\indent 
	Ulmenstra{\ss}e 69, 18057 Rostock, Germany
}
\email{falko.baustian@uni-rostock.de}
\address[V.~Bobkov]{\newline\indent
	Institute of Mathematics, Ufa Federal Research Centre, RAS,
	\newline\indent 
	Chernyshevsky str. 112, 450008 Ufa, Russia
}
\email{bobkov@matem.anrb.ru}
\date{}
\subjclass[2010]{
	34L10,	
	34B08, 	
	47A70.	
}
\keywords{
	Fucik spectrum, Fucik eigenfunctions, Riesz basis, Paley-Wiener stability.
}
\thanks{
V.~Bobkov was supported in the framework of executing the development program of Volga Region Mathematical Center (agreement no.~075-02-2022-888).
This work is supported by the German-Russian Interdisciplinary Science Center (G-RISC) funded by the German Federal Foreign Office via the German Academic Exchange Service (DAAD), Project F-2021b-8\_d.
}
\thanks{
$^\ast$ Corresponding Author.
}
\begin{document}
	\begin{abstract} 
	    We investigate the basis properties of sequences of Fu\v{c}\'ik eigenfunctions of the one-dimensional Neumann Laplacian. 
	    We show that any such sequence is complete in $L^2(0,\pi)$
	    and a Riesz basis in the subspace of functions with zero mean. 	       
	    Moreover, we provide sufficient assumptions on Fu\v{c}\'ik eigenvalues which guarantee that the corresponding Fu\v{c}\'ik eigenfunctions form a Riesz basis in $L^2(0,\pi)$ and we explicitly describe the corresponding biorthogonal system.
	\end{abstract}
	
	\maketitle 
	
	\section{Introduction}
	
    We consider the generalized eigenvalue problem
	\begin{equation}\label{eq:fucik}
		\left\{
		\begin{alignedat}{1}
			-u''(x) &= \alpha u^{+}(x)-\beta u^{-}(x) ~~ \mbox{in }(0,\pi), \\
			u'(0) &=u'(\pi)=0,
		\end{alignedat}
		\right.
	\end{equation}
	where $\alpha,\beta\in\mathbb{R}$ are spectral parameters and $u^\pm := \max\{\pm u,0\}$ are the positive and negative parts of $u$ so that $u=u^+ - u^-$.
	This problem is called the \textit{Fu\v{c}\'ik eigenvalue problem} and it was introduced under Dirichlet boundary conditions by Fu\v{c}\'ik \cite{fucik} and Dancer \cite{dancer} to investigate elliptic equations with ``jumping'' nonlinearities. 
	The prominence of such equations and problems of the type \eqref{eq:fucik} is justified by the fact that they model mechanical systems with asymmetric oscillations. 
	Examples of such systems vary from simple spring-mass systems with two independent springs, to complex models of suspension bridges, see, e.g., \cite{DHMN,gazzola,LMk} and references therein for a detailed discussion.
	Apart from the importance in mechanics, problems of the type \eqref{eq:fucik} are challenging from the  mathematical point of view and different aspects have been intensively investigated, see, e.g., a short list  \cite{arias,massa,MW,rynne} for several results and overviews on the Fu\v{c}\'ik eigenvalue problem \eqref{eq:fucik}.
	
    The aim of the present paper is to study basis properties of sequences of solutions to the problem \eqref{eq:fucik}. Namely, we want to know if such systems of so-called Fu\v{c}\'ik eigenfunctions are complete in the real Hilbert space $L^2(0,\pi)$ or even form a Riesz basis in this space. 
	To the best of our knowledge, basisness for systems of Fu\v{c}\'ik eigenfunctions has been studied only under Dirichlet boundary conditions, see our previous works \cite{BB,BB1}. 
	We emphasize that the considered problem is a modification of the form of the standard eigenvalue problem for the Laplacian which it is not in the scope of the classical spectral theorem. 
	In the case of the standard eigenvalue problems, we refer to \cite{kato,skal1} for an overview on the basis properties of eigenfunctions of more complex linear operators. 
	Moreover, basis properties of eigenfunctions under Neumann boundary conditions were also studied for nonlinear operators, e.g., the $p$-Laplacian in \cite{BM2016,edmunds1}.

	\subsection{\texorpdfstring{Fu\v{c}\'ik}{Fucik} eigenvalues and eigenfunctions}\label{sec:fucik}
	
	The \textit{Fu\v{c}\'ik spectrum} $\Sigma \subset \mathbb{R}^2$ of the Fu\v{c}\'ik eigenvalue problem \eqref{eq:fucik} denotes the set of all pairs of parameters $(\alpha,\beta)$ for which \eqref{eq:fucik} possesses a nonzero classical solution.
	Any element of the Fu\v{c}\'ik spectrum is called a \textit{Fu\v{c}\'ik eigenvalue}, and any corresponding nonzero solution is called a \textit{Fu\v{c}\'ik eigenfunction}.
	
	Let us describe the structure of the Fu\v{c}\'ik spectrum $\Sigma$ and the corresponding eigenfunctions, see, e.g., \cite{exner}.
	For $\alpha=\beta =: \mu$, the problem \eqref{eq:fucik} becomes the standard eigenvalue problem $-u''= \mu u$ with homogeneous Neumann boundary conditions. 
	Its sequence of eigenvalues $\mu_n=n^2$, $n\in\N_0=\N\cup\{0\}$, has a constant function $\phi_0$ and the cosine functions $\phi_n=\cos(nx)$ as corresponding eigenfunctions. 
	Hereinafter, we specify $\phi_0$ to be equal to $\frac{\sqrt{2}}{2}$ in order to guarantee that $\left\|\phi_n\right\|=\sqrt{\frac{\pi}2}$ for every $n \in \N_0$.
   Hence, the lines $\{0\}\times\R$ and $\R\times\{0\}$ correspond to the constant Fu\v{c}\'ik eigenfunctions and form the trivial part of $\Sigma$.
    Any nonconstant Fu\v{c}\'ik eigenfunction starts with a scaled cosine function at $x=0$ to its first consecutive zero, followed by a combination of alternating positive and negative ``bumps'', and ends at $x=\pi$ with the extrema of a scaled cosine function, see, e.g., Figure \ref{fig:eigenf}. 
	The positive bumps consist of translations and multiples of $\cos(\sqrt{\alpha}x)$ and have the length $l_1=\frac{\pi}{\sqrt{\alpha}}$, whereas negative bumps have the form of translations and multiples of $\cos(\sqrt{\beta}x)$ with the length $l_2=\frac{\pi}{\sqrt{\beta}}$.
	
	\begin{figure}[h!]
		\centering
		\subfloat[][\centering $n=3$, $\alpha=5$]
		{\includegraphics[width=0.39\linewidth]{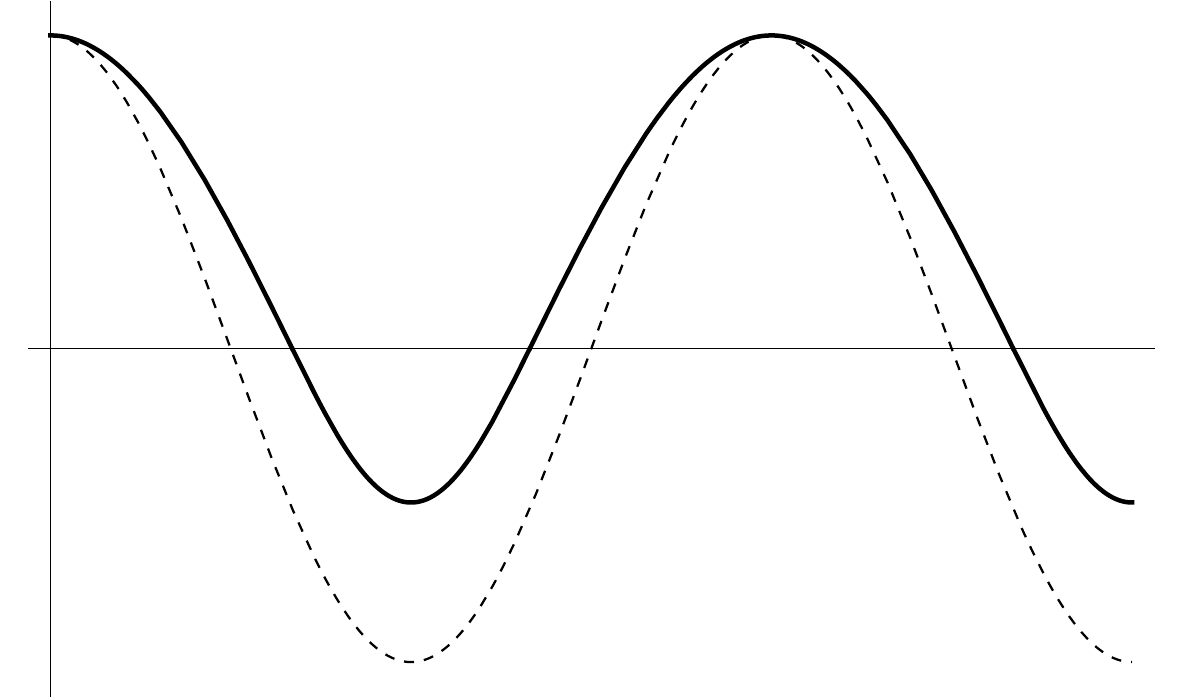}}
		\qquad\qquad
		\subfloat[][\centering $n=4$, $\alpha=60$]
		{\includegraphics[width=0.39\linewidth]{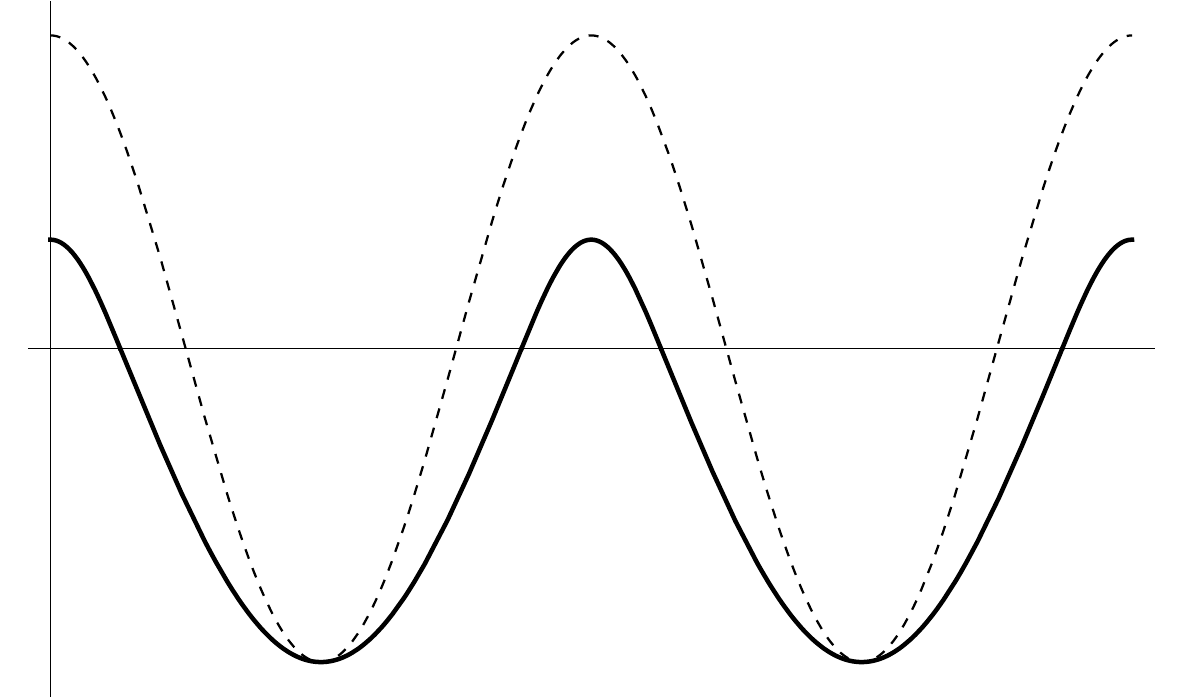}}
		\caption{A normalized Fu\v{c}\'ik eigenfunction $f^n_{\alpha,\beta}$ (solid) and $\cos(nx)$ (dashed)}
		\label{fig:eigenf}
	\end{figure}
	
    Therefore, the nontrivial part of $\Sigma$ is exhausted by the following hyperbola-type curves, see Figure \ref{fig1}:
	$$
	\Gamma_n
	=
	\left\{(\alpha,\beta)\in\R^2: \,
	\frac{n}{2}\frac{\pi}{\sqrt{\alpha}}
	+
	\frac{n}{2}\frac{\pi}{\sqrt{\beta}}
	=\pi
	\right\}, \quad n\in\N.
	$$
	
	\begin{figure}[h!]
		\center{\includegraphics[width=0.32\linewidth]{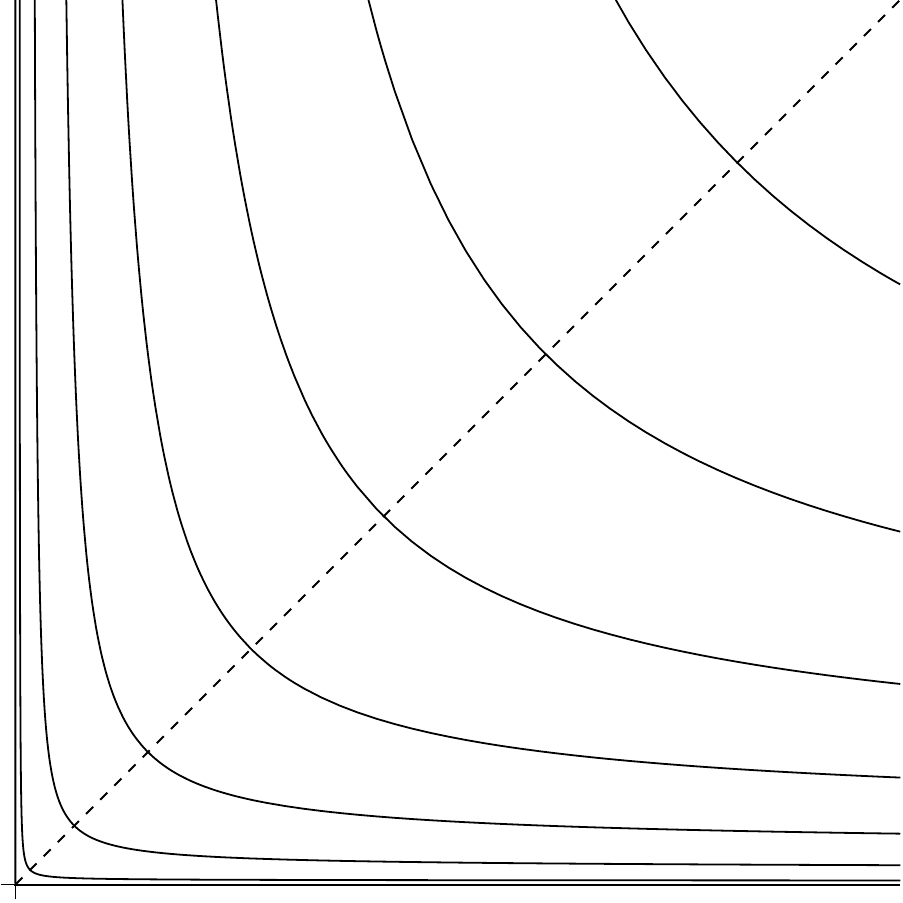}}
		\caption{Several curves of the Fu\v{c}\'ik spectrum}
		\label{fig1}
	\end{figure}
	
	\noindent
    Each $\Gamma_n$ contains the point $(\mu_n,\mu_n)$ corresponding to the 
    cosine eigenfunction $\phi_n$. Clearly, we have
	\begin{equation}\label{eq:ab}
	\alpha=\frac{n^2\beta}{(2\sqrt{\beta}-n)^2} \quad\mbox{and}\quad \beta=\frac{n^2\alpha}{(2\sqrt{\alpha}-n)^2}   
	\end{equation}
	for every Fu\v{c}\'ik eigenvalue $(\alpha,\beta)\in\Gamma_n$.	
	
	 Since each Fu\v{c}\'ik eigenvalue corresponds to multiple eigenfunctions, we uniquely specify a Fu\v{c}\'ik eigenfunction for every point of $\Sigma$.
	\begin{definition}
		Let $n \in \mathbb{N}$ and $(\alpha,\beta)\in \Gamma_n$. The \textit{normalized Fu\v{c}\'ik eigenfunction} $f^n_{\alpha,\beta}$
		is the $C^2$-solution of the problem \eqref{eq:fucik} with $f^n_{\alpha,\beta}(0)>0$ which is normalized by
		$$
		\|f^n_{\alpha,\beta}\|_{\infty}
		=
		\sup_{x\in[0,\pi]}|f^n_{\alpha,\beta}(x)|=1.
		$$
		Without loss of generality, we choose $f^0_{\alpha,\beta}=\phi_0 \equiv \frac{\sqrt{2}}{2}$ for every $(\alpha,\beta)\in(\{0\}\times\R)\cup(\R\times\{0\})$.
	\end{definition}
	
	The normalized Fu\v{c}\'ik eigenfunctions and their $2\pi$-periodic extensions on $\mathbb{R}$ can be characterized by the following piecewise definition.
	Let $n \in \N$.
	For $\alpha\geq n^2 \geq \beta$, we have
	\begin{equation}\label{eq:fucikpiecewise1}
		f^n_{\alpha,\beta}(x)=
		\left\{
		\begin{array}{clrl}
			\frac{\sqrt{\beta}}{\sqrt{\alpha}}\cos\left(\sqrt{\alpha}\,\left(x-\frac{2k}{2}l\right)\right) \quad &\mbox{for} 
			&\frac{2k}{2}l-\frac{l_1}{2}&\leq x<\frac{2k}{2}l+\frac{l_1}{2}, \\
			-\cos\left(\sqrt{\beta}\,\left(x-\frac{2k+1}{2}l\right)\right) \quad &\mbox{for}
			&\frac{2k+1}{2}l-\frac{l_2}{2}&\leq x<\frac{2k+1}{2}l+\frac{l_2}{2},
		\end{array}
		\right.
	\end{equation}
	and, for $\beta > n^2 > \alpha$, we have
	\begin{equation}\label{eq:fucikpiecewise2}
		f^n_{\alpha,\beta}(x)=
		\left\{
		\begin{array}{clrl}
			\cos\left(\sqrt{\alpha}\,\left(x-\frac{2k}{2}l\right)\right) \quad &\mbox{for} 
			&\frac{2k}{2}l-\frac{l_1}{2}&\leq x<\frac{2k}{2}l+\frac{l_1}{2}, \\
			-\frac{\sqrt{\alpha}}{\sqrt{\beta}}\cos\left(\sqrt{\beta}\,\left(x-\frac{2k+1}{2}l\right)\right) \quad &\mbox{for}
			&\frac{2k+1}{2}l-\frac{l_2}{2}&\leq x<\frac{2k+1}{2}l+\frac{l_2}{2},
		\end{array}
		\right.
	\end{equation}
	where $l=l_1+l_2$ and $k \in \mathbb{Z}$, cf.\ \cite[Note 2]{exner}.
	Notice that $f^n_{\alpha,\beta} \not\in C^3[0,\pi]$ provided $\alpha \neq \beta$.
	
	The symmetry of the curves $\Gamma_n$ of $\Sigma$ with respect to the diagonal $\alpha=\beta$ is related to certain properties of Fu\v{c}\'ik eigenfunctions which are not  presented in the case of the Dirichlet Laplacian.
	Namely, we make the following simple yet important observations.
	\begin{remark}\label{rem:sym}
		Since the distance between two extrema of a Fu\v{c}\'ik eigenfunction $f^n_{\alpha,\beta}$ is always  $\frac{l}{2}=\frac{l_1}{2}+\frac{l_2}{2}$, it must be equal to $\frac{\pi}{n}$ due to the form of $f^n_{\alpha,\beta}$. 
		Hence, the function $f^n_{\alpha,\beta}$ is $l$-periodic and, thus, $\frac{2\pi}{n}$-periodic as the cosine function $\phi_n$. 
		Furthermore, the extrema of Fu\v{c}\'ik eigenfunctions are located at the same points as the extrema of the corresponding cosine functions, see Figure \ref{fig:eigenf}.
		The piecewise definitions \eqref{eq:fucikpiecewise1} and \eqref{eq:fucikpiecewise2} also provide the relations $f^n_{\alpha,\beta}(x)=-f^n_{\beta,\alpha}\left(x-\frac{\pi}{n}\right)$ for every $n\in\N$ and $f^n_{\alpha,\beta}(x)=-f^n_{\beta,\alpha}\left(\pi-x\right)$ for each odd $n\in\N$. 
		These observations greatly simplify calculations in the proofs of our main results.
	\end{remark}
	
	We are interested in basis properties of sequences of Fu\v{c}\'ik eigenfunctions that we introduce in the following natural way.
	
	\begin{definition}\label{def:FS}
		We define a \textit{Fu\v{c}\'ik system} $F_{\alpha,\beta}=\{f^n_{\alpha(n),\beta(n)}\}_{n\in\mathbb{N}_0}$ as a sequence of normalized Fu\v{c}\'ik eigenfunctions with the mappings $\alpha,\beta\colon\N_0 \to\R$ satisfying $\alpha(0)=\beta(0)=0$ and $(\alpha(n),\beta(n))\in \Gamma_n$ for every $n\geq1$.
	\end{definition}
	
	This definition is consistent with the one from our previous work \cite{BB} on the basis properties of Fu\v{c}\'ik eigenfunctions for the Dirichlet Laplacian.

	\subsection{Main results}
	
	We now provide our main results on basis properties of Fu\v{c}\'ik systems. Let us mention that a Fu\v{c}\'ik system $F_{\alpha,\beta}$ is called complete in $L^2(0,\pi)$ if for every $f\in L^2(0,\pi)$ and every $\varepsilon>0$ there exist a sequence $\{c_n\}_{n\in\mathbb{N}_0}$ of constants and $N_0\in\mathbb{N}_0$ such that
	$$
	\Big\|f-\sum_{n=0}^{N} c_n f^n_{\alpha,\beta}\Big\|\leq\varepsilon
	\quad \mbox{for every }N\geq N_0.
	$$
	A Fu\v{c}\'ik system $F_{\alpha,\beta}$ is called a basis if it is complete and the sequence $\{c_n\}_{n\in\mathbb{N}_0}$ can be chosen independently of $\varepsilon$. A basis is called a Riesz basis if it is the image of a complete orthonormal system under some linear homeomorphism, see \cite{young}.
	
	One of the important tools in the study of basisness of sequences of functions is the concept of $\omega$-linear independence \cite{bary}. 
	A Fu\v{c}\'ik system $F_{\alpha,\beta}$ is called $\omega$-linearly independent if the strong convergence
	$$
	\sum_{n=0}^{\infty} \eta_nf^n_{\alpha,\beta}
	= 
	\lim_{k \to \infty} \Big\|\sum_{n=0}^{k}\eta_nf^n_{\alpha,\beta}\Big\|=0
	$$
	for any sequence of scalars $\{\eta_n\}_{n\in\mathbb{N}_0}$ implies $\eta_n=0$ for every $n\in\N_0$.
	Although the $\omega$-linear independence is usually a hardly verifiable property for general sequences of functions, we establish the following result.
	
	\begin{proposition}\label{thm:omega}
		Every Fu\v{c}\'ik system $F_{\alpha,\beta}$ is $\omega$-linearly independent.
	\end{proposition}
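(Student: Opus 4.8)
The plan is to test the hypothesis against the cosine functions $\phi_m$, $m\in\N_0$, exploiting that, by Remark~\ref{rem:sym}, each $f^n_{\alpha,\beta}$ is $\tfrac{2\pi}{n}$-periodic and even, exactly like $\phi_n$. Suppose $\sum_{n=0}^{\infty}\eta_n f^n_{\alpha,\beta}=0$ in the strong sense and put $g_N=\sum_{n=0}^{N}\eta_n f^n_{\alpha,\beta}$, so that $\|g_N\|\to 0$ and hence $\langle g_N,h\rangle\to 0$ for every $h\in L^2(0,\pi)$. A $\tfrac{2\pi}{n}$-periodic even function has all of its $2\pi$-periodic Fourier coefficients at frequencies that are not multiples of $n$ equal to zero, so $\langle f^n_{\alpha,\beta},\phi_m\rangle=0$ whenever $m\ge 1$ and $n\nmid m$, and $\langle f^0_{\alpha,\beta},\phi_m\rangle=0$ for $m\ge 1$ because $f^0_{\alpha,\beta}$ is constant. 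Taking $N\ge m$ and letting $N\to\infty$ in $\langle g_N,\phi_m\rangle$ gives, for every $m\ge 1$, the finite relation
\begin{equation}\label{eq:omega-div}
\sum_{d\mid m}\eta_d\,\langle f^d_{\alpha,\beta},\phi_m\rangle=0,
\end{equation}
the sum running over the positive divisors $d$ of $m$. This system is triangular with respect to divisibility, so everything reduces to the non-vanishing of the diagonal entries.

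The crucial --- and essentially the only nontrivial --- step is
\begin{equation}\label{eq:omega-diag}
\langle f^n_{\alpha,\beta},\phi_n\rangle\neq 0\qquad\text{for every }n\ge 1\text{ and }(\alpha,\beta)\in\Gamma_n,
\end{equation}
which I would establish by evaluating the integral directly. If $\alpha=\beta=n^2$, then $f^n_{\alpha,\beta}=\phi_n$ and $\langle f^n_{\alpha,\beta},\phi_n\rangle=\|\phi_n\|^2=\tfrac{\pi}{2}$. If $\alpha\neq\beta$, the substitution $x\mapsto x/n$ rescales $f^n_{\alpha,\beta}$ to a single Fu\v{c}\'ik wave and turns the defining relation of $\Gamma_n$ into $\tfrac1a+\tfrac1b=2$ with $a=\sqrt{\alpha}/n$, $b=\sqrt{\beta}/n$; inserting the piecewise description \eqref{eq:fucikpiecewise1} (valid for $\alpha\ge n^2\ge\beta$; the case $\beta>n^2>\alpha$ is analogous, or may be reduced to the former via the symmetries of Remark~\ref{rem:sym}) and computing the elementary trigonometric integrals over each bump yields
$$
\langle f^n_{\alpha,\beta},\phi_n\rangle=\frac{b\cos\frac{\pi}{2a}}{a^2-1}+\frac{b\cos\frac{\pi}{2b}}{b^2-1}.
$$
Here $a>1$ gives $a^2-1>0$ and $\cos\frac{\pi}{2a}>0$, while $\tfrac1a+\tfrac1b=2$ with $a>1$ forces $b\in(\tfrac12,1)$, hence $b^2-1<0$ and $\cos\frac{\pi}{2b}<0$; thus both summands are strictly positive and \eqref{eq:omega-diag} follows. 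I expect this calculation to be the main obstacle: the positive and negative bumps of $f^n_{\alpha,\beta}$ carry different amplitudes and frequencies and have to be integrated against $\cos(nx)$ bump by bump, with the case distinction between $\alpha\ge n^2$ and $\alpha<n^2$ kept under control. As a possible shortcut, multiplying \eqref{eq:fucik} by $\phi_n$ and integrating by parts (using $u'(0)=u'(\pi)=0$) gives the identity $\bigl(n^2-\tfrac{\alpha+\beta}{2}\bigr)\langle f^n_{\alpha,\beta},\phi_n\rangle=\tfrac{\alpha-\beta}{2}\langle|f^n_{\alpha,\beta}|,\phi_n\rangle$, which reduces \eqref{eq:omega-diag} for $\alpha\neq\beta$ to showing $\langle|f^n_{\alpha,\beta}|,\phi_n\rangle\neq 0$.

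Granting \eqref{eq:omega-div} and \eqref{eq:omega-diag}, the conclusion follows by strong induction on $n$. For $n=1$, the relation \eqref{eq:omega-div} with $m=1$ reads $\eta_1\langle f^1_{\alpha,\beta},\phi_1\rangle=0$, so $\eta_1=0$. If $\eta_d=0$ for all $1\le d<n$, then \eqref{eq:omega-div} with $m=n$ collapses to $\eta_n\langle f^n_{\alpha,\beta},\phi_n\rangle=0$, so $\eta_n=0$. Hence $\eta_n=0$ for every $n\ge 1$, and then $g_N=\eta_0 f^0_{\alpha,\beta}$ for all $N$, so $\|g_N\|\to 0$ forces $|\eta_0|\,\|f^0_{\alpha,\beta}\|=0$, i.e.\ $\eta_0=0$. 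Therefore every Fu\v{c}\'ik system $F_{\alpha,\beta}$ is $\omega$-linearly independent.
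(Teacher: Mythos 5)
Your proof is correct, and its skeleton is exactly the one used in the paper: test the vanishing sum against each $\phi_m$, observe that the resulting linear system is triangular with respect to divisibility, invert it by induction using the nonvanishing diagonal entries, and finish $\eta_0$ directly from the strong convergence. Where you differ is in how you justify the two key inputs. For the off-diagonal vanishing $\langle f^n_{\alpha,\beta},\phi_m\rangle=0$ when $n\nmid m$, the paper invokes Proposition \ref{prop:scalar} \ref{prop:scalar:I:iii}/\ref{prop:scalar:II:iii}, whose proof in Appendix \ref{sec:appendix} is a lengthy bump-by-bump trigonometric computation; your soft argument via the $\tfrac{2\pi}{n}$-periodicity and evenness of the extension of $f^n_{\alpha,\beta}$ (Remark \ref{rem:sym} and \eqref{eq:fucikpiecewise1}) gives the same vanishing with essentially no computation, which is a genuine simplification for the purposes of this proposition (the appendix computations are still needed elsewhere, e.g.\ for the exact coefficients in the proof of Theorem \ref{thm:main2}). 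For the diagonal nonvanishing, the paper uses Remark \ref{rem:nn}, which rests on the norm and distance formulas of Section \ref{sec:dist}; you instead rescale to a single wave and compute $\langle f^n_{\alpha,\beta},\phi_n\rangle=\frac{b\cos\frac{\pi}{2a}}{a^2-1}+\frac{b\cos\frac{\pi}{2b}}{b^2-1}$ with $a=\sqrt{\alpha}/n$, $b=\sqrt{\beta}/n$, $\tfrac1a+\tfrac1b=2$. This formula is correct: using $\cos\frac{\pi}{2a}=-\cos\frac{\pi}{2b}$ it collapses to $\frac{(a^2-b^2)\,b}{(a^2-1)(b^2-1)}\cos\frac{\pi}{2b}$, which agrees with the first line of \eqref{eq:scal:mdivn0} at $m=n$ and with Remark \ref{rem:nn}, and your sign argument (both summands strictly positive for $a>1$, $b\in(\tfrac12,1)$) is clean and arguably more transparent than reading the sign off the paper's closed forms. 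The integration-by-parts identity you mention as a shortcut is also correct, though it would still require showing $\langle|f^n_{\alpha,\beta}|,\phi_n\rangle\neq0$, so it is not a free lunch. In short: same proof strategy as the paper, with self-contained and somewhat more elementary justifications of the two lemmas the paper imports from its computational sections.
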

	
	Thanks to the $\omega$-linear independence, we obtain basisness for a wide class of Fu\v{c}\'{i}k systems.
	
	\begin{theorem}\label{thm:main1}
		Let $F_{\alpha,\beta}$ be a Fu\v{c}\'{i}k system.
		If the mappings $\alpha$ and $\beta$ satisfy
		\begin{equation}\label{eq:thm:main1}
			\sum_{n=1}^\infty 
			\frac{(\max(\sqrt{\alpha(n)},\sqrt{\beta(n)})-n)^2}{n^2}
			<
			\infty,
		\end{equation}
		then $F_{\alpha,\beta}$ is a Riesz basis in $L^2(0,\pi)$.
	\end{theorem}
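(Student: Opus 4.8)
The plan is to prove that $F_{\alpha,\beta}$ is \emph{quadratically close} to the orthogonal cosine basis $\{\phi_n\}_{n\in\N_0}$ of $L^2(0,\pi)$ and then to upgrade this, via a Fredholm-type argument based on Proposition~\ref{thm:omega}, to the Riesz basis property. Thus the whole difficulty is concentrated in one quantitative estimate, and the rest is soft.

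\textbf{Step 1: the basic estimate.} The heart of the argument is the inequality
\begin{equation*}
	\| f^n_{\alpha(n),\beta(n)}-\phi_n\|^2
	\le
	C\,\frac{(\max(\sqrt{\alpha(n)},\sqrt{\beta(n)})-n)^2}{n^2},
	\qquad n\in\N,
\end{equation*}
with a constant $C>0$ independent of $n$ and of the point $(\alpha(n),\beta(n))\in\Gamma_n$. To prove it, fix $n$, set $d_n:=\max(\sqrt{\alpha(n)},\sqrt{\beta(n)})-n\ge 0$, and, by the symmetry relations in Remark~\ref{rem:sym}, assume $\alpha(n)\ge n^2\ge\beta(n)$, so $\sqrt{\alpha(n)}=n+d_n$. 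If $d_n\ge n$, the bound is trivial since $|f^n_{\alpha,\beta}|\le 1$ gives $\|f^n_{\alpha,\beta}-\phi_n\|^2\le(\sqrt\pi+\sqrt{\pi/2})^2\le(\sqrt\pi+\sqrt{\pi/2})^2\,d_n^2/n^2$. So assume $d_n<n$. The equation defining $\Gamma_n$ (equivalently \eqref{eq:ab}) then yields $n-\sqrt{\beta(n)}=\tfrac{n d_n}{n+2d_n}\le d_n$ and $|l_1-\tfrac\pi n|+|l_2-\tfrac\pi n|\le\tfrac{2\pi d_n}{n^2}$. Both $f^n_{\alpha,\beta}$ and $\phi_n$ are even, $\tfrac{2\pi}{n}$-periodic, and share the same extrema points $\tfrac{k\pi}{n}$ (Remark~\ref{rem:sym}), hence
\begin{equation*}
	\| f^n_{\alpha,\beta}-\phi_n\|_{L^2(0,\pi)}^2
	=
	\frac n2\int_0^{2\pi/n}| f^n_{\alpha,\beta}-\phi_n|^2.
\end{equation*}
Using the piecewise formula \eqref{eq:fucikpiecewise1} on a single period, the amplitude defect $|\tfrac{\sqrt{\beta(n)}}{\sqrt{\alpha(n)}}-1|=O(d_n/n)$, the frequency defects $|\sqrt{\alpha(n)}-n|,|\sqrt{\beta(n)}-n|=O(d_n)$ (which on an interval of length $O(1/n)$ produce a pointwise discrepancy $O(d_n/n)$), and the bump-length defect above, one obtains $|f^n_{\alpha,\beta}(x)-\phi_n(x)|=O(d_n/n)$ uniformly for $x\in[0,\tfrac{2\pi}{n}]$ with an absolute implied constant. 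Integrating gives $\int_0^{2\pi/n}|f^n_{\alpha,\beta}-\phi_n|^2=O(d_n^2/n^3)$, and the claimed estimate follows.

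\textbf{Step 2: summability and the Fredholm argument.} Since $f^0_{\alpha,\beta}=\phi_0$, Step~1 and hypothesis \eqref{eq:thm:main1} give $\sum_{n=0}^\infty\|f^n_{\alpha(n),\beta(n)}-\phi_n\|^2<\infty$. Now $\{\phi_n\}_{n\in\N_0}$ is an orthogonal basis of $L^2(0,\pi)$ with $\|\phi_n\|=\sqrt{\pi/2}$ for all $n$, so $\{\sqrt{2/\pi}\,\phi_n\}_{n\in\N_0}$ is a complete orthonormal system. Define the linear operator $T$ on $L^2(0,\pi)$ by $T\phi_n:=f^n_{\alpha(n),\beta(n)}$; by the summability it is bounded and $T-I$ is Hilbert--Schmidt (its Hilbert--Schmidt norm squared with respect to $\{\sqrt{2/\pi}\,\phi_n\}$ equals $\tfrac2\pi\sum_n\|f^n_{\alpha,\beta}-\phi_n\|^2<\infty$), in particular compact, so $T$ is Fredholm of index $0$. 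By Proposition~\ref{thm:omega}, $F_{\alpha,\beta}$ is $\omega$-linearly independent: if $x=\sum_n c_n\phi_n\in L^2(0,\pi)$ satisfies $Tx=0$, then $\sum_n c_n f^n_{\alpha,\beta}=0$ in $L^2(0,\pi)$, hence $c_n=0$ for every $n$ and $x=0$. Thus $T$ is injective, and an injective Fredholm operator of index $0$ is a linear homeomorphism of $L^2(0,\pi)$ by the bounded inverse theorem. Since $\{\phi_n\}$ is the image of an orthonormal basis under a linear homeomorphism, so is $\{f^n_{\alpha,\beta}\}_{n\in\N_0}=\{T\phi_n\}_{n\in\N_0}$; therefore it is a Riesz basis in $L^2(0,\pi)$.

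\textbf{Main obstacle.} The only delicate point is Step~1: deriving the bound $\|f^n_{\alpha,\beta}-\phi_n\|=O(d_n/n)$ with a constant uniform in $n$, which requires controlling simultaneously the amplitude ratio, the two distinct frequencies $\sqrt{\alpha(n)},\sqrt{\beta(n)}$, and the unequal bump widths $l_1,l_2$ in \eqref{eq:fucikpiecewise1}--\eqref{eq:fucikpiecewise2}; the structural fact that prevents the frequency mismatch from accumulating over the interval $(0,\pi)$ is exactly the alignment of the extrema of $f^n_{\alpha,\beta}$ and $\phi_n$ noted in Remark~\ref{rem:sym}, which localizes the comparison to a single period of length $\tfrac{2\pi}{n}$. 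Once Step~1 is established, Step~2 is routine.
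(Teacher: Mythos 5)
Your proposal is correct, and its skeleton coincides with the paper's: establish that $F_{\alpha,\beta}$ is quadratically close to the orthogonal cosine system and combine this with the $\omega$-linear independence of Proposition \ref{thm:omega}. The differences are in the two technical ingredients. For the key estimate $\|f^n_{\alpha,\beta}-\phi_n\|^2\le C\,d_n^2/n^2$, the paper first computes the distance \emph{exactly} (Lemma \ref{lem:distances}, by integrating the piecewise formula \eqref{eq:fucikpiecewise1}) and then bounds the cosine term by a cubic polynomial, which yields the explicit constant $(12+\pi^2)\pi/9$; you instead prove a uniform pointwise bound $|f^n_{\alpha,\beta}-\phi_n|=O(d_n/n)$ on one period from the amplitude defect $|\sqrt{\beta}/\sqrt{\alpha}-1|=O(d_n/n)$ and the frequency defects $|\sqrt{\alpha}-n|,|\sqrt{\beta}-n|\le d_n$ acting on intervals of length $O(1/n)$ (note that on the negative bump $\cos(nx)=-\cos(n(x-\pi/n))$ exactly, so the comparison is clean there), which is shorter and avoids the trigonometric integration, but gives no explicit constant — the paper uses its sharp constants later, e.g.\ in Remark \ref{rem:conv} to show that \eqref{eq:thm:main1} is equivalent to the weaker quadratic-closeness assumption, and the exact formulas feed Remark \ref{rem:nn}. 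For the final step, the paper simply cites the Bari-type result \cite[Theorem V-2.20]{kato}, whereas you re-prove it: $T=I+K$ with $K$ Hilbert--Schmidt, so $T$ is Fredholm of index $0$, injective by $\omega$-linear independence (the partial sums $\sum_{n\le k}c_n f^n_{\alpha,\beta}=T(\sum_{n\le k}c_n\phi_n)$ do converge strongly to $Tx=0$, so the hypothesis of Proposition \ref{thm:omega} applies), hence a homeomorphism carrying the orthonormal system $\{\sqrt{2/\pi}\,\phi_n\}$ (up to the harmless scalar $\sqrt{\pi/2}$) onto $F_{\alpha,\beta}$; this is essentially the standard proof of the cited theorem, so it is valid and self-contained. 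Two small glosses are harmless: the reduction to $\alpha\ge n^2\ge\beta$ via Remark \ref{rem:sym} and the identity $\|f^n_{\alpha,\beta}-\phi_n\|^2_{L^2(0,\pi)}=\tfrac n2\int_0^{2\pi/n}|f^n_{\alpha,\beta}-\phi_n|^2$ need the extremum symmetry for odd $n$ (half a period lands in $(0,\pi)$), but since your pointwise bound is uniform on all of $[0,\pi]$ you could bypass the period counting entirely by $\|f^n_{\alpha,\beta}-\phi_n\|^2\le\pi\sup|f^n_{\alpha,\beta}-\phi_n|^2$.
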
	
	
	The assumption \eqref{eq:thm:main1} is optimal in the sense of the method of the proof, see a discussion in Remark \ref{rem:conv} below.
	Estimating each term of the sum in \eqref{eq:thm:main1} by $C^2/n^{1+\varepsilon}$ for some fixed $C, \varepsilon > 0$, we derive the following result.
	\begin{corollary}
	Let $F_{\alpha,\beta}$ be a Fu\v{c}\'ik system. 
	Let $C,\varepsilon>0$ be fixed. 
	If the mappings $\alpha$ and $\beta$ satisfy
	\begin{equation*}		        \max(\sqrt{\alpha(n)},\sqrt{\beta(n)})
	\leq 
	n
	+ 
	C \, n^{(1-\varepsilon)/2}
	\end{equation*}
	for every $n \in \mathbb{N}$, 
	then $F_{\alpha,\beta}$ is a Riesz basis in $L^2(0,\pi)$.
\end{corollary}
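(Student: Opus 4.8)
The plan is to obtain the corollary as an immediate consequence of Theorem~\ref{thm:main1}: it suffices to verify that the pointwise bound on $\max(\sqrt{\alpha(n)},\sqrt{\beta(n)})$ forces the series in \eqref{eq:thm:main1} to converge.

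First I would record an elementary lower bound, namely that $\max(\sqrt{\alpha(n)},\sqrt{\beta(n)}) \ge n$ for every $n \in \N$. Indeed, since $(\alpha(n),\beta(n)) \in \Gamma_n$, the defining equation of $\Gamma_n$ can be rewritten as $\frac{n}{2\sqrt{\alpha(n)}} + \frac{n}{2\sqrt{\beta(n)}} = 1$. Assuming without loss of generality that $\sqrt{\alpha(n)} = \max(\sqrt{\alpha(n)},\sqrt{\beta(n)})$, the summand $\frac{n}{2\sqrt{\alpha(n)}}$ is the smaller of the two, hence $\frac{n}{2\sqrt{\alpha(n)}} \le \frac12$, i.e.\ $\sqrt{\alpha(n)} \ge n$. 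In particular the quantity $\max(\sqrt{\alpha(n)},\sqrt{\beta(n)}) - n$ is nonnegative, so the hypothesis $\max(\sqrt{\alpha(n)},\sqrt{\beta(n)}) \le n + C\,n^{(1-\varepsilon)/2}$ may be squared without reversing it.

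Combining these two facts gives $0 \le \max(\sqrt{\alpha(n)},\sqrt{\beta(n)}) - n \le C\,n^{(1-\varepsilon)/2}$, whence
\[
\frac{(\max(\sqrt{\alpha(n)},\sqrt{\beta(n)})-n)^2}{n^2}
\le \frac{C^2 n^{1-\varepsilon}}{n^2} = \frac{C^2}{n^{1+\varepsilon}}
\qquad\text{for every } n \in \N.
\]
Since $\varepsilon > 0$, the $p$-series $\sum_{n=1}^\infty n^{-(1+\varepsilon)}$ converges, so by comparison the series in \eqref{eq:thm:main1} is finite. Theorem~\ref{thm:main1} then yields that $F_{\alpha,\beta}$ is a Riesz basis in $L^2(0,\pi)$.

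There is essentially no obstacle here: the statement is a direct corollary, and the only step requiring a line of care is the sign observation $\max(\sqrt{\alpha(n)},\sqrt{\beta(n)}) \ge n$, ensuring that squaring the hypothesis is legitimate; the rest is just the convergence of a $p$-series with exponent $1+\varepsilon > 1$.
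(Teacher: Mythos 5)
Your proposal is correct and follows exactly the paper's route: the hypothesis gives the termwise bound $C^2/n^{1+\varepsilon}$ for the series in \eqref{eq:thm:main1}, and Theorem \ref{thm:main1} applies. The sign observation $\max(\sqrt{\alpha(n)},\sqrt{\beta(n)})\geq n$ (immediate from $(\alpha(n),\beta(n))\in\Gamma_n$) is a nice touch of rigor that the paper leaves implicit, but it does not change the argument.
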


    We observe that the assumption \eqref{eq:thm:main1} holds for each Fu\v{c}\'{i}k system $F_{\alpha,\beta}$ that has only a finite number of elements with $f^n_{\alpha,\beta}\neq\phi_n$, i.e., $\alpha(n)=n^2=\beta(n)$ for a.e.\ $n\in\mathbb{N}_0$. This is useful when we are interested in the approximation of a function $f\in L^2(0,\pi)$ by a finite sequence of arbitrary Fu\v{c}\'{i}k eigenfunctions $\{f^n_{\alpha,\beta}\}_{n=0}^{N}$, i.e.,
    \begin{equation}\label{eq:approx}
	    f \approx \sum_{n=0}^N c_nf^n_{\alpha,\beta}\,,
	\end{equation}
	for a sufficient large $N\in\mathbb{N}$. The Fu\v{c}\'{i}k system
	$$
	F_{\alpha,\beta}=\{f^n_{\alpha,\beta}\}_{n=0}^{N}\cup\{\phi_n\}_{n\geq N+1}
	$$
	satisfies \eqref{eq:thm:main1} and, hence, it is a Riesz basis in $L^2(0,\pi)$. We remark that every Fu\v{c}\'{i}k system $F_{\alpha,\beta}$ which is a Riesz basis in $L^2(0,\pi)$ possesses a unique complete biorthogonal sequence $\{\psi_n\}_{n\in\mathbb{N}_0}$ that we describe explicitly in Section \ref{sec:biort}. Thus, the coefficients of the approximation \eqref{eq:approx} are given by $c_n=\langle f,\psi_n \rangle$ for every $n\in\mathbb{N}_0$, where $\langle \cdot,\cdot \rangle$ stands for the standard scalar product in $L^2(0,\pi)$.

    Without requiring the assumption \eqref{eq:thm:main1}, we provide the following general result.
	\begin{theorem}\label{thm:main2}
		Let $F_{\alpha,\beta}$ be an arbitrary Fu\v{c}\'{i}k system.
		Then $F_{\alpha,\beta}$ is complete in $L^2(0,\pi)$.
		Moreover, the system
		$\{f^n_{\alpha,\beta}(x) - 
		\frac{\sqrt{2}}{\pi}\langle f^n_{\alpha,\beta},\phi_0\rangle\}_{n \in \mathbb{N}}$ is a Riesz basis in the space
		$$
		L^2_\perp(0,\pi)
		:=
		\{
		u \in L^2(0,\pi):~ \langle u,\phi_0\rangle=0
		\}.
		$$
	\end{theorem}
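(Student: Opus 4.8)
The plan is to deduce the completeness of $F_{\alpha,\beta}$ in $L^2(0,\pi)$ from the Riesz basis claim. Write $g_n := f^n_{\alpha,\beta} - \tfrac{\sqrt2}{\pi}\langle f^n_{\alpha,\beta},\phi_0\rangle$ for $n\in\mathbb N$. Since $f^0_{\alpha,\beta}=\phi_0$ and the subtracted term is a scalar multiple of $\phi_0$, one has $f^n_{\alpha,\beta}\in g_n+\mathbb R\phi_0$, hence $\operatorname{span}\{f^n_{\alpha,\beta}\}_{n=0}^N=\operatorname{span}(\{\phi_0\}\cup\{g_n\}_{n=1}^N)$ for every $N$. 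Thus, if $\{g_n\}_{n\in\mathbb N}$ is complete in $L^2_\perp(0,\pi)$, then, as $\mathbb R\phi_0$ is the orthogonal complement of $L^2_\perp(0,\pi)$, the system $F_{\alpha,\beta}$ is complete in $L^2(0,\pi)$. So it suffices to prove that $\{g_n\}_{n\in\mathbb N}$ is a Riesz basis of $L^2_\perp(0,\pi)$ (which already gives its completeness there).

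\textbf{A triangular operator on the cosine side.} Fix the orthonormal basis $e_m:=\sqrt{2/\pi}\cos(mx)$, $m\in\mathbb N$, of $L^2_\perp(0,\pi)$. By Remark~\ref{rem:sym} (together with the evenness of $f^n_{\alpha,\beta}$, which follows from uniqueness for the initial value problem for \eqref{eq:fucik}), each $f^n_{\alpha,\beta}$ is even and $\tfrac{2\pi}{n}$-periodic, so its cosine series on $(0,\pi)$ is carried by the multiples of $n$:
\[
f^n_{\alpha,\beta}(x)=\sum_{j\ge0}c^{(n)}_j\cos(jnx),\qquad c^{(n)}_j=\tfrac1\pi\!\int_{-\pi}^{\pi}\!f^n_{\alpha,\beta}(x)\cos(jnx)\,dx ,
\]
with $c^{(n)}_0=\tfrac{\sqrt2}{\pi}\langle f^n_{\alpha,\beta},\phi_0\rangle$. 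Moreover, substituting $y=nx$ in \eqref{eq:fucikpiecewise1}--\eqref{eq:fucikpiecewise2} shows $f^n_{\alpha,\beta}(x)=w_n(nx)$ for the $2\pi$-periodic ``profile'' $w_n$ attached to the point $(\alpha(n)/n^2,\beta(n)/n^2)\in\Gamma_1$ (cf.\ \eqref{eq:ab}), so that $c^{(n)}_j$ depends only on that point of $\Gamma_1$, say $c^{(n)}_j=c_j(\alpha(n)/n^2,\beta(n)/n^2)$. Consequently $g_n=\sum_{j\ge1}c^{(n)}_j\cos(jnx)$, and defining $S$ on $L^2_\perp(0,\pi)$ by $Se_n=\sqrt{2/\pi}\,g_n=\sum_{j\ge1}c^{(n)}_je_{jn}$ we obtain an operator whose matrix $(S_{mn})$ equals $c^{(n)}_{m/n}$ when $n\mid m$ and $0$ otherwise; in particular $S$ is lower triangular for the divisibility order, with diagonal $S_{nn}=c^{(n)}_1$. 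Now $\{g_n\}_{n\in\mathbb N}$ is a Riesz basis of $L^2_\perp(0,\pi)$ if and only if $S$ is a linear homeomorphism.

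\textbf{Invertibility of $S$ via uniform profile estimates.} I would write $S=D+N$ with $D:=\operatorname{diag}(c^{(n)}_1)$ and $N$ strictly lower triangular, $Ne_n=\sum_{j\ge2}c^{(n)}_je_{jn}$, and reduce the problem to two uniform bounds over $\Gamma_1$:
\begin{enumerate}[label=(\roman*)]
\item $0<\inf_{\Gamma_1}c_1$ and $\sup_{\Gamma_1}c_1<\infty$;
\item $\displaystyle\sum_{j\ge2}\sup_{\Gamma_1}|c_j|<\inf_{\Gamma_1}c_1 .$
\end{enumerate}
The $\ell^1$-summability in (ii) already yields boundedness of $S$, (i) makes $D$ a homeomorphism with $\|D^{-1}\|=(\inf_{\Gamma_1}c_1)^{-1}$, and, since $N=\sum_{j\ge2}N_j$ with $N_je_n=c^{(n)}_je_{jn}$ and $\|N_j\|=\sup_{\Gamma_1}|c_j|$, the bound (ii) gives $\|D^{-1}N\|\le\big(\inf_{\Gamma_1}c_1\big)^{-1}\sum_{j\ge2}\sup_{\Gamma_1}|c_j|<1$; hence $S=D(I+D^{-1}N)$ is boundedly invertible by a Neumann series. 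Therefore $\{\sqrt{2/\pi}\,g_n\}_{n\in\mathbb N}=\{Se_n\}_{n\in\mathbb N}$, and so $\{g_n\}_{n\in\mathbb N}$, is a Riesz basis of $L^2_\perp(0,\pi)$, and by the reduction step $F_{\alpha,\beta}$ is complete in $L^2(0,\pi)$. As a bonus, $S^{-1}$ is again lower triangular, so the biorthogonal system computed in Section~\ref{sec:biort} consists of cosine polynomials supported on the divisors of $n$.

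\textbf{The main obstacle.} Everything above is routine once (i) and (ii) are available; establishing them uniformly in the parameter running along $\Gamma_1$ is the real work. From \eqref{eq:fucikpiecewise1}--\eqref{eq:fucikpiecewise2}, $c_j$ is an elementary integral of a shifted cosine against $\cos(jy)$ over the two bump-intervals, to be estimated uniformly. The geometric fact to exploit is that along $\Gamma_1$ a bump of large amplitude necessarily has small frequency and a bump of large frequency necessarily has small amplitude, so that the slope of the profile at the two junction points is at most $\min(\sqrt{\alpha},\sqrt{\beta})\le 1$; this forces the coefficients $c_j$ to decay like $O(1/j^2)$ with a constant \emph{independent} of the parameter, which gives (ii) once one verifies — e.g.\ at the two endpoints of $\Gamma_1$, where the profile degenerates to a rectified cosine with $c_1=\tfrac{4}{3\pi}$ and $\sum_{j\ge2}|c_j|=\tfrac{2}{3\pi}$ — that the strict inequality holds, and (i) by the same analysis (the infimum of $c_1$ being approached at these endpoints). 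I expect the delicate step to be controlling the contribution of the narrow, low-amplitude bump to the individual $c_j$ for $j$ comparable to its frequency; the symmetries $f^n_{\alpha,\beta}(x)=-f^n_{\beta,\alpha}(x-\tfrac\pi n)$ from Remark~\ref{rem:sym} allow one to restrict to $\alpha\ge n^2\ge\beta$, which halves the casework.
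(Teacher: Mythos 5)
Your framework is sound and, in fact, runs parallel to the paper's: your operator $S$ with $Se_n=\sum_{j}c^{(n)}_je_{jn}$ is exactly the paper's representation $g_n(x)=\cos(nx)+\sum_k C_{n,k}T_k\cos(nx)$ built from the dilation structure $f^n_{\alpha,\beta}(x)=f^1_{\gamma_n,\cdot}(nx)$, and your Neumann-series inversion of $S=D(I+D^{-1}N)$ plays the role of the Duffin--Eachus stability theorem \cite{duff} (the paper compares $S$ with the identity and needs $\sum_k\sup_n|C_{n,k}|<1$; you compare with the diagonal and need $\sum_{j\ge2}\sup|c_j|<\inf c_1$ --- both work). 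The reduction of completeness of $F_{\alpha,\beta}$ in $L^2(0,\pi)$ to the Riesz basis property of $\{g_n\}$ in $L^2_\perp(0,\pi)$ is also correct and even a bit cleaner than the paper's $\varepsilon$-argument.

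The genuine gap is that your conditions (i) and (ii) --- the uniform bounds on the profile coefficients along $\Gamma_1$ --- are asserted, not proved, and they are the actual content of the theorem. Your heuristic (junction slopes bounded by $\min(\sqrt{\alpha},\sqrt{\beta})\le 1$, hence $|c_j|=O(1/j^2)$ with a parameter-independent constant) does not suffice: the second derivative of the profile is of size $\sqrt{\alpha\beta}$, which is unbounded along $\Gamma_1$, so one must integrate by parts against the $L^1$-norm of $w''$ and, more importantly, the resulting constant must be computed explicitly, because (ii) is a strict quantitative inequality between $\sum_{j\ge 2}\sup_{\Gamma_1}|c_j|$ and $\inf_{\Gamma_1}c_1$. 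Checking the two degenerate endpoints of $\Gamma_1$ (where indeed $c_1=\tfrac{4}{3\pi}$ and $\sum_{j\ge2}|c_j|=\tfrac{2}{3\pi}$) does not establish that the extrema over the non-compact curve $\Gamma_1$ are attained there; some monotonicity or uniform estimate in the parameter $\gamma\in(1,\infty)$ is required. This is precisely what the paper supplies: closed-form coefficients \eqref{eq:Ak}--\eqref{eq:ak2} obtained from Proposition \ref{prop:scalar}, elementary cosine bounds such as \eqref{eq:cos}, and the monotonicity Lemma \ref{lem:rational}, yielding $|C_{n,1}|\le\tfrac{12+\pi^2}{36}$ and $|C_{n,k}|\le\tfrac{2}{4k^2-1}$ with total sum $\approx 0.94<1$. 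Without carrying out these (or equivalent) explicit uniform estimates, your proof is incomplete at its decisive step.
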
	
	
	The rest of the article is organized as follows.
	In Section \ref{sec:2}, we provide expressions for the scalar products $\langle f^n_{\alpha,\beta},\phi_m\rangle$ for all combinations of $n,m \in \mathbb{N}_0$ and prove Proposition \ref{thm:omega}.
	In Section \ref{sec:dist}, we derive expressions for the distances $\|f^n_{\alpha,\beta}-\phi_n\|^2$ and prove Theorem \ref{thm:main1}.
	Section~\ref{sec:proof} is devoted to the proof of Theorem \ref{thm:main2} and also provides an alternative proof of the basisness of Fu\v{c}\'ik systems under the assumption \eqref{eq:thm:main1}.
	In Section \ref{sec:biort}, we characterize elements of biorthogonal sequences associated with Fu\v{c}\'ik systems.
	In Appendix \ref{sec:appendix}, we give a detailed proof of Proposition \ref{prop:scalar}.
    Finally, Appendix \ref{sec:appendix:rational} contains an auxiliary technical lemma which we employ in the proof of Theorem \ref{thm:main2}.

	\section{Scalar products and Proof of Proposition \ref{thm:omega}}\label{sec:2}
	In this section, we provide formulas for the scalar products of normalized Fu\v{c}\'ik eigenfunctions $f^n_{\alpha,\beta}$ and cosine functions $\phi_m$ and, using them, we prove the $\omega$-linear independence of arbitrary Fu\v{c}\'ik systems stated in Proposition \ref{thm:omega}.
	
	Let us recall the following notations:
	$$
	l_1 = \frac{\pi}{\sqrt{\alpha}},
    \qquad
    l_2 = \frac{\pi}{\sqrt{\beta}},
    \qquad
    l = l_1 + l_2 = \frac{2\pi}{n}.
	$$
	\begin{proposition}\label{prop:scalar}
		Let $m, n \in \mathbb{N}$ and $(\alpha,\beta) \in \Gamma_n$. 
		Then the following assertions hold true:
		\begin{enumerate}[label={\rm(\Roman*)}]
			\item\label{prop:scalar:I}
			Let $\alpha\geq n^2\geq\beta$.
			\begin{enumerate}[label={\rm(\roman*)}]
				\item\label{prop:scalar:I:i}
				If $m/n \in \mathbb{N}$ and $\alpha \neq m^2$, $\beta \neq m^2$, then 
				\begin{align}
					\notag
					\langle f^n_{\alpha,\beta},\phi_m\rangle
					&= (-1)^{m/n}\frac{(\beta-\alpha)\sqrt{\beta}\,n}{(m^2-\alpha)(m^2-\beta)}\cos\left(\frac{ml_2}{2}\right) \\
					\label{eq:scal:mdivn0}
					&= \frac{4\alpha^2n^2(n-\sqrt{\alpha})}{(m^2-\alpha)(m^2(2\sqrt{\alpha}-n)^2-n^2\alpha)(2\sqrt{\alpha}-n)}\cos\left(\frac{m\pi}{2\sqrt{\alpha}}\right).
				\end{align}
				\item\label{prop:scalar:I:ii}
				If $m/n \in \mathbb{N}$ and $\alpha = m^2$, $\beta \neq m^2$, then 
				\begin{equation}
					\label{eq:scal:aeqm-mdivn-main}
					\langle f^n_{\alpha,\beta},\phi_m\rangle
					= \frac{\sqrt{\beta}\pi n}{4m^2}
					=
					\frac{\pi n^2 \sqrt{\alpha}}{4m^2 (2\sqrt{\alpha}-n)}
					=
					\frac{\pi n^2}{4m (2m-n)}.
				\end{equation}
				\item\label{prop:scalar:I:iii}
				If $m/n \not\in \mathbb{N}$, then
				$\langle f^n_{\alpha,\beta},\phi_m\rangle = 0$.
			\end{enumerate}	
			\item\label{prop:scalar:II}
			Let $\beta> n^2>\alpha$.  
			\begin{enumerate}[label={\rm(\roman*)}]
				\item\label{prop:scalar:II:i}
				If $m/n \in \mathbb{N}$ and $\alpha \neq m^2$, $\beta \neq m^2$, then 
				\begin{align}
					\notag
					\langle f^n_{\alpha,\beta},\phi_m\rangle
					&= \frac{(\beta-\alpha)\sqrt{\alpha}\,n}{(m^2-\alpha)(m^2-\beta)}\cos\left(\frac{ml_1}{2}\right) \\
					\label{eq:scal:mdivn10}
					&= (-1)^{m/n+1}\frac{4\beta^2n^2(n-\sqrt{\beta})}{(m^2-\beta)(m^2(2\sqrt{\beta}-n)^2-n^2\beta)(2\sqrt{\beta}-n)}\cos\left(\frac{m\pi}{2\sqrt{\beta}}\right). 
				\end{align}
				\item\label{prop:scalar:II:ii}
				If $m/n \in \mathbb{N}$ and $\alpha \neq m^2$, $\beta = m^2$, then 
				\begin{equation}
					\label{aeqm-mdivn-main2}
					\langle f^n_{\alpha,\beta},\phi_m\rangle
					= (-1)^{m+1}\frac{\sqrt{\alpha}\pi n}{4m^2}
					=
					(-1)^{m+1}
					\frac{\pi n^2 \sqrt{\beta}}{4m^2 (2\sqrt{\beta}-n)}
					=
					(-1)^{m+1}
					\frac{\pi n^2}{4m (2m-n)}.
				\end{equation}
				\item\label{prop:scalar:II:iii}
				If $m/n \not\in \mathbb{N}$, then
				$\langle f^n_{\alpha,\beta},\phi_m\rangle = 0$.
			\end{enumerate}	
		\end{enumerate}
	\end{proposition}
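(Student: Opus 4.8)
The plan is to evaluate each scalar product $\langle f^n_{\alpha,\beta},\phi_m\rangle=\int_0^\pi f^n_{\alpha,\beta}(x)\cos(mx)\,dx$ essentially by hand from the explicit piecewise formulas \eqref{eq:fucikpiecewise1}--\eqref{eq:fucikpiecewise2}, after first using the structure of $f^n_{\alpha,\beta}$ recorded in Remark~\ref{rem:sym} to collapse the range of integration. Since $f^n_{\alpha,\beta}$ is continuous, even about $x=0$, and $\tfrac{2\pi}{n}$-periodic, and since for $m/n\in\N$ the factor $\cos(mx)$ also has period $\tfrac{2\pi}{n}$, the integrand $f^n_{\alpha,\beta}(x)\cos(mx)$ is even and $\tfrac{2\pi}{n}$-periodic; because $[-\pi,\pi]$ consists of $n$ full periods, this gives $\langle f^n_{\alpha,\beta},\phi_m\rangle=\tfrac{n}{2}\int_{-\pi/n}^{\pi/n}f^n_{\alpha,\beta}(x)\cos(mx)\,dx$. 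On the single period $[-\tfrac{\pi}{n},\tfrac{\pi}{n}]$, formula \eqref{eq:fucikpiecewise1} displays $f^n_{\alpha,\beta}$ as one bump $\tfrac{\sqrt{\beta}}{\sqrt{\alpha}}\cos(\sqrt{\alpha}x)$ on $[-\tfrac{l_1}{2},\tfrac{l_1}{2}]$ flanked by two mirror-image half-bumps $-\cos(\sqrt{\beta}(x\mp\tfrac{\pi}{n}))$; the substitution $y=x\mp\tfrac{\pi}{n}$ in the latter, combined with folding by evenness, reduces everything to the elementary integrals $\int_0^{l_1/2}\cos(\sqrt{\alpha}x)\cos(mx)\,dx$ and $\int_0^{l_2/2}\cos(\sqrt{\beta}x)\cos(mx)\,dx$, plus $\int_0^{l_1/2}\cos^2(mx)\,dx$ in the degenerate case $\alpha=m^2$. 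Since $\tfrac{l_1}{2}=\tfrac{\pi}{2\sqrt{\alpha}}$ and $\tfrac{l_2}{2}=\tfrac{\pi}{2\sqrt{\beta}}$, product-to-sum formulas give $\int_0^{l_1/2}\cos(\sqrt{\alpha}x)\cos(mx)\,dx=\tfrac{\sqrt{\alpha}}{\alpha-m^2}\cos\tfrac{m\pi}{2\sqrt{\alpha}}$ (the two arguments $\tfrac{\pi}{2}\mp\tfrac{m\pi}{2\sqrt{\alpha}}$ both have sine $\cos\tfrac{m\pi}{2\sqrt{\alpha}}$), the analogous identity with $\beta$, and $\int_0^{l_1/2}\cos^2(mx)\,dx=\tfrac{l_1}{4}=\tfrac{\pi}{4m}$ (no secular term, as $ml_1=\pi$).

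I would dispatch the orthogonality statements \ref{prop:scalar:I:iii} and \ref{prop:scalar:II:iii} separately, from periodicity alone: since $f^n_{\alpha,\beta}$ is even about $x=0$, the integrand is even, so $\langle f^n_{\alpha,\beta},\phi_m\rangle=\tfrac12\int_{-\pi}^{\pi}f^n_{\alpha,\beta}(x)\cos(mx)\,dx$ is a fixed multiple of the $m$-th Fourier cosine coefficient of the $2\pi$-periodic function $f^n_{\alpha,\beta}$; but $f^n_{\alpha,\beta}$ carries the smaller period $\tfrac{2\pi}{n}$, so only the harmonics $\cos(knx)$, $k\ge 0$, appear, and this coefficient vanishes whenever $n\nmid m$.

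For the cases $m/n\in\N$, the concluding step is a short algebraic manipulation whose essential ingredient is the defining equation of $\Gamma_n$ rewritten as $\tfrac{1}{\sqrt{\alpha}}+\tfrac{1}{\sqrt{\beta}}=\tfrac{2}{n}$, that is, $\tfrac{m\pi}{2\sqrt{\alpha}}+\tfrac{m\pi}{2\sqrt{\beta}}=\tfrac{m}{n}\pi$. Since $m/n\in\N$, this forces $\cos\tfrac{m\pi}{2\sqrt{\alpha}}=(-1)^{m/n}\cos\tfrac{m\pi}{2\sqrt{\beta}}$, so the $\cos(\sqrt{\alpha}x)$- and $\cos(\sqrt{\beta}x)$-contributions to the period integral share a common trigonometric factor; combining the two rational coefficients through $\tfrac{1}{\alpha-m^2}-\tfrac{1}{\beta-m^2}=\tfrac{\beta-\alpha}{(m^2-\alpha)(m^2-\beta)}$ then yields precisely the first expression in \eqref{eq:scal:mdivn0}. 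In the degenerate case \ref{prop:scalar:I:ii}, where $\alpha=m^2$, the same relation forces $\cos\tfrac{m\pi}{2\sqrt{\beta}}=\cos\!\big(\tfrac{m}{n}\pi-\tfrac{\pi}{2}\big)=\sin\!\big(\tfrac{m}{n}\pi\big)=0$, so the $\sqrt{\beta}$-contribution vanishes and only $\tfrac{\pi n\sqrt{\beta}}{4m^2}$ remains, which is \eqref{eq:scal:aeqm-mdivn-main}. The remaining (purely rational) forms in \eqref{eq:scal:mdivn0} and \eqref{eq:scal:aeqm-mdivn-main} are obtained by substituting $\beta=\tfrac{n^2\alpha}{(2\sqrt{\alpha}-n)^2}$, i.e.\ $\sqrt{\beta}=\tfrac{n\sqrt{\alpha}}{2\sqrt{\alpha}-n}$, from \eqref{eq:ab}, which is legitimate since $\alpha\ge n^2$ makes $2\sqrt{\alpha}-n>0$, and then simplifying.

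Part~\ref{prop:scalar:II} (the regime $\beta>n^2>\alpha$) can be handled either by repeating the computation verbatim with the structurally mirror formula \eqref{eq:fucikpiecewise2}, or, more economically, by invoking the reflection identity $f^n_{\alpha,\beta}(x)=-f^n_{\beta,\alpha}\!\big(x-\tfrac{\pi}{n}\big)$ of Remark~\ref{rem:sym}: substituting $y=x-\tfrac{\pi}{n}$, returning the integration window to $(0,\pi)$ (legitimate for $m/n\in\N$, the integrand being $\tfrac{2\pi}{n}$-periodic and even), and using $\cos\!\big(m(y+\tfrac{\pi}{n})\big)=(-1)^{m/n}\cos(my)$, one obtains $\langle f^n_{\alpha,\beta},\phi_m\rangle=(-1)^{m/n+1}\langle f^n_{\beta,\alpha},\phi_m\rangle$, where now $(\beta,\alpha)\in\Gamma_n$ lies in the regime of part~\ref{prop:scalar:I}, so that \eqref{eq:scal:mdivn10} and \eqref{aeqm-mdivn-main2} follow from \eqref{eq:scal:mdivn0} and \eqref{eq:scal:aeqm-mdivn-main}. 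I do not expect a conceptual obstacle; the real work is bookkeeping — keeping the endpoints of the piecewise branches consistent on the reduced period, isolating the degenerate endpoints $\alpha=m^2$ and $\beta=m^2$ as separate subcases (and noting that $m=n$, $\alpha=\beta=n^2$ falls outside \ref{prop:scalar:I:ii}), tracking the parity prefactors carefully in part~\ref{prop:scalar:II}, and verifying that the trigonometric form and the rational form of each answer really do coincide after clearing denominators.
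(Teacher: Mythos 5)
Your argument is sound and takes a genuinely different, and in fact more economical, route than the paper. The paper integrates bump-by-bump over $(0,\pi)$, using the antiderivatives \eqref{eq:ap:3}--\eqref{eq:ap:5} together with the arithmetic-progression sums \eqref{eq:cos-sum}--\eqref{eq:cos-sum-2}, and treats even $n$, odd $n$, and the degenerate cases $\alpha=m^2$, $\beta=m^2$ in separate lengthy computations; part \ref{prop:scalar:II} is then only sketched via Remark~\ref{rem:sym}. You instead exploit evenness and $\tfrac{2\pi}{n}$-periodicity to collapse the integral to a single period when $n\mid m$, reducing everything to the two one-bump integrals $\int_0^{l_1/2}\cos(\sqrt{\alpha}x)\cos(mx)\,\mathrm{d}x$ and $\int_0^{l_2/2}\cos(\sqrt{\beta}x)\cos(mx)\,\mathrm{d}x$, and you dispatch \ref{prop:scalar:I:iii}, \ref{prop:scalar:II:iii} at once from the fact that a $\tfrac{2\pi}{n}$-periodic even function has its $2\pi$-Fourier cosine spectrum supported on multiples of $n$ --- this also covers the degenerate endpoint cases with no extra work, which the paper handles by hand. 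Two small points of care: the two-sided extension given by \eqref{eq:fucikpiecewise1}--\eqref{eq:fucikpiecewise2} is indeed even about $x=0$ (say this explicitly), and when you return the shifted window $(-\tfrac{\pi}{n},\pi-\tfrac{\pi}{n})$ to $(0,\pi)$ for odd $n$ the length $\pi$ is a half-integer number of periods, so periodicity alone does not suffice --- you need evenness of the integrand about the half-period lattice points $j\pi/n$ (which follows from evenness at $0$ plus periodicity); your parenthetical reason is the right one but deserves a line.

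There is, however, one place where your write-up does not match the statement as printed. Your reflection identity gives, in case \ref{prop:scalar:II:ii}, the value $(-1)^{m/n+1}\tfrac{\sqrt{\alpha}\pi n}{4m^2}$, while \eqref{aeqm-mdivn-main2} carries the prefactor $(-1)^{m+1}$; with $m=qn$ these coincide iff $n$ is odd or $q$ is even, and they differ when $n$ is even and $q$ is odd. A direct check at $n=2$, $m=6$, $\beta=36$, $\alpha=\tfrac{36}{25}$ (so $l_1=\tfrac{5\pi}{6}$, $l_2=\tfrac{\pi}{6}$) gives
\begin{equation*}
\langle f^2_{\alpha,\beta},\phi_6\rangle=\int_{5\pi/12}^{7\pi/12}\tfrac{1}{5}\cos^2(6x)\,\mathrm{d}x=\tfrac{\pi}{60}>0,
\end{equation*}
which agrees with your sign $(-1)^{m/n+1}$ and with the limit of \eqref{eq:scal:mdivn10} as $\beta\to m^2$, but not with the printed $(-1)^{m+1}$. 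So your computation appears to be the correct one and the printed sign in \eqref{aeqm-mdivn-main2} a misprint; but then your concluding claim that \eqref{aeqm-mdivn-main2} ``follows'' from \eqref{eq:scal:aeqm-mdivn-main} is not literally true in that subcase. You should not silently identify $(-1)^{m/n}$ with $(-1)^{m}$: state explicitly that your argument yields $(-1)^{m/n+1}$ and flag the discrepancy with the formula as stated.
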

	
	The proof of Proposition \ref{prop:scalar} is technical and presented in Appendix \ref{sec:appendix}. The proposition exhausts all nontrivial combinations of $m, n \in \mathbb{N}$ and $(\alpha,\beta) \in \Gamma_n$. 
	The only remaining case is $\alpha = \beta = m^2$,  which is possible if and only if $m=n$.
	In this case, we have  $f^n_{\alpha,\beta} = \phi_n$, and hence $\langle f^n_{\alpha,\beta},\phi_m\rangle = \frac{\pi}{2}$. 
	We want to explicitly mention that for any $n \in \mathbb{N}$,
	\begin{align}
		\label{eq:scalar-m0-1}
		\langle f^n_{\alpha,\beta},\phi_0\rangle 
		&=
		-\frac{2\sqrt{2}(\sqrt{\alpha}-n)}{2\sqrt{\alpha}-n}
		=
		\frac{2\sqrt{2}(\sqrt{\beta}-n)}{n}
		\quad \text{if}~ \alpha\geq n^2\geq\beta,\\
		\label{eq:scalar-m0-2}
		\langle f^n_{\alpha,\beta},\phi_0\rangle 
		&=
		-\frac{2\sqrt{2}(\sqrt{\alpha}-n)}{n}
		=
		\frac{2\sqrt{2}(\sqrt{\beta}-n)}{2\sqrt{\beta}-n}
		\quad\, \text{if}~ \beta> n^2>\alpha.
	\end{align}
	Moreover, $\langle f^0_{\alpha,\beta},\phi_0\rangle = \frac{\pi}{2}$ and $\langle f^0_{\alpha,\beta},\phi_m\rangle = 0$ for every $m \in \mathbb{N}$, thanks to the fact that $f^0_{\alpha,\beta} = \phi_0$.

	\begin{remark}\label{rem:nn}
		The expression for the scalar product $\langle f^n_{\alpha,\beta},\phi_n\rangle$ can be also derived by the representation 
		$$
		\langle f^n_{\alpha,\beta},\phi_n\rangle 
		= 
		\frac{1}{2}
		\left(
		\|f^n_{\alpha,\beta}\|^2+\left\|\phi_n\right\|^2
		-
		\|f^n_{\alpha,\beta}-\phi_n\|^2
		\right) ,
		$$
		where expressions for the norms $\|f^n_{\alpha,\beta}\|$ and the distances $\|f^n_{\alpha,\beta}-\phi_n\|$ are given in Section~\ref{sec:dist} below.
		Namely, we have
		\begin{align*}
		\langle f^n_{\alpha,\beta},\phi_n\rangle
			&= \frac{4\alpha^2}{(2\sqrt{\alpha}-n)(3\sqrt{\alpha}-n)(\sqrt{\alpha}+n)}\frac{\cos\left(\frac{n\pi}{2\sqrt{\alpha}}\right)}{\sqrt{\alpha}-n}
			\quad \text{if}~ \alpha\geq n^2\geq\beta,\\
		\langle f^n_{\alpha,\beta},\phi_n\rangle
		    &=	\frac{4\beta^2}{(2\sqrt{\beta}-n)(3\sqrt{\beta}-n)(\sqrt{\beta}+n)}\frac{\cos\left(\frac{n\pi}{2\sqrt{\beta}}\right)}{\sqrt{\beta}-n}
		    \quad \text{if}~ \beta > n^2 > \alpha.
		\end{align*}
		It is not hard to observe that $\langle f^n_{\alpha,\beta},\phi_n\rangle \neq 0$ for any $n \in \mathbb{N}$.
	\end{remark}

	Using Proposition \ref{prop:scalar}, we can establish the $\omega$-linear independence of arbitrary Fu\v{c}\'ik systems stated in Proposition \ref{thm:omega}.
	\begin{proof}[Proof of Proposition \ref{thm:omega}]
		Let $F_{\alpha,\beta}=\{f^n_{\alpha,\beta}\}_{n\in\mathbb{N}_0}$ be a Fu\v{c}\'ik system.
		We need to prove that if a sequence $\{\eta_n\}_{n\in\mathbb{N}_0} \subset \mathbb{R}$ yields the strong convergence 
		\begin{equation}\label{eq:converg0}
			\lim_{k \to \infty} 
			\Big\|
			\sum_{n=0}^{k}\eta_n f^n_{\alpha,\beta}
			\Big\|
			=0,
		\end{equation}
		then we have $\eta_n=0$ for every $n\in\mathbb{N}_0$.
		
		For any fixed $m \in \mathbb{N}_0$, we consider the sum $\sum_{n=0}^{k} \eta_n \langle f^n_{\alpha,\beta}, \phi_m \rangle$ and apply the Cauchy inequality to obtain
		\begin{equation}\label{eq:converg}
			\Big| 
			\sum_{n=0}^{k} \eta_n \langle f^n_{\alpha,\beta}, \phi_m \rangle 
			\Big|
			=
			\Big| 
			\int_0^\pi \cos (mx) \sum_{n=0}^{k}\eta_n f^n_{\alpha,\beta}(x) \, \mathrm{d}x
			\Big|
			\leq
			\sqrt{\frac{\pi}{2}} \,
			\Big\|
			\sum_{n=0}^{k}\eta_n f^n_{\alpha,\beta}
			\Big\|
			.
		\end{equation}
		Thanks to \eqref{eq:converg0}, the estimate \eqref{eq:converg} implies
		\begin{equation}\label{eq:converg1}
		    \lim_{k\to\infty}
			\Big| 
			\sum_{n=0}^{k} \eta_n \langle f^n_{\alpha,\beta}, \phi_m \rangle 
			\Big|
			=0.
		\end{equation}
		By Proposition \ref{prop:scalar} - \ref{prop:scalar:I} \ref{prop:scalar:I:iii} and \ref{prop:scalar:II} \ref{prop:scalar:II:iii} -  we have $\langle f^n_{\alpha,\beta}, \phi_m \rangle = 0$ for every $n > m \geq 1$, and $\langle f^0_{\alpha,\beta}, \phi_m \rangle = 0$ for every $m\in\mathbb{N}$ since $f^0_{\alpha,\beta}=\phi_0$. Furthermore, the inequality	$\langle f^m_{\alpha,\beta}, \phi_m \rangle \neq 0$ holds for every $m\in\mathbb{N}$, see Remark \ref{rem:nn}.
		
		We start by setting $m=1$ in \eqref{eq:converg1} and deduce that $\eta_1=0$.
		Next, setting $m=2$, we deduce that $\eta_2=0$, and so on. Arguing by induction, we conclude that $\eta_n=0$ for every $n\in\mathbb{N}$. Finally, the equality $\eta_0=0$ follows directly from \eqref{eq:converg0}. This completes the proof of the $\omega$-linear independence of $F_{\alpha,\beta}$.
	\end{proof}

	\section{Distances and Proof of Theorem \ref{thm:main1}}\label{sec:dist}
	
In this section, we derive expressions for the distances $\|f^n_{\alpha,\beta}-\phi_n\|^2$ and use them to prove Theorem \ref{thm:main1}. We also provide some formulas for the norms of Fu\v{c}\'ik eigenfunctions.
	\begin{lemma}\label{lem:distances}
		Let $n \in \mathbb{N}$ and $(\alpha,\beta) \in \Gamma_n$.
		If $\alpha\geq n^2\geq\beta$, then
		\begin{equation}\label{eq:norm}
			\|f^n_{\alpha,\beta}-\phi_n\|^2 = \pi-\pi\frac{n(\sqrt{\alpha}-n)}{(2\sqrt{\alpha}-n)^2} - \frac{8\alpha^2}{(2\sqrt{\alpha}-n)(3\sqrt{\alpha}-n)(\sqrt{\alpha}+n)}\frac{\cos\left(\frac{n\pi}{2\sqrt{\alpha}}\right)}{\sqrt{\alpha}-n}.
		\end{equation}
		For $\beta > n^2 > \alpha$, we have
		\begin{equation}\label{eq:norm2}
			\|f^n_{\alpha,\beta}-\phi_n\|^2 = \pi-\pi\frac{n(\sqrt{\beta}-n)}{(2\sqrt{\beta}-n)^2} - \frac{8\beta^2}{(2\sqrt{\beta}-n)(3\sqrt{\beta}-n)(\sqrt{\beta}+n)}\frac{\cos\left(\frac{n\pi}{2\sqrt{\beta}}\right)}{\sqrt{\beta}-n}.
		\end{equation}	
	\end{lemma}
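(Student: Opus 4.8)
The plan is to expand
\[
\|f^n_{\alpha,\beta}-\phi_n\|^2 = \|f^n_{\alpha,\beta}\|^2+\|\phi_n\|^2-2\langle f^n_{\alpha,\beta},\phi_n\rangle,
\]
use $\|\phi_n\|^2=\frac{\pi}{2}$, and compute the two nontrivial ingredients separately. The case $\alpha=\beta=n^2$ is trivial since then $f^n_{\alpha,\beta}=\phi_n$, so for \eqref{eq:norm} one may assume $\alpha>n^2>\beta$. For the norm, I would use that $f^n_{\alpha,\beta}$ is $\frac{2\pi}{n}$-periodic (Remark~\ref{rem:sym}) and decompose $[0,\pi]$ according to \eqref{eq:fucikpiecewise1}: it carries $\frac{n}{2}$ positive bumps and $\frac{n}{2}$ negative bumps, the two half-bumps abutting the endpoints $0$ and $\pi$ pairing up into one full positive bump. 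A positive bump (length $\pi/\sqrt{\alpha}$, amplitude $\sqrt{\beta}/\sqrt{\alpha}$) contributes $\frac{\beta\pi}{2\alpha^{3/2}}$ to $\|f^n_{\alpha,\beta}\|^2$ and a negative bump (length $\pi/\sqrt{\beta}$, amplitude $1$) contributes $\frac{\pi}{2\sqrt{\beta}}$, so $\|f^n_{\alpha,\beta}\|^2=\frac{n}{2}\big(\frac{\beta\pi}{2\alpha^{3/2}}+\frac{\pi}{2\sqrt{\beta}}\big)$, which — after inserting $\sqrt{\beta}=\frac{n\sqrt{\alpha}}{2\sqrt{\alpha}-n}$ and $\beta=\frac{n^2\alpha}{(2\sqrt{\alpha}-n)^2}$ from \eqref{eq:ab} and using $n^3+(2\sqrt{\alpha}-n)^3=2\sqrt{\alpha}\big((2\sqrt{\alpha}-n)^2+2n(n-\sqrt{\alpha})\big)$ — collapses to $\|f^n_{\alpha,\beta}\|^2=\frac{\pi}{2}-\frac{\pi n(\sqrt{\alpha}-n)}{(2\sqrt{\alpha}-n)^2}$, the first two terms of \eqref{eq:norm}.

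For the scalar product I would invoke Proposition~\ref{prop:scalar}\ref{prop:scalar:I}\ref{prop:scalar:I:i} with $m=n$ (alternatively, compute it directly from $\int_{-\pi/(2\sqrt{\alpha})}^{\pi/(2\sqrt{\alpha})}\cos(\sqrt{\alpha}\,t)\cos(nt)\,\mathrm{d}t=\frac{2\sqrt{\alpha}}{\alpha-n^2}\cos\big(\frac{n\pi}{2\sqrt{\alpha}}\big)$ summed over the bumps, using that the centres of the positive, resp.\ negative, bumps are extrema of $\cos(nx)$ with value $+1$, resp.\ $-1$), obtaining $\langle f^n_{\alpha,\beta},\phi_n\rangle=-\frac{(\beta-\alpha)\sqrt{\beta}\,n}{(n^2-\alpha)(n^2-\beta)}\cos\big(\frac{n\pi}{2\sqrt{\beta}}\big)$. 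The defining relation $\frac{n}{2\sqrt{\alpha}}+\frac{n}{2\sqrt{\beta}}=1$ of $\Gamma_n$ gives $\cos\big(\frac{n\pi}{2\sqrt{\beta}}\big)=-\cos\big(\frac{n\pi}{2\sqrt{\alpha}}\big)$; writing $\frac{\beta-\alpha}{(n^2-\alpha)(n^2-\beta)}=\frac{1}{\alpha-n^2}-\frac{1}{\beta-n^2}$, substituting $\sqrt{\beta},\beta$ from \eqref{eq:ab}, and using $(2\sqrt{\alpha}-n)^2-\alpha=(3\sqrt{\alpha}-n)(\sqrt{\alpha}-n)$, the numerator telescopes and one gets $\langle f^n_{\alpha,\beta},\phi_n\rangle=\frac{4\alpha^2}{(2\sqrt{\alpha}-n)(3\sqrt{\alpha}-n)(\sqrt{\alpha}+n)}\cdot\frac{\cos(n\pi/(2\sqrt{\alpha}))}{\sqrt{\alpha}-n}$. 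Substituting both expressions into the expansion yields \eqref{eq:norm}; the boundary case $\alpha=n^2$ then follows by continuity, both sides vanishing there.

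For \eqref{eq:norm2}, i.e.\ $\beta>n^2>\alpha$, no further computation is needed. By Remark~\ref{rem:sym} we have $f^n_{\alpha,\beta}(x)=-f^n_{\beta,\alpha}\big(x-\frac{\pi}{n}\big)$, and since $\phi_n(x)=-\phi_n\big(x-\frac{\pi}{n}\big)$ it follows that $(f^n_{\alpha,\beta}-\phi_n)(x)=-(f^n_{\beta,\alpha}-\phi_n)\big(x-\frac{\pi}{n}\big)$; as both functions are $\frac{2\pi}{n}$-periodic this yields $\|f^n_{\alpha,\beta}-\phi_n\|=\|f^n_{\beta,\alpha}-\phi_n\|$ (for odd $n$ one argues identically using instead the reflection $f^n_{\alpha,\beta}(x)=-f^n_{\beta,\alpha}(\pi-x)$ together with $\phi_n(x)=-\phi_n(\pi-x)$). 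Since the pair $(\beta,\alpha)$ satisfies $\beta>n^2>\alpha$, formula \eqref{eq:norm} applies to $\|f^n_{\beta,\alpha}-\phi_n\|^2$ with the roles of $\alpha$ and $\beta$ interchanged, which is exactly \eqref{eq:norm2}.

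I expect the main obstacle to be the algebraic simplification of the two displays for the norm and the scalar product: the raw expressions involve both $\cos(n\pi/(2\sqrt{\alpha}))$ and $\cos(n\pi/(2\sqrt{\beta}))$ and denominators mixing $\alpha$ and $\beta$, and it is the $\Gamma_n$-constraint — through $\cos(n\pi/(2\sqrt{\beta}))=-\cos(n\pi/(2\sqrt{\alpha}))$ and the factorizations $(2\sqrt{\alpha}-n)^2-\alpha=(3\sqrt{\alpha}-n)(\sqrt{\alpha}-n)$ and $n^3+(2\sqrt{\alpha}-n)^3=2\sqrt{\alpha}(3n^2-6n\sqrt{\alpha}+4\alpha)$ — that makes everything collapse to the stated closed form. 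The only other delicate point, namely that the count of positive and negative bumps over $[0,\pi]$ depends on the parity of $n$, is handled uniformly by the periodicity and reflection symmetries recorded in Remark~\ref{rem:sym}.
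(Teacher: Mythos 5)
Your proposal is correct, but it follows a different route from the paper. The paper proves the lemma by a single self-contained computation: it integrates $(f^n_{\alpha,\beta}-\phi_n)^2$ directly over one down-slope $[0,\pi/n]$ using the antiderivative formulas \eqref{eq:ap:3} and \eqref{eq:ap:5}, and then sums over all slopes via the $\tfrac{2\pi}{n}$-periodicity and the symmetry about the extrema from Remark~\ref{rem:sym}; the case $\beta>n^2>\alpha$ is likewise dispatched by Remark~\ref{rem:sym}. You instead polarize, $\|f^n_{\alpha,\beta}-\phi_n\|^2=\|f^n_{\alpha,\beta}\|^2+\tfrac{\pi}{2}-2\langle f^n_{\alpha,\beta},\phi_n\rangle$, compute the norm by a bump-by-bump decomposition (which in passing proves the paper's norm formula, stated there without proof as a separate lemma), and import the scalar product from Proposition~\ref{prop:scalar}\,\ref{prop:scalar:I}\,\ref{prop:scalar:I:i} with $m=n$. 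This is legitimate and non-circular, since Proposition~\ref{prop:scalar} is proved independently in Appendix~\ref{sec:appendix}; interestingly, it reverses the logical direction of Remark~\ref{rem:nn}, where the paper recovers $\langle f^n_{\alpha,\beta},\phi_n\rangle$ \emph{from} the distance and norm. Your approach buys brevity by reusing existing machinery and isolates the algebra in the two closed-form ingredients, whereas the paper's slope-wise integration keeps Section~\ref{sec:dist} self-contained and produces the per-slope integral explicitly. Your algebraic simplifications and the treatment of $\beta>n^2>\alpha$ (translation for even $n$, reflection for odd $n$) check out; the only slip is the remark that the two boundary half-bumps ``pair up into one full positive bump'', which is true only for even $n$ — for odd $n$ the half-bump at $\pi$ is negative — but since each half-bump contributes exactly half of a full bump by symmetry about its center, the count $\tfrac{n}{2}$ of positive and negative bump-equivalents, and hence your norm formula, is unaffected.
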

	\begin{proof}	
		Let us consider the case $\alpha\geq n^2\geq\beta$. Since $f^n_{\alpha,\beta}$ equals $\phi_n$ for $\alpha = n^2 = \beta$, we assume $\alpha>n^2>\beta$. We begin by deriving the integral $\int_0^{\pi/n} (f^n_{\alpha,\beta}-\phi_n)^2 \,\mathrm{d}x$, where the interval of integration represents a single down-slope of $f^n_{\alpha,\beta}$, i.e., the interval between a maximum and a consecutive minimum. 
		Using the pointwise definition \eqref{eq:fucikpiecewise1} and the formulas \eqref{eq:ap:3} and \eqref{eq:ap:5}, we obtain 
		\begin{align*}
			\int_0^{\pi/n} &(f^n_{\alpha,\beta}-\phi_n)^2 \,\mathrm{d}x 
			\\
			&= \int_0^{l_1/2}\left(\frac{\sqrt{\beta}}{\sqrt{\alpha}}\cos(\sqrt{\alpha}x)-\cos(nx)\right)^2	\mathrm{d}x 
			+ \int_{l_1/2}^{\pi/n}\left(\cos\left(\sqrt{\beta}\left(x-\frac{\pi}{n}\right)\right)+\cos(nx)\right)^2	\mathrm{d}x \\
			&= \frac{\beta}{\alpha}\frac12\left(x+\frac1{\sqrt{\alpha}}\sin(\sqrt{\alpha}x)\cos(\sqrt{\delta}x)\right) \bigg|_0^{l_1/2} \\
			&- 2\frac{\sqrt{\beta}}{\sqrt{\alpha}}\left(\frac{n}{n^2-\alpha}\cos(\sqrt{\alpha}x)\sin(nx)
			- \frac{\sqrt{\alpha}}{n^2-\alpha}\sin(\sqrt{\alpha}x)\cos(nx)\right) \bigg|_0^{l_1/2} \\
			&+ \frac12\left(x+\frac1{n}\sin(nx)\cos(nx)\right) \bigg|_0^{l_1/2} \\
			&+ \frac12\left(x-\frac{\pi}{n}+\frac1{\sqrt{\beta}}\sin\left(\sqrt{\beta}\left(x-\frac{\pi}{n}\right)\right)\cos\left(\sqrt{\beta}\left(x-\frac{\pi}{n}\right)\right)\right) \bigg|_{l_1/2}^{\pi/n} \\
			&+ 2 \left(\frac{n}{n^2-\beta}\cos\left(\sqrt{\beta}\left(x-\frac{\pi}{n}\right)\right)\sin(nx)
			- \frac{\sqrt{\beta}}{n^2-\beta}\sin\left(\sqrt{\beta}\left(x-\frac{\pi}{n}\right)\right)\cos(nx)\right) \bigg|_{l_1/2}^{\pi/n} \\
			&+ \frac12\left(x+\frac1{n}\sin(nx)\cos(nx)\right) \bigg|_{l_1/2}^{\pi/n}.
		\end{align*}
		Directly rearranging the resulting terms, we arrive at	
	    \begin{align*}
	        \int_0^{\pi/n} (f^n_{\alpha,\beta}-\phi_n)^2 \,\mathrm{d}x
			&= \frac{\pi}{n}+\frac12\left(\frac{\beta}{\alpha}-1\right)\frac{l_1}{2}+2\sqrt{\beta}\left(\frac1{n^2-\alpha}-\frac1{n^2-\beta}\right)\cos\left(n\frac{l_1}2\right) \\
			&= \frac{\pi}{n}-\pi\frac{\sqrt{\alpha}-n}{(2\sqrt{\alpha}-n)^2} - \frac{8\alpha^2}{n(\sqrt{\alpha}+n)(2\sqrt{\alpha}-n)(3\sqrt{\alpha}-n)}\frac{\cos\left(\frac{n\pi}{2\sqrt{\alpha}}\right)}{\sqrt{\alpha}-n}.
		\end{align*}
		Since both the Fu\v{c}\'ik eigenfunction $f^n_{\alpha,\beta}$ considered on the whole $\mathbb{R}$ and the cosine function $\phi_n$ are symmetric with respect to each extrema at $x=\frac{k\pi}{n}$, see Remark \ref{rem:sym}, we get the same formula for the integral over a single up-slope $\int_{\pi/n}^{2\pi/n} (f^n_{\alpha,\beta}-\phi_n)^2 \,\mathrm{d}x$, i.e, the interval between a minimum and a consecutive maximum. Thanks to the $\frac{2\pi}{n}$-periodicity of $f^n_{\alpha,\beta}-\phi_n$, we sum up the integrals over all subintervals $\left[\frac{k\pi}{n},\frac{(k+1)\pi}{n}\right]$, $k=0,\ldots,n-1$, to obtain the formula \eqref{eq:norm} for every $n\in\N$. 
		For the case $\beta>n^2>\alpha$, we get the similar formula \eqref{eq:norm2} due to the symmetry properties given in Remark~\ref{rem:sym}. 
	\end{proof}

	We can make use of Lemma \ref{lem:distances} to prove Theorem \ref{thm:main1}.
	\begin{proof}[Proof of Theorem \ref{thm:main1}]
		We start by providing an upper bound on $\|f^n_{\alpha,\beta}-\phi_n\|^2$ for the case $\alpha\geq n^2\geq\beta$. 
		We use the lower bound
		$$
		\cos(x) 
		=
		\sin\left(\frac{\pi}{2}-x\right)
		\geq \frac{1}{6}\left(\frac{\pi}{2}-x\right)\left(\sqrt{6}-\frac{\pi}{2}+x\right)\left(\sqrt{6}+\frac{\pi}{2}-x\right),
		\quad x \leq \frac{\pi}{2},
		$$
		which in our particular case reads as
		$$
		\cos\left(\frac{\pi}2\frac{n}{\sqrt{\alpha}}\right) 
		\geq
		\frac{\pi(\sqrt{\alpha}-n)}{48\sqrt{\alpha^3}}((2\sqrt{6}-\pi)\sqrt{\alpha}+\pi n)((2\sqrt{6}+\pi)\sqrt{\alpha}-\pi n).
		$$
		We obtain from \eqref{eq:norm} that 
		\begin{align}
			\notag
			\|f^n_{\alpha,\beta}-\phi_n\|^2
			&\leq \pi-\pi\frac{n(\sqrt{\alpha}-n)}{(2\sqrt{\alpha}-n)^2} - \pi\frac{\sqrt{\alpha}((2\sqrt{6}-\pi)\sqrt{\alpha}+\pi n)((2\sqrt{6}+\pi)\sqrt{\alpha}-\pi n)}{6(\sqrt{\alpha}+n)(2\sqrt{\alpha}-n)(3\sqrt{\alpha}-n)} \\
			\notag
			&=
			\pi \frac{2(12+\pi^2)\alpha - (30-\pi^2)\sqrt{\alpha} n - 12n^2}{6(\sqrt{\alpha}+n)(2\sqrt{\alpha}-n)^2(3\sqrt{\alpha}-n)} (\sqrt{\alpha}-n)^2\\
			\notag
			&\leq
			\pi \frac{2(12+\pi^2)\alpha - (42-\pi^2)n^2}{6(\sqrt{\alpha}+n)(3\sqrt{\alpha}-n)} \frac{(\sqrt{\alpha}-n)^2}{(2\sqrt{\alpha}-n)^2} \\
			\notag
			&=
			\pi \frac{(12+\pi^2)\left(\sqrt{\alpha} + \sqrt{\frac{42-\pi^2}{24+2\pi^2}}n\right)\left(3\sqrt{\alpha} -3\sqrt{\frac{42-\pi^2}{24+2\pi^2}}n\right)}
			{9(\sqrt{\alpha}+n)(3\sqrt{\alpha}-n)} \frac{(\sqrt{\alpha}-n)^2}{(2\sqrt{\alpha}-n)^2}\\
			\label{eq:norm:upper}
			&\leq
			\frac{(12+\pi^2)\pi}{9} \frac{(\sqrt{\alpha}-n)^2}{(2\sqrt{\alpha}-n)^2}
			\leq
			\frac{(12+\pi^2)\pi}{9}
			\frac{(\sqrt{\alpha}-n)^2}{n^2}.
		\end{align}
		In the case $\beta>n^2>\alpha$, we start with \eqref{eq:norm2} and get the same estimate with $\beta$ instead of $\alpha$.
		
		Using the obtained upper bounds and the assumption \eqref{eq:thm:main1}, we see that
		$$
		\sum_{n=1}^\infty 
		\|f^n_{\alpha,\beta}-\phi_n\|^2
		\leq 
		\frac{(12+\pi^2)\pi}{9}
		\sum_{n=1}^\infty 
		\frac{(\max(\sqrt{\alpha(n)},\sqrt{\beta(n)})-n)^2}{n^2}
		<\infty.
		$$
		Thus, the Fu\v{c}\'ik system $F_{\alpha,\beta}$ is quadratically near to the complete orthogonal system of cosine functions $\{\phi_n\}_{n\in\mathbb{N}_0}$.
		Therefore, the rescaled system $\sqrt{2/\pi} \, F_{\alpha,\beta}$ is quadratically near to the complete \textit{orthonormal} system $\{\sqrt{2/\pi} \, \phi_n\}_{n\in\mathbb{N}_0}$. 
		Together with the $\omega$-linear independence of $\sqrt{2/\pi} \, F_{\alpha,\beta}$ established in Proposition \ref{thm:omega}, this implies that $\sqrt{2/\pi} \, F_{\alpha,\beta}$ is a Riesz basis in $L^2(0,\pi)$, see, e.g.,  \cite[Theorem V-2.20]{kato}. Hence, $F_{\alpha,\beta}$ is also a Riesz basis in $L^2(0,\pi)$.
	\end{proof}
	
	\begin{remark}\label{rem:conv}
	    We notice that the assumption \eqref{eq:thm:main1} is, in fact, equivalent to the weaker assumption
	    \begin{equation}
	    \label{eq:assu}
	    \sum_{n=1}^\infty 
		\|f^n_{\alpha,\beta}-\phi_n\|^2<\infty
	    \end{equation}
	    which is required in the application of \cite[Theorem V-2.20]{kato}. Indeed, the distance $\|f^n_{\alpha,\beta}-\phi_n\|^2$ can be estimated from below as follows.
	    For $\alpha\geq n^2\geq\beta$, we get
		$$
		\cos\left(\frac{n\pi}{2\sqrt{\alpha}}\right) 
		\leq
		\frac{\pi}2-\frac{n\pi}{2\sqrt{\alpha}}=\frac{\pi}{2}\frac{\sqrt{\alpha}-n}{\sqrt{\alpha}},
		$$
		and hence
		\begin{align}
		    \notag
			\|f^n_{\alpha,\beta}-\phi_n\|^2 
			&\geq \pi-\pi\frac{n(\sqrt{\alpha}-n)}{(2\sqrt{\alpha}-n)^2} - \pi\frac{4\sqrt{\alpha}^3}{(\sqrt{\alpha}+n)(2\sqrt{\alpha}-n)(3\sqrt{\alpha}-n)} \\
			\label{eq:es_lb}
			&= \pi\frac{(4\alpha+5\sqrt{\alpha}n-2n^2)(\sqrt{\alpha}-n)^2}{(\sqrt{\alpha}+n)(2\sqrt{\alpha}-n)^2(3\sqrt{\alpha}-n)}.
		\end{align}
		Comparing \eqref{eq:es_lb} with the first term in \eqref{eq:norm:upper}, we get
		$$
		\frac{(12+\pi^2)\pi}{9} \frac{(\sqrt{\alpha}-n)^2}{(2\sqrt{\alpha}-n)^2}
		\leq
		C \cdot
		\pi\frac{(4\alpha+5\sqrt{\alpha}n-2n^2)}{(3\sqrt{\alpha}-n)(\sqrt{\alpha}+n)}\frac{(\sqrt{\alpha}-n)^2}{(2\sqrt{\alpha}-n)^2}, \quad n\in\mathbb{N},
		$$
		where $C=\dfrac{12+\pi^2}{12}$ is a uniform constant. For the case $\beta>n^2>\alpha$, we can easily derive a lower bound similar to \eqref{eq:es_lb} and obtain an estimate with the same constant $C$. 
		On the other hand, it is not hard to observe that
	    $$
		\sum_{n=1}^\infty 
    	\frac{(\max(\sqrt{\alpha(n)},\sqrt{\beta(n)})-n)^2}{(2\max(\sqrt{\alpha(n)},\sqrt{\beta(n)})-n)^2}
		<\infty
		$$
		if and only if
		$$
		\sum_{n=1}^\infty
		\frac{(\max(\sqrt{\alpha(n)},\sqrt{\beta(n)})-n)^2}{n^2} < \infty.
    	$$
    	This shows the equivalence of the assumptions \eqref{eq:thm:main1} and \eqref{eq:assu}.
	\end{remark}

	We conclude this section by providing expressions for the norms of normalized Fu\v{c}\'ik eigenfunctions which were used in Remark~\ref{rem:nn}. 
	\begin{lemma}
		Let $n \in \mathbb{N}$ and $(\alpha,\beta) \in \Gamma_n$.
		If $\alpha\geq n^2\geq\beta$, then 
		$$
		\|f^n_{\alpha,\beta}\|^2
		=
		\frac{\pi}{2} - \frac{\pi n(\sqrt{\alpha}-n)}{(2\sqrt{\alpha}-n)^2}.
		$$
		For $\beta > n^2 > \alpha$, we have
		$$
		\|f^n_{\alpha,\beta}\|^2 = \frac{\pi}{2} - \frac{\pi n(\sqrt{\beta}-n)}{(2\sqrt{\beta}-n)^2}.
		$$
	\end{lemma}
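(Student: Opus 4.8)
The plan is to mimic the proof of Lemma \ref{lem:distances}, replacing the integrand $(f^n_{\alpha,\beta}-\phi_n)^2$ by the much simpler $(f^n_{\alpha,\beta})^2$, which contains only squares of cosines and no cross terms. Consider first the case $\alpha\geq n^2\geq\beta$; when $\alpha=n^2=\beta$ we have $f^n_{\alpha,\beta}=\phi_n$ and $\|\phi_n\|^2=\tfrac{\pi}{2}$, which already agrees with the claimed formula, so assume $\alpha>n^2>\beta$. On the down-slope $[0,\pi/n]=[0,l_1/2]\cup[l_1/2,\pi/n]$ the piecewise definition \eqref{eq:fucikpiecewise1} gives
\[
\int_0^{\pi/n} (f^n_{\alpha,\beta})^2 \,\mathrm{d}x
=
\int_0^{l_1/2} \frac{\beta}{\alpha}\cos^2\!\left(\sqrt{\alpha}\,x\right)\mathrm{d}x
+
\int_{l_1/2}^{\pi/n} \cos^2\!\left(\sqrt{\beta}\left(x-\tfrac{\pi}{n}\right)\right)\mathrm{d}x .
\]
Using $\int\cos^2(ax)\,\mathrm{d}x=\tfrac{1}{2}\bigl(x+\tfrac{1}{a}\sin(ax)\cos(ax)\bigr)$ and the identities $\sqrt{\alpha}\,l_1=\pi$, $\sqrt{\beta}\,l_2=\pi$, all the sine boundary terms vanish, and the integral collapses to $\tfrac{\beta}{\alpha}\cdot\tfrac{l_1}{4}+\tfrac{l_2}{4}=\tfrac{\pi\beta}{4\alpha^{3/2}}+\tfrac{\pi}{4\sqrt{\beta}}$.

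Next, by the symmetry of $f^n_{\alpha,\beta}$ (extended to $\mathbb{R}$) about each extremum $x=\tfrac{k\pi}{n}$ together with its $\tfrac{2\pi}{n}$-periodicity, recalled in Remark \ref{rem:sym}, the integral of $(f^n_{\alpha,\beta})^2$ over each of the $n$ subintervals $[\tfrac{k\pi}{n},\tfrac{(k+1)\pi}{n}]$, $k=0,\dots,n-1$, equals this same value, so that $\|f^n_{\alpha,\beta}\|^2=n\bigl(\tfrac{\pi\beta}{4\alpha^{3/2}}+\tfrac{\pi}{4\sqrt{\beta}}\bigr)$. It then remains to substitute $\beta=\tfrac{n^2\alpha}{(2\sqrt{\alpha}-n)^2}$, equivalently $\sqrt{\beta}=\tfrac{n\sqrt{\alpha}}{2\sqrt{\alpha}-n}$, valid on $\Gamma_n$ by \eqref{eq:ab}, and to simplify. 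Bringing the two fractions to the common denominator $4\sqrt{\alpha}\,n\,(2\sqrt{\alpha}-n)^2$ and multiplying by $n$ gives $\|f^n_{\alpha,\beta}\|^2=\tfrac{\pi\bigl((2\sqrt{\alpha}-n)^3+n^3\bigr)}{4\sqrt{\alpha}\,(2\sqrt{\alpha}-n)^2}$; the factorization $(2\sqrt{\alpha}-n)^3+n^3=2\sqrt{\alpha}\,(4\alpha-6\sqrt{\alpha}\,n+3n^2)$ followed by the elementary identity $(2\sqrt{\alpha}-n)^2-2n(\sqrt{\alpha}-n)=4\alpha-6\sqrt{\alpha}\,n+3n^2$ turns this into exactly $\tfrac{\pi}{2}-\tfrac{\pi n(\sqrt{\alpha}-n)}{(2\sqrt{\alpha}-n)^2}$.

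For the remaining case $\beta>n^2>\alpha$, rather than repeat the computation with \eqref{eq:fucikpiecewise2}, I would invoke the symmetry relation $f^n_{\alpha,\beta}(x)=-f^n_{\beta,\alpha}\bigl(x-\tfrac{\pi}{n}\bigr)$ from Remark \ref{rem:sym}: the $L^2(0,\pi)$ norm is invariant under a sign flip and, by $\tfrac{2\pi}{n}$-periodicity (again splitting $(0,\pi)$ into the $n$ half-period subintervals), under the shift by $\tfrac{\pi}{n}$, so $\|f^n_{\alpha,\beta}\|=\|f^n_{\beta,\alpha}\|$; since $(\beta,\alpha)\in\Gamma_n$ with $\beta\geq n^2\geq\alpha$ falls under the case already treated, one obtains the asserted formula with $\beta$ in place of $\alpha$. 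I do not expect any genuine obstacle: the argument is a routine integration, and the only points requiring mild care are verifying that all sine boundary terms vanish and carrying out the algebraic simplification on $\Gamma_n$.
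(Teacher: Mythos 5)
Your proposal is correct and follows essentially the approach the paper intends: the lemma is stated without an explicit proof, but it is the direct analogue of the computation in Lemma \ref{lem:distances} (integrate $(f^n_{\alpha,\beta})^2$ over a single slope $[0,\pi/n]$ using \eqref{eq:fucikpiecewise1} and \eqref{eq:ap:3}, multiply by $n$ via the symmetry and periodicity of Remark \ref{rem:sym}, and simplify on $\Gamma_n$ with \eqref{eq:ab}), and your algebra checks out, including the reduction of the case $\beta>n^2>\alpha$ by symmetry.
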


	\section{Proof of Theorem \ref{thm:main2}}\label{sec:proof}
    Let $\alpha$ and $\beta$ be the mappings of an arbitrary Fu\v{c}\'ik system $F_{\alpha,\beta}$.	
	Let us consider projections of Fu\v{c}\'ik eigenfunctions into the Hilbert space
	$$
	L^2_\perp(0,\pi)
	=
	\{
	u \in L^2(0,\pi):~ \langle u,\phi_0\rangle=0
	\},
	$$
	which is the closed linear subspace of $L^2(0,\pi)$ consisting of functions with zero mean. A complete orthogonal system in $L^2_\perp(0,\pi)$ is given by $\{\phi_n\}_{n \in \mathbb{N}}$. We define the projections of the elements of $F_{\alpha,\beta}$ into $L^2_\perp(0,\pi)$ by
	\begin{equation}\label{eq:gn}
		g_n(x) 
		:=
		f^n_{\alpha,\beta}(x) 
		- 
		\frac{\sqrt{2}}{\pi}
		\langle f^n_{\alpha,\beta},\phi_0\rangle, 
		\quad 
		n \in \mathbb{N}.
	\end{equation}
	Obviously, we have $\langle g_n,\phi_0\rangle=0$ for every $n \in \mathbb{N}$. 
	
	We start by proving that $\{g_n\}_{n \in \mathbb{N}}$ is a Riesz basis in $L^2_\perp(0,\pi)$. 
    For that purpose, we show that there exist constants $C_{n,k}$, $n,k \in \mathbb{N}$, with bounds $|C_{n,k}| \leq c_k$ satisfying $\sum_{k=1}^{\infty} c_k < 1$, and linear isometries $T_k$ of $L^2_\perp(0,\pi)$, $k \in \mathbb{N}$, such that the following representation holds:
	\begin{equation}\label{eq:duffin-g}
		g_n(x) = \cos(nx) + \sum_{k=1}^{\infty}C_{n,k} T_k \cos(nx),
		\quad n \in \mathbb{N}.
	\end{equation}
	Then, \cite[Theorem D]{duff} implies that $\{\sqrt{2/\pi} \, g_n\}_{n \in \mathbb{N}}$ shares the Riesz basis property in $L^2_\perp(0,\pi)$ with the complete orthonormal system $\{\sqrt{2/\pi} \, \phi_n\}_{n \in \mathbb{N}}$. See also \cite[Section 10]{young}.
	Clearly, $\{g_n\}_{n \in \mathbb{N}}$ is a Riesz basis in  $L^2_\perp(0,\pi)$, as well.
	
	Let us construct the linear isometries $T_k:L^2_\perp(0,\pi) \mapsto L^2_\perp(0,\pi)$, $k \in \mathbb{N}$, in such a way that
	\begin{equation}\label{eq:tk}
		T_k \cos(nx) = \cos(nkx),
		\quad
		n \in \mathbb{N}.
	\end{equation}
	We take any $h \in L^2_\perp(0,\pi)$ and define its even $2\pi$-periodic extension $h^*$ as follows:
	$$
	h^*(x) 
	=
	\left\{
	\begin{aligned}
		&h(x- 2 \pi \kappa)  &&\text{for}~~ 2 \pi\kappa \leq x \leq \pi (2\kappa+1),~~ \kappa \in \mathbb{N}_0,\\
		&h(\pi (2\kappa+1)-x) &&\text{for}~~  \pi (2\kappa+1) \leq x \leq 2 \pi (\kappa+1), ~~\kappa \in \mathbb{N}_0.
	\end{aligned}
	\right.
	$$
	For any $n \in \mathbb{N}$, we get
	$$
	n\int_0^\pi h^*(nx) \, \mathrm{d}x
	=
	\int_0^{n\pi} h^*(y) \, \mathrm{d}y
	=
	n \int_0^{\pi} h^*(y) \, \mathrm{d}y
	=0
	$$
	and hence $h^*(n\, \cdot) \in L^2_\perp(0,\pi)$.
	We define $T_k$ by
	\begin{equation*}
		T_k h(x) = h^*(kx).
	\end{equation*}
	Evidently, \eqref{eq:tk} is satisfied.	Arguing as in \cite[Lemma 8]{BM2016}, it can be shown that each $T_k$ is indeed a linear isometry of $L^2_\perp(0,\pi)$.
	In particular, $T_1$ equals the identity operator.
	
    By the property \eqref{eq:tk} of the isometries $T_k$, the claimed representation formula \eqref{eq:duffin-g} takes the form
	\begin{equation}\label{eq:duffin-g-0}
		g_n(x) = \cos(nx) + \sum_{k=1}^{\infty}C_{n,k}  \cos(nkx),
		\quad n \in \mathbb{N}.
	\end{equation}
    In order to show that such representation exists,  	
	we observe, using the pointwise definition~\eqref{eq:fucikpiecewise1},  that each $f^n_{\alpha,\beta}$ has a dilated structure in the sense of
	\begin{equation}\label{eq:f-even1}
		f_{\alpha(n),\beta(n)}^n(x) = 
		f_{\gamma_n,\frac{\gamma_n}{(2\sqrt{\gamma_n}-1)^2}}^1\left(nx\right),
	\end{equation}
	for $\alpha\geq n^2\geq\beta$, where $\left(\gamma_n,\frac{\gamma_n}{(2\sqrt{\gamma_n}-1)^2}\right)\in\Gamma_1$ is the projection of $(\alpha(n),\beta(n))\in\Gamma_n$ along the line through the origin to the first non-trivial curve of the Fu\v{c}\'ik spectrum, i.e., we have
	\begin{equation*}\label{eq:dilated-map}
	\alpha(n)=n^2\gamma_n, \qquad 	\beta(n)=\frac{n^2\gamma_n}{(2\sqrt{\gamma_n}-1)^2}.
	\end{equation*}
	In view of \eqref{eq:f-even1}, we see that $g_n$ also has a dilated structure:
	\begin{equation}\label{eq:dilated-g}
		g_n(x) 
		= 
		h_n(nx)
		:=
		f_{\gamma_n,\frac{\gamma_n}{(2\sqrt{\gamma_n}-1)^2}}^1(nx)
		-
		\frac{\sqrt{2}}{\pi}\langle f^n_{\alpha,\beta},\phi_0\rangle
	\end{equation}
	with
	\begin{equation*}
		h_n
		=
		f_{\gamma_n,\frac{\gamma_n}{(2\sqrt{\gamma_n}-1)^2}}^1
		-
			\frac{\sqrt{2}}{\pi}\langle f^n_{\alpha,\beta},\phi_0\rangle
		=
		f_{\gamma_n,\frac{\gamma_n}{(2\sqrt{\gamma_n}-1)^2}}^1
		-
			\frac{\sqrt{2}}{\pi}\langle f^1_{\gamma_n,\frac{\gamma_n}{(2\sqrt{\gamma_n}-1)^2}},\phi_0\rangle
		\in L^2_\perp(0,\pi).
	\end{equation*}
	Similarly, if $\beta > n^2 > \alpha$ for some $n\in\mathbb{N}$, we again obtain a dilated structure for $f^n_{\alpha,\beta}$:
	$$
	f_{\alpha(n),\beta(n)}^n(x) = 
		f_{\frac{\delta_n}{(2\sqrt{\delta_n}-1)^2},\delta_n}^1\left(nx\right),
	$$
	where $\delta_n > 1$ satisfies
		\begin{equation*}\label{eq:dilated-map2}
			\alpha(n)=\frac{n^2\delta_n}{(2\sqrt{\delta_n}-1)^2}, 
			\qquad 	
			\beta(n)= n^2\delta_n,
		\end{equation*}
	and we have
	\begin{equation}\label{eq:dilated-g-0}
		g_n(x) 
		= 
		\tilde{h}_n(nx)
		:=
		f_{\frac{\delta_n}{(2\sqrt{\delta_n}-1)^2},\delta_n}^1(nx)
		-
		\frac{\sqrt{2}}{\pi}\langle f^n_{\alpha,\beta},\phi_0\rangle
	\end{equation}
	with $\tilde{h}_n \in L^2_\perp(0,\pi)$.
	
	In view of \eqref{eq:dilated-g} and \eqref{eq:dilated-g-0}, we have
	\begin{equation*}\label{eq:fourier-g}
		g_n(x) 
		= 
		\sum_{k=1}^{\infty}A_{n,k}\cos(nkx),
		\quad
		n \in \mathbb{N},
	\end{equation*}
	where the constants $A_{n,k}$, $k\in\mathbb{N}$, are the Fourier coefficients of the functions $h_n$ or $\tilde{h}_n$, depending on the relation of $\alpha$ to $n^2$. 
	Setting $C_{n,1}=A_{n,1}-1$ and $C_{n,k}=A_{n,k}$ for $k \geq 2$, we get the desired representation formula \eqref{eq:duffin-g-0}.
	
	Let us show that $|C_{n,k}| \leq c_k$, where $\sum_{k=1}^{\infty} c_k < 1$.
	We derive estimates only for the case $\alpha \geq n^2 \geq \beta$, since estimates for the case $\beta > n^2 > \alpha$ are identical due to Remark~\ref{rem:sym}. Hereinafter, we write $\gamma$ instead of $\gamma_n$, for brevity. If $\gamma=1$, then  $f^n_{\alpha,\beta}=\phi_n$ with $A_{n,1}=1$ and $A_{n,k}=0$ for every $k\geq2$. 
	Assume that $\gamma>1$. We use the expressions \eqref{eq:scal:mdivn0} and \eqref{eq:scal:aeqm-mdivn-main}
	to obtain
	\begin{align}
		A_{n,k} 
		&= 
		\frac2{\pi}\int_0^{\pi}h_n(x)\cos(kx)\,\mathrm{d}x  
		=
		\label{eq:Ak}
		\frac{8 \gamma^2 (1-\sqrt{\gamma})\cos\left(\frac{k\pi}{2\sqrt{\gamma}}\right)}{\pi(k^2-\gamma)(k^2(2\sqrt{\gamma}-1)^2-\gamma)(2\sqrt{\gamma}-1)},
	\end{align}
	provided $\gamma  \neq k^2$, and
	\begin{equation}\label{eq:ak2}
		A_{n,k} 
		= 
		\frac{1}{2k (2k-1)}
	\end{equation}	
	for $\gamma = k^2$, $k\geq2$. It is not hard to deduce from \eqref{eq:Ak} that
	\begin{align*}
		A_{n,1}-1
		&=
		\frac{8 \gamma^2\cos\left(\frac{\pi}{2\sqrt{\gamma}}\right)}{\pi(\gamma-1)(2\sqrt{\gamma}-1)(3\sqrt{\gamma}-1)}-1
		\leq
		\frac{4 \sqrt{\gamma^3}}{(\sqrt{\gamma}+1)(2\sqrt{\gamma}-1)(3\sqrt{\gamma}-1)}-1 < 0,
	\end{align*}
	thanks to the upper bound
	\begin{equation}\label{eq:cos}
		\Big|\cos\left(\frac{\pi}{2\sqrt{x}}\right)\Big|
		=
		\Big|\sin\left(\frac{\pi}{2}-\frac{\pi}{2\sqrt{x}}\right)\Big|
		\leq
		\Big|\frac{\pi}{2} - \frac{\pi}{2\sqrt{x}}\Big| = \frac{\pi|\sqrt{x}-1|}{2\sqrt{x}},
		\quad x>0.
	\end{equation}
	Thus, we estimate $|C_{n,1}|$ as follows:
	\begin{align}
		\notag
		|C_{n,1}| = 1-A_1
		&\leq
		1-
		\frac{\sqrt{\gamma}((\sqrt{24}-\pi)\sqrt{\gamma} + \pi)((\sqrt{24}+\pi)\sqrt{\gamma} - \pi)}{6(\sqrt{\gamma}+1)(2\sqrt{\gamma}-1)(3\sqrt{\gamma}-1)}\\
		\label{eq:c1}
		&=
		\frac{(\sqrt{\gamma}-1)((12+\pi^2)\gamma+(18-\pi^2)\sqrt{\gamma}-6)}{6(\sqrt{\gamma}+1)(2\sqrt{\gamma}-1)(3\sqrt{\gamma}-1)},
	\end{align}
	where we used the lower bound 
	\begin{align*}
		\cos\left(\frac{\pi}{2\sqrt{\gamma}}\right)
		&\geq
		\frac{\pi(\sqrt{\gamma}-1)}{2\sqrt{\gamma}}
		-
		\frac{\pi^3(\sqrt{\gamma}-1)^3}{48\sqrt{\gamma^3}}\\
		&=
		\frac{\pi(\sqrt{\gamma}-1)((\sqrt{24}-\pi)\sqrt{\gamma} + \pi)((\sqrt{24}+\pi)\sqrt{\gamma} - \pi)}{48 \sqrt{\gamma^3}},
		\quad \gamma>1.
	\end{align*}
	The right-hand side in \eqref{eq:c1} is strictly increasing with respect to $\gamma \in (1,\infty)$, see Lemma \ref{lem:rational}.
	Therefore, we obtain the uniform bound
	\begin{equation}\label{eq:c1-main}
		|C_{n,1}| 
		\leq 
		\lim_{\gamma \to \infty}
		\frac{(\sqrt{\gamma}-1)((12+\pi^2)\gamma+(18-\pi^2)\sqrt{\gamma}-6)}{6(\sqrt{\gamma}+1)(2\sqrt{\gamma}-1)(3\sqrt{\gamma}-1)}
		=
		\frac{12+\pi^2}{36}.
	\end{equation}
	Next, let us estimate $C_{n,k}$ with $k \geq 2$.
	Assume first that $\gamma \neq k^2$. 
	Using \eqref{eq:cos}, we obtain from \eqref{eq:Ak} that
	\begin{align}
		\notag
		|C_{n,k}| = |A_{n,k}|
		&\leq
		\frac{8 \gamma^2 (\sqrt{\gamma}-1)}{\pi |k^2-\gamma| (k^2(2\sqrt{\gamma}-1)^2-\gamma)(2\sqrt{\gamma}-1)}\Big|\cos\left(\frac{k\pi}{2\sqrt{\gamma}}\right)\Big|\\
		\label{eq:ck}
		&\leq
		\frac{4 \sqrt{\gamma^3} (\sqrt{\gamma}-1)}{(k+\sqrt{\gamma}) (k^2(2\sqrt{\gamma}-1)^2-\gamma)(2\sqrt{\gamma}-1)}. 
	\end{align}
	Thanks to Lemma \ref{lem:rational}, the right-hand side in \eqref{eq:ck} is strictly increasing with respect to $\gamma \in (1,\infty)$, which yields
	\begin{equation}\label{eq:ck-main}
		|C_{n,k}| 
		\leq
		\lim_{\gamma \to \infty}
		\frac{4 \sqrt{\gamma^3} (\sqrt{\gamma}-1)}{(k+\sqrt{\gamma}) (k^2(2\sqrt{\gamma}-1)^2-\gamma)(2\sqrt{\gamma}-1)}
		=
		\frac{2}{4k^2-1}.
	\end{equation}
	Assume now that $\gamma = k^2$ for some $k\geq2$.
	It is evident from \eqref{eq:ak2} that in this case $C_{n,k}$ satisfy the same bound as \eqref{eq:ck-main}: 
	$$
	|C_{n,k}| = \frac{1}{2k(2k-1)} \leq \frac{2}{4k^2-1}.
	$$
	Let us recall that the same bounds as  \eqref{eq:c1-main} and \eqref{eq:ck-main} hold true also in the case $\beta>n^2>\alpha$, since the expressions \eqref{eq:scal:mdivn0} and \eqref{eq:scal:aeqm-mdivn-main}
	are identical to \eqref{eq:scal:mdivn10} and \eqref{aeqm-mdivn-main2}, respectively, up to the sign and the change of $\alpha$ to $\beta$.	
	
	Finally, we deduce from \eqref{eq:c1-main} and \eqref{eq:ck-main} that
	$$
	\sum_{k=1}^{\infty} c_k 
	\leq
	\frac{12+\pi^2}{36}
	+
	2 \sum_{k=2}^{\infty}\frac{1}{4k^2-1}
	=
	\frac{12+\pi^2}{36}
	+
	\frac{1}{3}
	=
	0.9408223\ldots < 1.
	$$
	Therefore, applying \cite[Theorem D]{duff}, we conclude that $\{g_n\}_{n \in \mathbb{N}}$ is a Riesz basis in $L^2_\perp(0,\pi)$.
	
	\smallskip
	
	Let us now show that $F_{\alpha,\beta}$ is complete in $L^2(0,\pi)$. Taking arbitrary $\xi \in L^2(0,\pi)$, we have $\xi - \frac{\sqrt{2}}{\pi}\langle \xi,\phi_0\rangle \in L^2_\perp(0,\pi)$. 
	Hence, there exists a sequence $\{b_k\}_{k \in \mathbb{N}} \subset \mathbb{R}$ satisfying the following property: for every $\varepsilon>0$, we can find $K \in \mathbb{N}$ such that
	$$
	\Big\|
	\xi- \frac{\sqrt{2}}{\pi}\langle \xi,\phi_0\rangle - \sum_{n=1}^k b_n g_n
	\Big\| \leq \varepsilon
	$$
	for every $k \geq K$.
	Recalling the definition \eqref{eq:gn} of $g_n$, we conclude that
	\begin{equation}\label{eq:approx-f}
	\Big\|\xi 
	- 
	\frac{2}{\pi}\Big(\langle \xi,\phi_0\rangle 
	-  
	\sum_{n=1}^k \langle f^n_{\alpha,\beta},\phi_0\rangle b_n\Big) 
	f^0_{\alpha,\beta}
	- 
	\sum_{n=1}^k b_n f^n_{\alpha,\beta}\Big\| \leq \varepsilon.
	\end{equation}
	That is, $F_{\alpha,\beta}$ is complete in $L^2(0,\pi)$, and the proof of Theorem \ref{thm:main2} is finished.
	
	\begin{remark}\label{rem:b0}
		We emphasize that 
		$$
		b_0 = b_0(k) :=
		\frac{2}{\pi}\Big(\langle \xi,\phi_0\rangle 
		-  
		\sum_{n=1}^k \langle f^n_{\alpha,\beta},\phi_0\rangle b_n\Big)
		$$
		is the only coefficient in the approximation \eqref{eq:approx-f} of $\xi$ by $F_{\alpha,\beta}$ which depends on $k$. 
		It is not hard to see from \eqref{eq:approx-f} that if $\{b_0(k)\}_{k \in \mathbb{N}}$ converges for any $\xi \in L^2(0,\pi)$, then $F_{\alpha,\beta}$ is a basis in $L^2(0,\pi)$.
		Since $\sum_{n=1}^\infty b_n g_n$ converges strongly and $\{g_n\}_{n\in\mathbb{N}}$ is a Riesz basis in $L^2_\perp(0,\pi)$, we have 
		$\sum_{n=1}^\infty b_n^2 < \infty$, see, e.g., \cite[Theorem 9]{young}. 
		On the other hand, we obtain from \eqref{eq:scalar-m0-1} and \eqref{eq:scalar-m0-2}
		that
		\begin{equation}\label{eq:conv:scal21}
			\sum_{n=1}^\infty
			\langle f^n_{\alpha,\beta},\phi_0\rangle^2
			=
			8\sum_{n=1}^\infty
			\frac{(\max(\sqrt{\alpha(n)},\sqrt{\beta(n)})-n)^2}{(2\max(\sqrt{\alpha(n)},\sqrt{\beta(n)})-n)^2}.
		\end{equation}
		In view of Remark \ref{rem:conv},  the convergence of \eqref{eq:conv:scal21} is equivalent to the assumption \eqref{eq:thm:main1} of Theorem \ref{thm:main1}.
		Thus, we conclude by
		$$
		\sum_{n=1}^\infty |\langle f^n_{\alpha,\beta},\phi_0\rangle b_n|
		\leq 
		\Big(\sum_{n=1}^\infty \langle f^n_{\alpha,\beta},\phi_0\rangle^2\Big)^{1/2}
		\Big(\sum_{n=1}^\infty b_n^2\Big)^{1/2}<\infty
		$$
		the absolute convergence of $\sum_{n=1}^\infty \langle f^n_{\alpha,\beta},\phi_0\rangle b_n$ if \eqref{eq:thm:main1} is satisfied.
		This gives the convergence of $\{b_0(k)\}_{k \in \mathbb{N}}$ and provides an alternative proof of the basisness of $F_{\alpha,\beta}$.
	\end{remark}

	\section{Biorthogonal system}\label{sec:biort}
	
	
	Each Fu\v{c}\'{i}k system $F_{\alpha,\beta}$ which is a Riesz basis in the Hilbert space $L^2(0,\pi)$ possesses a complete biorthogonal sequence $\{\psi_m\}_{m \in \mathbb{N}_0}$, i.e., a sequence with $\langle f^n_{\alpha,\beta},\psi_m\rangle = \delta_{n,m}$ for every $m,n\in\mathbb{N}_0$, that satisfies
	$$
	\sum_{m=0}^{\infty} \langle f,\psi_m\rangle^2<\infty
	$$
	for any $f\in L^2(0,\pi)$, see, e.g., \cite[Chapter I, Theorem 9]{young}. 
	The decomposition of a function $f\in L^2(0,\pi)$ with respect to the Riesz basis $F_{\alpha,\beta}$  is then given by
	$$
	f = \sum_{n=0}^{\infty} \langle f,\psi_n\rangle f^n_{\alpha,\beta}.
	$$
	We want to provide closed form expressions for the biorthogonal sequence $\{\psi_m\}_{m \in \mathbb{N}_0}$. 
	First, we consider the case $m\geq1$ and 
    use the following ansatz on the form of $\psi_m$:
	$$
	\psi_m=\sum_{k=1}^{m}C^m_k\phi_k.
	$$
	This choice guarantees that $\langle f^n_{\alpha,\beta},\psi_m\rangle=0$ for every $n>m$, see Proposition \ref{prop:scalar}.
	Let us derive the constants $C^m_k$. 
	We start by considering the scalar product of $\psi_m$ with $f^m_{\alpha,\beta}$ and obtain
	$$
	1 = \langle f^m_{\alpha,\beta},\psi_m\rangle = \sum_{k=1}^{m}C^m_k\langle f^m_{\alpha,\beta},\phi_k\rangle = C^m_m\langle f^m_{\alpha,\beta},\phi_m\rangle,
	$$
	where we used that $\langle f^m_{\alpha,\beta},\phi_k\rangle=0$ for all $k<m$, 
	see Proposition \ref{prop:scalar}.
	We know from Remark~\ref{rem:nn} that $\langle f^m_{\alpha,\beta},\phi_m\rangle\neq0$ every $m\in\mathbb{N}$, which yields  $C^m_m=\langle f^m_{\alpha,\beta},\phi_m\rangle^{-1}$. 
	Successively taking the scalar products of $\psi_m$ with $f^{m-1}_{\alpha,\beta},f^{m-2}_{\alpha,\beta},\ldots$, we obtain the remaining constants recursively as follows:
	$$
	0 = \langle f^{m-l}_{\alpha,\beta},\psi_m\rangle =  \sum_{k=m-l}^{m}C^m_k\langle f^{m-l}_{\alpha,\beta},\phi_k\rangle = C^m_{m-l}\langle f^{m-l}_{\alpha,\beta},\phi_{m-l}\rangle + \sum_{\substack{k=m-l+1 \\ (m-l)|k}}^{m}C^m_k\langle f^{m-l}_{\alpha,\beta},\phi_k\rangle,
	$$
	where $1 \leq l \leq m-1$.
	Hence, we have
	$$
	C^m_{m-l}=-\langle f^{m-l}_{\alpha,\beta},\phi_{m-l}\rangle^{-1}\sum_{\substack{k=m-l+1 \\ (m-l)|k}}^{m}C^m_k\langle f^{m-l}_{\alpha,\beta},\phi_k\rangle
	$$
	for $1\leq l\leq m-1$. By an inductive argument, we can further show that $C^m_k=0$ for all $m/k\notin\mathbb{N}$. 
	
	We summarize our observations on the form of $\psi_m$ for $m\in\N$ in the following statement.
	\begin{theorem}\label{thm:biortF}
		Let a Fu\v{c}\'{i}k system $F_{\alpha,\beta}$ be a Riesz basis in $L^2(0,\pi)$. Then for $m\geq1$ the elements $\psi_m$ of the corresponding biorthogonal system $\{\psi_m\}_{m \in \mathbb{N}_0}$ have the form
		\begin{equation}\label{eq:biort1}
		\psi_m=\sum_{k|m}C^m_k\phi_k
		\end{equation}
		with $C^m_m=\langle f^m_{\alpha,\beta},\phi_m\rangle^{-1}$ and
		$$
		C^m_k=-\langle f^k_{\alpha,\beta},\phi_k\rangle^{-1}\sum_{l\in d(k,m)} C^m_l\langle f^k_{\alpha,\beta},\phi_l\rangle
		$$
		for each $k<m$ such that $k|m$, where $d(k,m)=\{n\in\mathbb{N}:n|m\wedge k|n\wedge n>k\}$.
	\end{theorem}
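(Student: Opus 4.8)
The plan is to reduce everything to a verification of the biorthogonality relations by invoking uniqueness. Since $F_{\alpha,\beta}$ is a Riesz basis in $L^2(0,\pi)$ it is complete, so there is at most one sequence $\{\psi_m\}_{m\in\mathbb{N}_0}$ with $\langle f^n_{\alpha,\beta},\psi_m\rangle=\delta_{n,m}$ for all $n,m\in\mathbb{N}_0$ (any two such sequences differ by vectors each orthogonal to the complete system $F_{\alpha,\beta}$), and such a sequence exists by \cite[Chapter I, Theorem 9]{young}. Hence it suffices to show that the right-hand side of \eqref{eq:biort1} satisfies these relations for $m\ge1$. I would first note that this right-hand side is a well-defined element of $L^2_\perp(0,\pi)$: $m$ has only finitely many divisors, and, because $\langle f^k_{\alpha,\beta},\phi_k\rangle\neq0$ by Remark \ref{rem:nn}, the recursion for the coefficients $C^m_k$ is well-founded once the divisors of $m$ are processed in decreasing order.

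Next I would dispose of the cases in which the scalar product is forced to vanish. Each $\psi_m$ with $m\ge1$ is a combination of $\phi_k$ with $k\ge1$, so $\langle f^0_{\alpha,\beta},\psi_m\rangle=\langle\phi_0,\psi_m\rangle=0$. For $n\in\mathbb{N}$ with $n\nmid m$ — in particular for every $n>m$ — I expand $\langle f^n_{\alpha,\beta},\psi_m\rangle=\sum_{k\mid m}C^m_k\langle f^n_{\alpha,\beta},\phi_k\rangle$ and use Proposition \ref{prop:scalar}\,\ref{prop:scalar:I}\,\ref{prop:scalar:I:iii} and \ref{prop:scalar:II}\,\ref{prop:scalar:II:iii}, by which $\langle f^n_{\alpha,\beta},\phi_k\rangle=0$ unless $n\mid k$; since $n\mid k\mid m$ contradicts $n\nmid m$, the sum is empty and the scalar product is $0$.

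Then I would treat $n\mid m$. For $n=m$ only the term $k=m$ remains, giving $C^m_m\langle f^m_{\alpha,\beta},\phi_m\rangle=1$ by the choice of $C^m_m$. For $n\mid m$ with $n<m$, the same argument leaves only indices $k$ with $n\mid k\mid m$; separating $k=n$,
\[
\langle f^n_{\alpha,\beta},\psi_m\rangle
= C^m_n\,\langle f^n_{\alpha,\beta},\phi_n\rangle
+\sum_{l\in d(n,m)}C^m_l\,\langle f^n_{\alpha,\beta},\phi_l\rangle,
\]
since $\{k:\,k\mid m,\ n\mid k,\ k>n\}=d(n,m)$, and this vanishes precisely by the defining relation for $C^m_n$. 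Combining the cases yields $\langle f^n_{\alpha,\beta},\psi_m\rangle=\delta_{n,m}$ for all $n\in\mathbb{N}_0$ and all $m\ge1$, and uniqueness then identifies \eqref{eq:biort1} with the biorthogonal system. (If one prefers to derive \eqref{eq:biort1} from the broader ansatz $\psi_m=\sum_{k=1}^m C^m_k\phi_k$ used in the discussion above, the missing step is to show $C^m_k=0$ whenever $k\nmid m$; this follows by downward induction over the divisors of $m$, because in the recursion for such a $C^m_k$ each admissible index $l$ either fails to divide $m$, so that $C^m_l=0$ by the inductive hypothesis, or divides $m$, forcing $k\mid l\mid m$, a contradiction.)

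No analytic difficulty arises: all scalar products are quoted from Proposition \ref{prop:scalar} and Remark \ref{rem:nn}, and each $\psi_m$ is a finite sum. The only step requiring care — and the main obstacle, such as it is — is the combinatorial bookkeeping with the divisibility constraints on the indices, in particular recognizing that the index set left after expanding $\langle f^n_{\alpha,\beta},\psi_m\rangle$ for $n\mid m$, $n<m$, is exactly $d(n,m)$, so that the recursive definition of $C^m_n$ is tailored to cancel the surviving cross terms.
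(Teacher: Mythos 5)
Your proposal is correct and takes essentially the same route as the paper: the same span-of-$\phi_k$ ansatz, the same vanishing results from Proposition \ref{prop:scalar}, the nondegeneracy from Remark \ref{rem:nn}, and the same recursion, the only difference being that the paper derives the coefficients by successively imposing $\langle f^{m-l}_{\alpha,\beta},\psi_m\rangle=0$ while you verify the stated formula directly and then invoke uniqueness of the biorthogonal system of a complete sequence. Your parenthetical downward induction over the divisors of $m$ is precisely the ``inductive argument'' the paper leaves implicit for showing $C^m_k=0$ when $k\nmid m$, so it is a welcome filled-in detail rather than a deviation.
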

	Second, we characterize the remaining element $\psi_0$.	
	Let
	$$
	g_n(x):= f^n_{\alpha,\beta}(x) 
	- 
	\frac{\sqrt{2}}{\pi}
	\langle f^n_{\alpha,\beta},\phi_0\rangle.
	$$
	We know from Theorem \ref{thm:main2} that $\{g_n\}_{n \in \mathbb{N}}$ is a Riesz basis in $L^2_\perp(0,\pi)$.
	Consequently, there is a corresponding unique biorthogonal system. 
	This system is given by the sequence $\{\psi_m\}_{m \in \mathbb{N}}$ described by Theorem \ref{thm:biortF}.
	Indeed, we notice that $\psi_m \in L^2_\perp(0,\pi)$ for $m \geq 1$ by the form \eqref{eq:biort1}, and hence
	\begin{equation*}\label{eq:biortg}
	\langle g_n, \psi_m\rangle
	=
	\langle f^n_{\alpha,\beta}, \psi_m\rangle
	-
	\frac{\sqrt{2}}{\pi}\langle f^n_{\alpha,\beta},\phi_0\rangle
	\langle 1, \psi_m\rangle
	=
	\langle f^n_{\alpha,\beta}, \psi_m\rangle
	=
	\delta_{n,m}
	\end{equation*}
	for every $m,n \in \mathbb{N}$.
	
	Taking now any $\xi \in L^2(0,\pi)$, we have $\xi - \frac{\sqrt{2}}{\pi}\langle \xi,\phi_0\rangle \in L^2_\perp(0,\pi)$. 
	Hence, for every $\varepsilon>0$ there exists $K \in \mathbb{N}$ such that
	$$
	\Big\|\xi- \frac{\sqrt{2}}{\pi}\langle \xi,\phi_0\rangle 
	- 
	\sum_{n=1}^k \langle \xi- \frac{\sqrt{2}}{\pi}\langle \xi,\phi_0\rangle , \psi_n \rangle g_n\Big\| 
	=
	\Big\|\xi- \frac{\sqrt{2}}{\pi}\langle \xi,\phi_0\rangle 
	- 
	\sum_{n=1}^k \langle \xi, \psi_n \rangle g_n\Big\| 
	\leq \varepsilon
	$$
	for every $k\geq K$.
	Recalling the definition of $g_n$, we conclude that
	\begin{align*}
		\Big\|\xi 
		- 
		\frac{\sqrt{2}}{\pi}\Big(\langle \xi,\phi_0\rangle 
		-  
		\sum_{n=1}^k \langle f^n_{\alpha,\beta},\phi_0\rangle \langle \xi, \psi_n \rangle\Big) 
		- 
		\sum_{n=1}^k \langle \xi, \psi_n \rangle f^n_{\alpha,\beta}\Big\|
		\leq \varepsilon.
	\end{align*}
	On the other hand, since we assume that $F_{\alpha,\beta}$ is a Riesz basis in $L^2(0,\pi)$, there exists $K_1 \in \mathbb{N}$ such that for any $k \geq K_1$ we have 
	$$
	\Big\|\xi 
	- 
	\frac{\sqrt{2}}{\pi}\langle \xi, \psi_0 \rangle 
	- 
	\sum_{n=1}^k \langle \xi, \psi_n \rangle f^n_{\alpha,\beta}\Big\|
	\leq \varepsilon.
	$$
	Therefore, by the triangle inequality, we get for any $k \geq \max\{K,K_1\}$ that
	
	$$
	\Big\|
	\Big(\langle \xi,\phi_0\rangle 
	-  
	\sum_{n=1}^k \langle f^n_{\alpha,\beta},\phi_0\rangle \langle \xi, \psi_n \rangle\Big) 
	-
	\langle \xi, \psi_0 \rangle
	\Big\|
	\leq 
	\frac{2 \pi \varepsilon}{\sqrt{2}}.
	$$
	Since $\xi \in L^2(0,\pi)$ is arbitrary, this establishes the weak convergence in $L^2(0,\pi)$:
	\begin{equation}\label{eq:weaklimit}
	\phi_0
	-
	\sum_{n=1}^k \langle f^n_{\alpha,\beta},\phi_0\rangle
	\psi_n
    \;
	\rightharpoonup
	\;
	\psi_0
	\quad
	\text{as}~ k \to \infty.
	\end{equation}
	
	Let us find assumptions which guarantee the strong convergence in $L^2(0,\pi)$.
	Since $\{\psi_m\}_{m \in \mathbb{N}}$ is a biorthogonal system of the Riesz basis $\{g_n\}_{n \in \mathbb{N}}$ in $L^2_\perp(0,\pi)$, $\{\psi_m\}_{m \in \mathbb{N}}$ is a Riesz basis in $L^2_\perp(0,\pi)$ by itself, see, e.g., \cite[Theorem 8]{young}. Hence, $\sum_{n=1}^\infty \langle f^n_{\alpha,\beta},\phi_0\rangle
	\psi_n$
	converges strongly in $L^2_\perp(0,\pi)$ and, thus, in $L^2(0,\pi)$ if and only if 
	$\sum_{n=1}^\infty \langle f^n_{\alpha,\beta},\phi_0\rangle^2 < \infty$,
	see \cite[Corollary 11.2]{sing} and \cite[Theorem 9]{young}.
	Therefore, we conclude from the weak convergence \eqref{eq:weaklimit} that the strong convergence
	\begin{equation*}
	\phi_0
	-
	\sum_{n=1}^k \langle f^n_{\alpha,\beta},\phi_0\rangle
	\psi_n
	\;
	\to
	\;
	\psi_0
	\quad
	\text{as}~ k \to \infty
    \end{equation*}
	in $L^2(0,\pi)$ holds if and only if $\sum_{n=1}^\infty \langle f^n_{\alpha,\beta},\phi_0\rangle^2 < \infty$.
	
	Let us summarize the observations on $\psi_0$ in the following theorem.	
	\begin{theorem}\label{thm:biortF0}
		Let a Fu\v{c}\'{i}k system $F_{\alpha,\beta}$ be a Riesz basis in $L^2(0,\pi)$. 
		Then the first element $\psi_0$ of the corresponding biorthogonal system $\{\psi_m\}_{m \in \mathbb{N}_0}$ can be characterized through the following weak convergence in $L^2(0,\pi)$:
		\begin{equation*}\label{eq:weaklimit0}
			\phi_0
			-
			\sum_{n=1}^k \langle f^n_{\alpha,\beta},\phi_0\rangle
			\psi_n
			\;
			\rightharpoonup
			\;
			\psi_0
			\quad
			\text{as}~ k \to \infty.
		\end{equation*}
		Moreover, we have
		\begin{equation*}\label{eq:stronglimit0}
		\psi_0
		=
		\phi_0
		-
		\sum_{n=1}^\infty \langle f^n_{\alpha,\beta},\phi_0\rangle
		\psi_n
		\end{equation*}
		in the sense of the strong convergence in $L^2(0,\pi)$
		if and only if $\sum_{n=1}^\infty \langle f^n_{\alpha,\beta},\phi_0\rangle^2 < \infty$.
	\end{theorem}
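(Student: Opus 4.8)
The plan is to obtain both assertions by comparing, for an arbitrary $\xi\in L^2(0,\pi)$, the expansion of $\xi$ with respect to the Riesz basis $F_{\alpha,\beta}$ of the whole space with the expansion of its mean-free part $P\xi:=\xi-\frac{\sqrt2}{\pi}\langle\xi,\phi_0\rangle\in L^2_\perp(0,\pi)$ with respect to the Riesz basis $\{g_n\}_{n\in\mathbb{N}}$ of $L^2_\perp(0,\pi)$ supplied by Theorem \ref{thm:main2}. First I would record the structure of the biorthogonal systems. By the form \eqref{eq:biort1}, every $\psi_m$ with $m\geq1$ lies in $L^2_\perp(0,\pi)$; since $g_n=f^n_{\alpha,\beta}-\frac{\sqrt2}{\pi}\langle f^n_{\alpha,\beta},\phi_0\rangle$ and $\langle f^n_{\alpha,\beta},\psi_m\rangle=\delta_{n,m}$, this gives $\langle g_n,\psi_m\rangle=\delta_{n,m}$ for all $n,m\geq1$, so $\{\psi_m\}_{m\geq1}$ is the biorthogonal system of $\{g_n\}_{n\geq1}$ in $L^2_\perp(0,\pi)$ and, being biorthogonal to a Riesz basis, is itself a Riesz basis of $L^2_\perp(0,\pi)$ by \cite[Theorem~8]{young}.

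To prove the weak-convergence assertion, I fix $\xi$ and expand $P\xi$ in $\{g_n\}_{n\geq1}$; its coefficients are $\langle P\xi,\psi_n\rangle=\langle\xi,\psi_n\rangle$ because $\psi_n\in L^2_\perp(0,\pi)$. Substituting the definition of $g_n$ into the partial sums $\sum_{n=1}^k\langle\xi,\psi_n\rangle g_n$ and using $\sum_{n=1}^k\langle\xi,\psi_n\rangle g_n\to P\xi$, one finds that $\sum_{n=1}^k\langle\xi,\psi_n\rangle f^n_{\alpha,\beta}$ together with the constant function $\frac{\sqrt2}{\pi}\bigl(\langle\xi,\phi_0\rangle-\sum_{n=1}^k\langle f^n_{\alpha,\beta},\phi_0\rangle\langle\xi,\psi_n\rangle\bigr)$ converges to $\xi$ in $L^2(0,\pi)$. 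On the other hand, the Riesz-basis expansion $\xi=\sum_{n\geq0}\langle\xi,\psi_n\rangle f^n_{\alpha,\beta}$ shows that $\sum_{n=1}^k\langle\xi,\psi_n\rangle f^n_{\alpha,\beta}$ together with $\langle\xi,\psi_0\rangle f^0_{\alpha,\beta}$ converges to $\xi$. Subtracting these two approximations, the sums over $n$ cancel, so the two constant functions differ by a quantity tending to $0$ in $L^2(0,\pi)$, which (recalling $f^0_{\alpha,\beta}=\phi_0$ and that $\xi$ was arbitrary) is precisely the claimed weak convergence $\phi_0-\sum_{n=1}^k\langle f^n_{\alpha,\beta},\phi_0\rangle\psi_n\rightharpoonup\psi_0$.

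For the strong-convergence criterion I would use that, $\{\psi_m\}_{m\geq1}$ being a Riesz basis of $L^2_\perp(0,\pi)$, a series $\sum_{n\geq1}c_n\psi_n$ converges in $L^2(0,\pi)$ if and only if $\sum_{n\geq1}c_n^2<\infty$ (see \cite[Theorem~9]{young} and \cite[Corollary~11.2]{sing}). Applying this with $c_n=\langle f^n_{\alpha,\beta},\phi_0\rangle$: if $\sum_{n\geq1}\langle f^n_{\alpha,\beta},\phi_0\rangle^2<\infty$, then $\sum_{n=1}^k\langle f^n_{\alpha,\beta},\phi_0\rangle\psi_n$ converges strongly, hence also weakly, and comparison with the weak limit found above (together with uniqueness of weak limits) forces $\psi_0=\phi_0-\sum_{n=1}^\infty\langle f^n_{\alpha,\beta},\phi_0\rangle\psi_n$ in the strong sense; conversely, if this strong convergence holds, then $\sum_{n=1}^k\langle f^n_{\alpha,\beta},\phi_0\rangle\psi_n$ converges strongly, and the same criterion gives $\sum_{n\geq1}\langle f^n_{\alpha,\beta},\phi_0\rangle^2<\infty$.

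The step I expect to be most delicate is the comparison in the second paragraph: it must be carried out with partial sums throughout, because a priori the series $\sum_n\langle f^n_{\alpha,\beta},\phi_0\rangle\langle\xi,\psi_n\rangle$ need not converge absolutely — by the Cauchy--Schwarz inequality its absolute convergence is equivalent to the very condition $\sum_n\langle f^n_{\alpha,\beta},\phi_0\rangle^2<\infty$ that separates the strong case from the merely weak one — so one has to verify that the cancellation of the $f^n_{\alpha,\beta}$-terms is legitimate before letting $k\to\infty$, and only afterwards extract the limit of the constant-function coefficients.
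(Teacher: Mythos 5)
Your strategy is the same as the paper's: you identify $\{\psi_m\}_{m\geq1}$ as the biorthogonal system of the Riesz basis $\{g_n\}_{n\geq1}$ of $L^2_\perp(0,\pi)$ supplied by Theorem \ref{thm:main2}, compare the $\{g_n\}$-expansion of $\xi-\frac{\sqrt{2}}{\pi}\langle\xi,\phi_0\rangle$ with the $F_{\alpha,\beta}$-expansion of $\xi$ for arbitrary $\xi$, and settle the strong case by the square-summability criterion for Riesz bases (\cite[Theorem 9]{young}, \cite[Corollary 11.2]{sing}). Your insistence on working with partial sums until the very end is also how the paper argues, and the strong/weak equivalence in your last paragraph is correct as it stands.

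The step that does not deliver the stated formula is the final identification of the two constant terms. The first approximation of $\xi$ carries the constant $\frac{\sqrt{2}}{\pi}\bigl(\langle\xi,\phi_0\rangle-\sum_{n=1}^{k}\langle f^n_{\alpha,\beta},\phi_0\rangle\langle\xi,\psi_n\rangle\bigr)$, whereas the zeroth term of the expansion $\xi=\sum_{n\geq0}\langle\xi,\psi_n\rangle f^n_{\alpha,\beta}$ is $\langle\xi,\psi_0\rangle f^0_{\alpha,\beta}=\frac{\sqrt{2}}{2}\langle\xi,\psi_0\rangle$, because $f^0_{\alpha,\beta}=\phi_0\equiv\frac{\sqrt{2}}{2}$ is not of unit norm ($\|\phi_0\|^2=\frac{\pi}{2}$). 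Subtracting the two approximations therefore gives, for every $\xi$, the scalar convergence $\langle\xi,\phi_0\rangle-\sum_{n=1}^{k}\langle f^n_{\alpha,\beta},\phi_0\rangle\langle\xi,\psi_n\rangle\to\frac{\pi}{2}\langle\xi,\psi_0\rangle$, i.e.\ $\phi_0-\sum_{n=1}^{k}\langle f^n_{\alpha,\beta},\phi_0\rangle\psi_n\rightharpoonup\frac{\pi}{2}\psi_0$, equivalently $\frac{2}{\pi}\bigl(\phi_0-\sum_{n=1}^{k}\langle f^n_{\alpha,\beta},\phi_0\rangle\psi_n\bigr)\rightharpoonup\psi_0$. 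The trivial system $f^n_{\alpha,\beta}=\phi_n$ confirms the factor: there the biorthogonal element is $\psi_0=\frac{2}{\pi}\phi_0$, while $\phi_0-\sum_{n\geq1}\langle\phi_n,\phi_0\rangle\psi_n=\phi_0$. So the phrase ``recalling $f^0_{\alpha,\beta}=\phi_0$'' conceals a genuine normalization mismatch: carried out precisely, your computation proves the claimed characterization only up to the factor $\frac{2}{\pi}$, and you should either carry this factor into the conclusion or observe explicitly that the statement requires it. (The same slip occurs in the paper's own proof, where the zeroth term of the expansion of $\xi$ is written as $\frac{\sqrt{2}}{\pi}\langle\xi,\psi_0\rangle$ instead of $\frac{\sqrt{2}}{2}\langle\xi,\psi_0\rangle$; the strong-convergence equivalence is unaffected, since it only uses that $\{\psi_n\}_{n\geq1}$ is a Riesz basis of $L^2_\perp(0,\pi)$ together with uniqueness of weak limits.)
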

	
	We obtain from \eqref{eq:scalar-m0-1} and \eqref{eq:scalar-m0-2}
	that
	\begin{equation}\label{eq:conv:scal1}
	\sum_{n=1}^\infty
	\langle f^n_{\alpha,\beta},\phi_0\rangle^2
	=
	8\sum_{n=1}^\infty
	\frac{(\max\{\sqrt{\alpha(n)},\sqrt{\beta(n)}\}-n)^2}{(2\max\{\sqrt{\alpha(n)},\sqrt{\beta(n)}\}-n)^2}.
	\end{equation}
	Notice that in view of Remark \ref{rem:conv}  the convergence of \eqref{eq:conv:scal1} is equivalent to the assumption~\eqref{eq:thm:main1} of Theorem \ref{thm:main1}.

	\appendix
	\section{}\label{sec:appendix}

	In order to prove Proposition \ref{prop:scalar} and Lemma \ref{lem:distances}, we evaluate the following integrals in the general form:
	\begin{align}
		\label{eq:ap:3}
		\int\cos^2(\sqrt{\delta}(x-x_0))\,\mathrm{d}x &= \frac12\left(x-x_0+\frac1{\sqrt{\delta}}\sin(\sqrt{\delta}(x-x_0))\cos(\sqrt{\delta}(x-x_0))\right)+C,\\
		\label{eq:ap:4}
		\int \cos(n(x-x_0))\cos(nx) \,\mathrm{d}x 
		&= \frac{1}{4 n} \sin(n (2x - x_0)) + \frac{x}{2}  \cos(n x_0) + C,
	\end{align}
	for $\delta\in\{\alpha,\beta\}$ and $n\in\mathbb{N}$.
	Moreover, if $\delta \neq n^2$, then we have
	\begin{align}
		\notag
		\int \cos(\sqrt{\delta}(x-x_0))\cos(nx) \,\mathrm{d}x 
		&= \frac{n}{n^2-\delta}
		\cos(\sqrt{\delta}(x-x_0))\sin(nx) \\
		\label{eq:ap:5}
		&- \frac{\sqrt{\delta}}{n^2-\delta}
		\sin(\sqrt{\delta}(x-x_0))\cos(nx)
		+C.
	\end{align}
    We also employ the following summation formulas. Let $d \in \mathbb{R}$ be such that $\sin(d/2) \neq 0$. Then for any $a \in \mathbb{R}$ and any $n \in \mathbb{N}$, one has
	\begin{equation}\label{eq:cos-sum}
		\sum_{k=0}^{n-1} \cos(a+kd) 
		= 
		\frac{\sin(nd/2)}{\sin(d/2)}
		\cos\left(
		a+\frac{(n-1)d}{2}
		\right),
	\end{equation}
	see, e.g., \cite{knapp}.
	On the other hand, if $d \in \mathbb{R}$ is such that $\sin(d/2) = 0$, then 
	\begin{equation}\label{eq:cos-sum-2}
		\sum_{k=0}^{n-1} \cos(a+kd) 
		= 
		n \cos(a).
	\end{equation}
	Finally, we use the following standard equalities for arbitrary $a,b,c,d \in \mathbb{R}$:
	\begin{align}\label{eq:cos-sin}
		\sin(a-b)+\sin(a+b)+\sin(c-d)+\sin(c+d) &= 2\sin(a)\cos(b)+2\sin(c)\cos(d),
	\\
	\label{eq:cos-sin2}
		2 \sin(a+b) \cos(a-b)
		&=
		\sin(2 a) + \sin(2 b).
	\end{align}
	
	\medskip
	
	\begin{proof}[Proof of Proposition \ref{prop:scalar}]
	Recall that
	$$
	l_1 = \frac{\pi}{\sqrt{\alpha}},
    \qquad
    l_2 = \frac{\pi}{\sqrt{\beta}},
    \qquad
    l = l_1 + l_2 = \frac{2\pi}{n}.
	$$
	
		\ref{prop:scalar:I} 
		Let $\alpha\geq n^2\geq\beta$.
		We start with the case that $n$ is \textit{even}.
		For the calculation of the scalar product, we use the fact that the Fu\v{c}\'ik eigenfunction $f^n_{\alpha,\beta}$ is $2\pi/n$-periodic and split the integration range into $n/2$ equidistant intervals, see Remark \ref{rem:sym}.
		We use the piecewise definition~\eqref{eq:fucikpiecewise1} of $f^n_{\alpha,\beta}$ to obtain
		\begin{align}
			\notag
			\langle f^n_{\alpha,\beta},\phi_m\rangle &=
			\sum_{k=0}^{\frac{n}{2}-1} \int_{kl}^{kl+\frac{l_1}{2}} \frac{\sqrt{\beta}}{\sqrt{\alpha}}\cos(\sqrt{\alpha}(x-kl))\cos(mx) \,\mathrm{d}x \\
			\notag
			&- \sum_{k=0}^{\frac{n}{2}-1} \int_{kl+\frac{l_1}{2}}^{\left(k+\frac12\right)l+\frac{l_2}{2}}
			\cos\left(\sqrt{\beta}\left(x-\frac{2k+1}{2}l\right)\right)\cos(mx) \,\mathrm{d}x \\
			\label{eq:scal-integral}
			&+ \sum_{k=0}^{\frac{n}{2}-1} \int_{\left(k+\frac12\right)l+\frac{l_2}{2}}^{(k+1)l}
			\frac{\sqrt{\beta}}{\sqrt{\alpha}}\cos(\sqrt{\alpha}(x-(k+1)l))\cos(mx) \,\mathrm{d}x.
		\end{align}
		Assume first that $\alpha \neq m^2$ and $\beta \neq m^2$. 
		Applying the formula \eqref{eq:ap:5}, we derive
		\begin{align}
		\notag
			\langle f^n_{\alpha,\beta},\phi_m\rangle
			&= \sum_{k=0}^{\frac{n}{2}-1}
			\left[
			- \frac{\sqrt{\beta}}{m^2-\alpha}\cos\left(m\left(kl+\frac{l_1}{2}\right)\right)
			- \frac{\sqrt{\beta}}{\sqrt{\alpha}}\frac{m}{m^2-\alpha}\sin(mkl)
			\right] \\
			\notag
			&+ \sum_{k=0}^{\frac{n}{2}-1}
			\left[
			\frac{\sqrt{\beta}}{m^2-\beta}\cos\left(m\left(\left(k+\frac12\right)l+\frac{l_2}2\right)\right)
			+ \frac{\sqrt{\beta}}{m^2-\beta}\cos\left(m\left(kl+\frac{l_1}{2}\right)\right)
			\right] \\
			\notag
			&+ \sum_{k=0}^{\frac{n}{2}-1}
			\left[
			\frac{\sqrt{\beta}}{\sqrt{\alpha}}\frac{m}{m^2-\alpha}\sin(m(k+1)l)
			- \frac{\sqrt{\beta}}{m^2-\alpha}\cos\left(m\left(\left(k+\frac12\right)l+\frac{l_2}{2}\right)\right)
			\right] \\
			\label{eq:scalarnm1}
			&= \frac{(\beta-\alpha)\sqrt{\beta}}{(m^2-\alpha)(m^2-\beta)} \sum_{k=0}^{\frac{n}{2}-1}
			\left[
			\cos\left(m\left(kl+\frac{l_1}{2}\right)\right)
			+ \cos\left(m\left(kl+\frac{l_1}{2}+l_2\right)\right)
			\right].
		\end{align}
		If $m/n\in\mathbb{N}$, then we can easily apply \eqref{eq:cos-sum-2} to get
		\begin{align*}
			\langle f^n_{\alpha,\beta},\phi_m\rangle
			&= \frac{(\beta-\alpha)\sqrt{\beta}}{(m^2-\alpha)(m^2-\beta)}\frac{n}{2}
			\left[
			\cos\left(\frac{ml_1}{2}\right) + \cos\left(\frac{ml_1}{2}+ml_2\right)
			\right] \\
			&= \frac{(\beta-\alpha)\sqrt{\beta}}{(m^2-\alpha)(m^2-\beta)}\frac{n}{2}
			\left[
			\cos\left(\frac{m\pi}{n}-\frac{ml_2}{2}\right) + \cos\left(\frac{m\pi}{n}+\frac{ml_2}{2}\right)
			\right] \\
			&= \frac{(\beta-\alpha)\sqrt{\beta}\,n}{(m^2-\alpha)(m^2-\beta)}\cos\left(\frac{m\pi}{n}+\frac{ml_2}{2}\right)
			= (-1)^{m/n}\frac{(\beta-\alpha)\sqrt{\beta}\,n}{(m^2-\alpha)(m^2-\beta)}\cos\left(\frac{ml_2}{2}\right),
		\end{align*}
		where we used the symmetry of the cosine with respect to its extrema at $m\pi/n$.
		We rewrite this expression in dependence on $\alpha$ by the relation \eqref{eq:ab} and get
		\begin{equation}\label{eq:scal:mdivn1}
			\langle f^n_{\alpha,\beta},\phi_m\rangle
			= \frac{4\alpha^2n^2(n-\sqrt{\alpha})}{(m^2-\alpha)(m^2(2\sqrt{\alpha}-n)^2-n^2\alpha)(2\sqrt{\alpha}-n)}\cos\left(\frac{m\pi}{2\sqrt{\alpha}}\right).
		\end{equation}
		For the case $m/n\notin\mathbb{N}$, we use \eqref{eq:cos-sum} to obtain from \eqref{eq:scalarnm1} that
		\begin{align*}
			\langle f^n_{\alpha,\beta},\phi_m\rangle
			&= \frac{(\beta-\alpha)\sqrt{\beta}}{(m^2-\alpha)(m^2-\beta)}\frac{\sin\left(\frac{nml}{4}\right)}{\sin\left(\frac{ml}{2}\right)} \\
			&\times \left[
			\cos\left(\frac{ml_1}{2}+\frac{(n-2)ml}{4}\right)
			+ \cos\left(\frac{ml_1}{2}+\frac{(n-2)ml}{4}+ml_2\right)
			\right] \\
			&= \frac{(\beta-\alpha)\sqrt{\beta}}{(m^2-\alpha)(m^2-\beta)}\frac{\sin\left(\frac{m\pi}{2}\right)}{\sin\left(\frac{m\pi}{n}\right)}
			\left[
			\cos\left(\frac{m\pi}{2}-\frac{ml_2}{2}\right)
			+ \cos\left(\frac{m\pi}{2}+\frac{ml_2}{2}\right)
			\right]. 
		\end{align*}
		For even $m$ the term $\sin(m\pi/2)$ vanishes and for odd $m$ the sum of cosines cancels.
		Thus, if $m/n\notin\mathbb{N}$, then $\langle f^n_{\alpha,\beta},\phi_m\rangle=0$.
		
		Assume now that $\alpha = m^2$ and $\beta \neq m^2$. 
		In this case, we apply the formulas \eqref{eq:ap:4} and \eqref{eq:ap:5} to calculate the integrals in \eqref{eq:scal-integral} as follows:
		\begin{align}
			\notag
			\langle f^n_{\alpha,\beta},\phi_m\rangle
			&=
			\sum_{k=0}^{\frac{n}{2}-1}
			\left[
			\frac{\sqrt{\beta}}{4m^2} 
			\sin(m(kl+l_1))
			-
			\frac{\sqrt{\beta}}{4m^2} 
			\sin(mkl)
			+
			\frac{\sqrt{\beta} l_1}{4m} 
			\cos(mkl)
			\right]\\
			\notag
			&+
			\sum_{k=0}^{\frac{n}{2}-1}
			\left[
			\frac{\sqrt{\beta}}{m^2-\beta}
			\cos\left(m\left(\left(k+\frac{1}{2}\right)l + \frac{l_2}{2}\right)\right)
			+
			\frac{\sqrt{\beta}}{m^2-\beta}
			\cos\left(m\left(kl + \frac{l_1}{2}\right)\right)
			\right]\\
			\notag
			&+
			\sum_{k=0}^{\frac{n}{2}-1}
			\left[
			\frac{\sqrt{\beta}}{4m^2} 
			\sin(m(k+1)l)
			-
			\frac{\sqrt{\beta}}{4m^2} 
			\sin(m(kl+l_2))
			+
			\frac{\sqrt{\beta}l_1}{4m} 
			\cos(m(k+1)l)
			\right]\\
			\notag
			&=
			\frac{\sqrt{\beta}}{4m^2}
			\sum_{k=0}^{\frac{n}{2}-1}
			\left[ 
			\sin(m(kl+l_1))-\sin(m(kl+l_2))
			\right]\\
			\notag
			&+
			\frac{\sqrt{\beta}}{m^2-\beta}
			\sum_{k=0}^{\frac{n}{2}-1}
			\left[ 
			\cos\left(m\left(\left(k+\frac{1}{2}\right)l + \frac{l_2}{2}\right)\right)
			+
			\cos\left(m\left(kl + \frac{l_1}{2}\right)\right)
			\right]\\
			\label{eq:scal:aeqm-main}
			&+
			\frac{\sqrt{\beta} \pi}{4m^2}
			\sum_{k=0}^{\frac{n}{2}-1}
			\left[ 
			\cos(mkl)+\cos(m(k+1)l)
			\right].
		\end{align}
		Since $\alpha=m^2$, we have
		\begin{align}
			\notag
			\sin(m(kl+l_1)) - \sin(m(kl+l_2))
			&=
			\sin(mkl+\pi) - \sin(m(k+1)l-\pi)
			\\
			\label{eq:scal-simplif1}
			&=
			\sin(m(k+1)l)-\sin(mkl),
		\end{align}
		and hence, recalling that $l=2\pi/n$, we get
		\begin{equation}
			\label{eq:scal:aeqm0}
			\sum_{k=0}^{\frac{n}{2}-1}
			\left[ 
			\sin(m(kl+l_1))-\sin(m(kl+l_2))
			\right]
			=
			\sum_{k=0}^{\frac{n}{2}-1}
			\left[ 
			\sin(m(k+1)l)-\sin(mkl)
			\right]
			=
			0.
		\end{equation}
		In the same way, we obtain
		\begin{align}
			\notag
			\cos\left(m\left(\left(k+\frac{1}{2}\right)l + \frac{l_2}{2}\right)\right)
			+
			\cos\left(m\left(kl + \frac{l_1}{2}\right)\right)
			&=
			\cos\left(m(k+1)l - \frac{\pi}{2}\right)
			+
			\cos\left(mkl + \frac{\pi}{2}\right)
			\\
			\label{eq:scal-simplif2}
			&=
			\sin(m(k+1)l)-\sin(mkl),
		\end{align}
		and we get
		\begin{equation}
			\label{eq:scal:aeqm1}
			\sum_{k=0}^{\frac{n}{2}-1}
			\left[ 
			\cos\left(m\left(\left(k+\frac{1}{2}\right)l + \frac{l_2}{2}\right)\right)
			+
			\cos\left(m\left(kl + \frac{l_1}{2}\right)\right)
			\right]
			=0.
		\end{equation}
		Finally, we observe that
		\begin{equation*}
			\sum_{k=0}^{\frac{n}{2}-1}
			\left[ 
			\cos(mkl)+\cos(m(k+1)l)
			\right]
			=
			2 \sum_{k=0}^{\frac{n}{2}-1} \cos(mkl)
			+(-1)^m-1.
		\end{equation*}
		If $m/n \in \mathbb{N}$, then $m$ must be even since $n$ is even, and hence \eqref{eq:cos-sum-2} implies
		\begin{equation}
			\label{eq:scal:aeqm2}
			2 \sum_{k=0}^{\frac{n}{2}-1} \cos(mkl)
			+(-1)^m-1
			=
			n.
		\end{equation}
		On the other hand, if $m/n \not\in \mathbb{N}$, then 
		we apply \eqref{eq:cos-sum} to deduce that
		\begin{align}
			2 \sum_{k=0}^{\frac{n}{2}-1} \cos(mkl)
			+(-1)^m-1
			\label{eq:scal:aeqm3}
			&=
			\frac{2\sin\left(\frac{m\pi}{2}\right)}{\sin\left(\frac{m\pi}{n}\right)} \cos\left(\frac{m\pi}{2}-\frac{m\pi}{n}\right)
			+(-1)^m-1
			=0.
		\end{align}
		Thus, substituting \eqref{eq:scal:aeqm0}, \eqref{eq:scal:aeqm1}, and either \eqref{eq:scal:aeqm2} or \eqref{eq:scal:aeqm3} into \eqref{eq:scal:aeqm-main}, we conclude that 
		if $m/n \in \mathbb{N}$, then
		\begin{equation}
			\label{aeqm-mdivn-even}
			\langle f^n_{\alpha,\beta},\phi_m\rangle
			=
			\frac{\sqrt{\beta}\pi n}{4m^2}
			=
			\frac{\pi n^2 \alpha}{8m^2 (2\sqrt{\alpha}-n)},
		\end{equation}
		while if $m/n \not\in \mathbb{N}$, then $\langle f^n_{\alpha,\beta},\phi_m\rangle=0$.

		Assume finally that $\alpha \neq m^2$ and $\beta = m^2$.
		Notice that $n>m$ and $m/n\not\in\mathbb{N}$, since $\alpha \geq n^2 \geq \beta$.
		As above, we apply the formulas \eqref{eq:ap:4} and \eqref{eq:ap:5} to calculate the integrals in \eqref{eq:scal-integral} as follows:
		\begin{align}
			\notag
			\langle f^n_{\alpha,\beta},\phi_m\rangle
			&=
			\sum_{k=0}^{\frac{n}{2}-1}
			\left[
			-\frac{m}{m^2-\alpha}
			\cos\left(m\left(kl+\frac{l_1}{2}\right)\right)
			-\frac{m^2}{\sqrt{\alpha}(m^2-\alpha)}
			\sin(mkl)
			\right]\\
			\notag
			&+
			\sum_{k=0}^{\frac{n}{2}-1}
			\bigg[
			-\frac{1}{4m}
			\sin\left(m\left(\frac{2k+1}{2}l + l_2\right)\right)
			+
			\frac{1}{4m}
			\sin\left(m\left(\frac{2k-1}{2}l + l_1\right)\right)
			\\
			\notag
			&\hspace{189pt}
			-\frac{l_2}{2}
			\cos\left(m\frac{2k+1}{2}l\right)
			\bigg]
			\\
			\notag
			&+
			\sum_{k=0}^{\frac{n}{2}-1}
			\left[
			\frac{m^2}{\sqrt{\alpha} (m^2-\alpha)}
			\sin(m(k+1)l)
			-
			\frac{m}{m^2-\alpha}
			\cos\left(m\left(\left(k+\frac{1}{2}\right)l + \frac{l_2}{2}\right)\right)
			\right]\\
			\notag
			&=
			-\frac{m}{m^2-\alpha}
			\sum_{k=0}^{\frac{n}{2}-1}
			\left[
			\cos\left(m\left(kl+\frac{l_1}{2}\right)\right)
			+
			\cos\left(m\left(\left(k+\frac{1}{2}\right)l + \frac{l_2}{2}\right)\right)
			\right]\\
			\notag
			&-
			\frac{1}{4m}
			\sum_{k=0}^{\frac{n}{2}-1}
			\left[
			\sin\left(m\left(\frac{2k+1}{2}l + l_2\right)\right)
			-
			\sin\left(m\left(\frac{2k-1}{2}l + l_1\right)\right)
			\right]\\
			\label{eq:scal:beqm-main}
			&-\frac{l_2}{2}
			\sum_{k=0}^{\frac{n}{2}-1}
			\cos\left(m\frac{2k+1}{2}l\right).
		\end{align}
		Noting that $ml_2=\pi$, we see that
		\begin{align}
			\notag
			&\cos\left(m\left(kl+\frac{l_1}{2}\right)\right)
			+
			\cos\left(m\left(\left(k+\frac{1}{2}\right)l + \frac{l_2}{2}\right)\right)
			\\
			\label{eq:scal:beqm1}
			&=
			\cos\left(mkl+\frac{ml_1}{2}\right)
			-
			\cos\left(mkl+\frac{ml_1}{2}\right)=0
		\end{align}
		and
		\begin{align}
			\notag
			&\sin\left(m\left(\frac{2k+1}{2}l + l_2\right)\right)
			-
			\sin\left(m\left(\frac{2k-1}{2}l + l_1\right)\right)\\
			\label{eq:scal:beqm2}
			&=
			\sin\left(mkl - \frac{ml}{2} + ml_1 + 2ml_2\right)
			-
			\sin\left(mkl - \frac{ml}{2} + ml_1\right)
			=
			0.
		\end{align}
		Moreover, recalling $m/n \not\in \mathbb{N}$ and applying \eqref{eq:cos-sum}, we get
		\begin{equation}
			\label{eq:scal:beqm3}
			\sum_{k=0}^{\frac{n}{2}-1}
			\cos\left(m\frac{2k+1}{2}l\right)
			=
			\frac{\sin\left(\frac{m\pi}{2}\right)}{\sin\left(\frac{m\pi}{n}\right)}
			\cos\left(\frac{m\pi}{2}\right)=0.
		\end{equation}
		Substituting \eqref{eq:scal:beqm1}, \eqref{eq:scal:beqm2}, and \eqref{eq:scal:beqm3} into \eqref{eq:scal:beqm-main}, we conclude that 
		$\langle f^n_{\alpha,\beta},\phi_m\rangle = 0$.

		\medskip
	    Let us now suppose that $n$ is \textit{odd}. 
		In this case, the scalar product takes the following form (cf.\ \eqref{eq:scal-integral}):
		\begin{align}
			\notag
			\langle f^n_{\alpha,\beta},\phi_m\rangle &=
			\sum_{k=0}^{\frac{n-1}{2}-1} \int_{kl}^{kl+\frac{l_1}{2}} \frac{\sqrt{\beta}}{\sqrt{\alpha}}\cos(\sqrt{\alpha}(x-kl))\cos(mx) \,\mathrm{d}x \\
			\notag
			&- \sum_{k=0}^{\frac{n-1}{2}-1} \int_{kl+\frac{l_1}{2}}^{\left(k+\frac12\right)l+\frac{l_2}{2}}
			\cos\left(\sqrt{\beta}\left(x-\frac{2k+1}{2}l\right)\right)\cos(mx) \,\mathrm{d}x \\
			\notag
			&+ \sum_{k=0}^{\frac{n-1}{2}-1} \int_{\left(k+\frac12\right)l+\frac{l_2}{2}}^{(k+1)l}
			\frac{\sqrt{\beta}}{\sqrt{\alpha}}\cos(\sqrt{\alpha}(x-(k+1)l))\cos(mx) \,\mathrm{d}x \\
			\notag
			&+ \int_{\frac{n-1}{2}l}^{\frac{n-1}{2}l+\frac{l_1}{2}}
			\frac{\sqrt{\beta}}{\sqrt{\alpha}}\cos\left(\sqrt{\alpha}\left(x-\frac{n-1}{2}l\right)\right)\cos(mx) \,\mathrm{d}x \\
			\label{eq:scal-integral-2}
			&- \int_{\frac{n-1}{2}l+\frac{l_1}{2}}^{\pi}
			\cos\left(\sqrt{\beta}\left(x-\frac{n}{2}l\right)\right)\cos(mx) \,\mathrm{d}x.
		\end{align}
		Assume first that $\alpha \neq m^2$ and $\beta \neq m^2$. 
		Reusing most of the calculations from \eqref{eq:scalarnm1} and noting that
		\begin{align}
		\notag
		\int_{\frac{n-1}{2}l+\frac{l_1}{2}}^{\pi}
			\cos\left(\sqrt{\beta}\left(x-\frac{n}{2}l\right)\right)\cos(mx) \,\mathrm{d}x
			&=
			-
			\frac{\sqrt{\beta}}{m^2-\beta}
			\cos\left(m\left(\frac{n-1}{2}l + \frac{l_1}{2}\right)\right)
			\\
			\label{eq:lastintegr1}
			&=
		-\frac{\sqrt{\beta}}{m^2-\beta}
		\cos\left(m\left(\pi -\frac{l_2}{2}\right)\right)
		\end{align}
		by \eqref{eq:ap:5},
		we obtain
		\begin{equation}\label{eq:scalnm2}
		\langle f^n_{\alpha,\beta},\phi_m\rangle
		= \frac{(\beta-\alpha)\sqrt{\beta}}{(m^2-\alpha)(m^2-\beta)}
		\left[
		\sum_{k=0}^{\frac{n-1}{2}} \cos\left(m\left(kl+\frac{l_1}{2}\right)\right)
		+ \sum_{k=0}^{\frac{n-3}{2}} \cos\left(m\left(kl+\frac{l_1}{2}+l_2\right)\right)
		\right].
		\end{equation}
		For $m/n\in\mathbb{N}$, we use \eqref{eq:cos-sum-2} and the symmetry of the cosine with respect to its extrema at $m\pi/n$ to get
		\begin{align}
			\notag
			\langle f^n_{\alpha,\beta},\phi_m\rangle &=    
			\frac{(\beta-\alpha)\sqrt{\beta}}{(m^2-\alpha)(m^2-\beta)}
			\left[
			\frac{n+1}{2} \cos\left(\frac{ml_1}{2}\right)
			+ \frac{n-1}{2} \cos\left(\frac{ml_1}{2}+ml_2\right)
			\right] \\
			\notag
			&= \frac{(\beta-\alpha)\sqrt{\beta}}{(m^2-\alpha)(m^2-\beta)}   
			\left[
			\frac{n+1}{2} \cos\left(\frac{m\pi}{n}-\frac{ml_2}{2}\right)
			+ \frac{n-1}{2} \cos\left(\frac{m\pi}{n}+\frac{ml_2}{2}\right)
			\right] \\
			\notag
			&= \frac{(\beta-\alpha)\sqrt{\beta}\,n}{(m^2-\alpha)(m^2-\beta)} \cos\left(\frac{m\pi}{n}+\frac{ml_2}{2}\right) 
			= (-1)^{m/n}\frac{(\beta-\alpha)\sqrt{\beta}\,n}{(m^2-\alpha)(m^2-\beta)} \cos\left(\frac{ml_2}{2}\right) \\
			\label{eq:scal:mdivn2}
			&= \frac{4\alpha^2n^2(n-\sqrt{\alpha})}{(m^2-\alpha)(m^2(2\sqrt{\alpha}-n)^2-n^2\alpha)(2\sqrt{\alpha}-n)}\cos\left(\frac{m\pi}{2\sqrt{\alpha}}\right).
		\end{align}
		Notice that \eqref{eq:scal:mdivn2} coincides with \eqref{eq:scal:mdivn1}. 
		This establishes the claimed formula \eqref{eq:scal:mdivn0} and hence completes the proof of the assertion \ref{prop:scalar:I:i}.
		In the case of $m/n\notin\mathbb{N}$, we apply \eqref{eq:cos-sum} to \eqref{eq:scalnm2} and get
		\begin{align*}
			\langle f^n_{\alpha,\beta},\phi_m\rangle 
			&=
			\frac{(\beta-\alpha)\sqrt{\beta}}{(m^2-\alpha)(m^2-\beta)}
			\frac{1}{\sin\left(\frac{m\pi}{n}\right)}
			\sin\left(\frac{(n+1)ml}{4}\right)\cos\left(\frac{ml_1}{2}+\frac{(n-1)ml}{4}\right)
			\\
			&+
			\frac{(\beta-\alpha)\sqrt{\beta}}{(m^2-\alpha)(m^2-\beta)}
			\frac{1}{\sin\left(\frac{m\pi}{n}\right)}
			\sin\left(\frac{(n-1)ml}{4}\right)\cos\left(\frac{ml_1}{2}+ml_2+\frac{(n-3)ml}{4}\right).
		\end{align*}
		By making use of \eqref{eq:cos-sin} and the antisymmetry of the sine with respect to its zero at $m\pi$, we see that
		\begin{align*}
			\langle f^n_{\alpha,\beta},\phi_m\rangle &=
			\frac{(\beta-\alpha)\sqrt{\beta}}{2(m^2-\alpha)(m^2-\beta)}
			\frac{1}{\sin\left(\frac{m\pi}{n}\right)}
			\left[
			\sin\left(m\pi+\frac{ml_1}{2}\right)
			+ \sin\left(m\pi-\frac{ml_1}{2}\right)
			\right]=0.
		\end{align*}
		
		Assume now that $\alpha = m^2$ and $\beta \neq m^2$. 
		In this case, we argue in much the same way as in the derivation of \eqref{eq:scal:aeqm-main}. By applying   \eqref{eq:lastintegr1}, the integrals in \eqref{eq:scal-integral-2} are calculated as follows:
		\begin{align}
			\notag
			\langle f^n_{\alpha,\beta},\phi_m\rangle
			&=
			\frac{\sqrt{\beta}}{4m^2}
			\sum_{k=0}^{\frac{n-1}{2}-1}
			\left[ 
			\sin(m(kl+l_1))-\sin(m(kl+l_2))
			\right]\\
			\notag
			&+
			\frac{\sqrt{\beta}}{m^2-\beta}
			\sum_{k=0}^{\frac{n-1}{2}-1}
			\left[ 
			\cos\left(m\left(\left(k+\frac{1}{2}\right)l + \frac{l_2}{2}\right)\right)
			+
			\cos\left(m\left(kl + \frac{l_1}{2}\right)\right)
			\right]\\
			\notag
			&+
			\frac{\sqrt{\beta} \pi}{4m^2}
			\sum_{k=0}^{\frac{n-1}{2}-1}
			\left[ 
			\cos(mkl)+\cos(m(k+1)l)
			\right]
			+
			\frac{\sqrt{\beta}}{4m^2}
			\sin\left(\frac{ml(n-1)}{2}\right)
			\\
			\notag
			&+
			\frac{\sqrt{\beta}}{4m^2}
			\sin\left(m\left(\frac{n-1}{2}l + l_1\right)\right)
			-
			\frac{\sqrt{\beta}}{4m^2}
			\sin\left(\frac{ml(n-1)}{2}\right)
			\\
			\label{eq:scal:aeqm-main-odd}
			&+
			\frac{\sqrt{\beta}\pi}{4m^2}
			\cos\left(\frac{ml(n-1)}{2}\right)
			+
			\frac{\sqrt{\beta}}{m^2-\beta}
			\cos\left(m\left(\frac{n-1}{2}l + \frac{l_1}{2}\right)\right).
		\end{align}
		Using \eqref{eq:scal-simplif1}, we see that
		\begin{equation}
			\label{eq:scal:aeqm0odd}
			\sum_{k=0}^{\frac{n-1}{2}-1}
			\left[ 
			\sin(m(kl+l_1))-\sin(m(kl+l_2))
			\right]
			+
			\sin\left(m\left(\frac{n-1}{2}l + l_1\right)\right)
			=0.
		\end{equation}
		Similarly, with \eqref{eq:scal-simplif2}, we get
		\begin{equation}
			\sum_{k=0}^{\frac{n-1}{2}-1}
			\left[ 
			\cos\left(m\left(\left(k+\frac{1}{2}\right)l + \frac{l_2}{2}\right)\right)
			+
			\cos\left(m\left(kl + \frac{l_1}{2}\right)\right)
			\right]
			\label{eq:scal:aeqm1odd}
			+
			\cos\left(m\left(\frac{n-1}{2}l + \frac{l_1}{2}\right)\right)
			=0.
		\end{equation}
		Finally, we observe
		\begin{equation*}
			\sum_{k=0}^{\frac{n-1}{2}-1}
			\left[ 
			\cos(mkl)+\cos(m(k+1)l)
			\right]
			+
			\cos\left(\frac{ml(n-1)}{2}\right)
			=
			2 \sum_{k=0}^{\frac{n-1}{2}} \cos(mkl)
			-1.
		\end{equation*}
		If $m/n \in \mathbb{N}$, then 
		we apply \eqref{eq:cos-sum-2} to obtain
		\begin{equation}
			\label{eq:scal:aeqm2odd}
			2 \sum_{k=0}^{\frac{n-1}{2}} \cos(mkl)
			-1
			=
			n.
		\end{equation}
		On the other hand, if $m/n \not\in \mathbb{N}$, then 
		we use \eqref{eq:cos-sum} and \eqref{eq:cos-sin2} to deduce that
		\begin{equation}
			2 \sum_{k=0}^{\frac{n-1}{2}} \cos(mkl)
			-1
			=
			\label{eq:scal:aeqm3odd}
			\frac{2\sin\left(\frac{(n+1)ml}{4}\right) \cos\left(\frac{(n-1)ml}{4}\right)}{\sin\left(\frac{ml}{2}\right)}
			-1
			=
			\frac{\sin\left(\frac{m\pi}{n}\right)+\sin(m\pi)}{\sin\left(\frac{m\pi}{n}\right)} 
			-1
			=0.
		\end{equation}
		Therefore, substituting \eqref{eq:scal:aeqm0odd}, \eqref{eq:scal:aeqm1odd}, and either \eqref{eq:scal:aeqm2odd} or \eqref{eq:scal:aeqm3odd} into \eqref{eq:scal:aeqm-main-odd}, we conclude that 
		if $m/n \in \mathbb{N}$, then
		\begin{equation}
			\label{aeqm-mdivn-odd}
			\langle f^n_{\alpha,\beta},\phi_m\rangle
			=
			\frac{\sqrt{\beta}\pi n}{4m^2}
			=
			\frac{\pi n^2 \alpha}{8m^2 (2\sqrt{\alpha}-n)},
		\end{equation}
		while if $m/n \not\in \mathbb{N}$, then $\langle f^n_{\alpha,\beta},\phi_m\rangle=0$.		
		Noting that \eqref{aeqm-mdivn-odd} coincides with \eqref{aeqm-mdivn-even}, we complete the proof of the claimed equality \eqref{eq:scal:aeqm-mdivn-main}.
		That is, the proof of the assertion \ref{prop:scalar:I:ii} is finished.
		
		Assume finally that $\alpha \neq m^2$ and $\beta = m^2$.
		As in the derivation of \eqref{eq:scal:beqm-main}, we apply the formulas \eqref{eq:ap:4} and \eqref{eq:ap:5} together with \eqref{eq:scal:beqm1} and \eqref{eq:scal:beqm2} to calculate the integrals in \eqref{eq:scal-integral-2} as follows:
		\begin{align}
			\notag
			\langle f^n_{\alpha,\beta},\phi_m\rangle
			&=
			-\frac{l_2}{2}
			\sum_{k=0}^{\frac{n-1}{2}-1}
			\cos\left(m \frac{2k+1}{2} l \right)
			+
			\frac{m^2}{\sqrt{\alpha}(m^2-\alpha)}
			\sin\left(m\frac{n-1}{2}l\right)
			\\
			\notag
			&
			-\frac{m}{m^2-\alpha}
			\cos\left(m\left(\frac{n-1}{2}l + \frac{l_1}{2}\right)\right)
			-
			\frac{m^2}{\sqrt{\alpha}(m^2-\alpha)}
			\sin\left(m\frac{n-1}{2}l\right)
			\\
			\notag
			&
			-\frac{1}{4m}
			\sin\left(m\left(2\pi - \frac{nl}{2}\right)\right)
			+\frac{1}{4m}
			\sin\left(m\left(\left(\frac{n}{2}-1\right)l +l_1\right)\right)
			\\
			\label{eq:scalaneqmb}
			&-
			\frac{1}{2}\left(\pi - \frac{n-1}{2}l - \frac{l_1}{2}\right)
			\cos\left(\frac{mnl}{2}\right).
		\end{align}
		We have
		\begin{align*}
		&\cos\left(m\left(\frac{n-1}{2}l + \frac{l_1}{2}\right)\right)
		=
		\cos\left(m\pi - \frac{\pi}{2}\right) = 0, \\
		&\sin\left(m\left(2\pi - \frac{nl}{2}\right)\right)
		=0, \\
		&\sin\left(m\left(\left(\frac{n}{2}-1\right)l +l_1\right)\right)
		=
		\sin(m\pi - \pi)=0, \\
		&\frac{1}{2}\left(\pi - \frac{n-1}{2}l - \frac{l_1}{2}\right)
		\cos\left(\frac{mnl}{2}\right)
		=
		\frac{l_2}{4}\cos\left(m\pi\right)
		=
		\frac{l_2}{4}(-1)^m.
		\end{align*}
		Since $\alpha\geq n^2\geq\beta$, we get $m/n \not\in \mathbb{N}$. Hence, we can use \eqref{eq:cos-sum} to obtain
		\begin{align*}
			&-\frac{l_2}{2}
			\sum_{k=0}^{\frac{n-1}{2}-1}
			\cos\left(m \frac{2k+1}{2} l \right)
			=
			-\frac{l_2}{2}
			\sum_{k=0}^{\frac{n-1}{2}-1}
			\cos\left(\frac{m\pi}{n} +mkl \right)
			\\
			&=
			-\frac{l_2}{2}
			\frac{\sin\left(\frac{n-1}{2} \frac{m \pi}{n}\right)}{\sin\left(\frac{m\pi}{n}\right)}
			\cos\left(\frac{m\pi}{n}+\frac{n-3}{2}\frac{m\pi}{n}\right)
			=
			-\frac{l_2}{2} \frac{\sin\left(\frac{n-1}{2} \frac{m \pi}{n}\right)\cos\left(\frac{n-1}{2} \frac{m \pi}{n}\right)}{\sin\left(\frac{m\pi}{n}\right)}\\
			&=
			-\frac{l_2}{4} \frac{\sin\left(\frac{(n-1)m \pi}{n}\right)}{\sin\left(\frac{m\pi}{n}\right)}
			=
			\frac{l_2}{4}
			\frac{\sin\left(\frac{m\pi}{n}-m\pi\right)}{\sin\left(\frac{m\pi}{n}\right)}
			=
			\frac{l_2}{4}(-1)^m.
		\end{align*}
		Substituting the above equalities into \eqref{eq:scalaneqmb}, we conclude that $\langle f^n_{\alpha,\beta},\phi_m\rangle=0$.
		Finally, we have covered all cases for the assumption $m/n \not\in \mathbb{N}$ to justify the assertion \ref{prop:scalar:I:iii}.

		\ref{prop:scalar:II} 
		Let $\beta > n^2 > \alpha$. 
		The formulas \eqref{eq:scal:mdivn10} and \eqref{aeqm-mdivn-main2}, as well as the statement \ref{prop:scalar:II:iii}, can be established either in a similar way as their counterparts in the case $\alpha \geq n^2 \geq\beta$ above, or by using the relations in Remark~\ref{rem:sym}. 
		We omit details. 
	\end{proof}

	\section{}\label{sec:appendix:rational}
	
	We provide an auxiliary monotonicity result needed in the proof of Theorem~\ref{thm:main2}. 
	
	\begin{lemma}\label{lem:rational}
		Let $k \in \mathbb{N}$.
		Then the following two functions are strictly increasing with respect to $\gamma \in (1,\infty)$:
		$$
		B_1(\gamma) =\frac{(\sqrt{\gamma}-1)((12+\pi^2)\gamma+(18-\pi^2)\sqrt{\gamma}-6)}{(\sqrt{\gamma}+1)(2\sqrt{\gamma}-1)(3\sqrt{\gamma}-1)}
		$$
		and
		$$
		B_2(\gamma) =\frac{\sqrt{\gamma^3} (\sqrt{\gamma}-1)}{(k+\sqrt{\gamma}) (k^2(2\sqrt{\gamma}-1)^2-\gamma)(2\sqrt{\gamma}-1)}.
		$$
	\end{lemma}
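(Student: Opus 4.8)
The plan is to pass to the variable $t=\sqrt{\gamma}$, which ranges over $(1,\infty)$; since $\gamma\mapsto\sqrt{\gamma}$ is an increasing homeomorphism of $(1,\infty)$, it suffices to show that the transformed functions are strictly increasing in $t$. Writing $t=\sqrt\gamma$, we have $B_1=N_1(t)/D_1(t)$ with
$$
N_1(t)=(t-1)\bigl((12+\pi^2)t^2+(18-\pi^2)t-6\bigr),\qquad D_1(t)=(t+1)(2t-1)(3t-1),
$$
and $B_2=N_2(t)/D_2(t)$ with $N_2(t)=t^3(t-1)$ and, using the factorization $k^2(2t-1)^2-t^2=\bigl((2k-1)t-k\bigr)\bigl((2k+1)t-k\bigr)$,
$$
D_2(t)=(t+k)\bigl((2k-1)t-k\bigr)\bigl((2k+1)t-k\bigr)(2t-1).
$$
On $(1,\infty)$ the denominator $D_1$ is a product of positive linear factors, and so is $D_2$ in the range $k\ge 2$ that is relevant for the estimate \eqref{eq:ck} in the proof of Theorem~\ref{thm:main2}; hence $D_1,D_2>0$ there, and the sign of $B_i'$ equals the sign of the polynomial $W_i:=N_i'D_i-N_iD_i'$.

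For $B_1$ both $N_1$ and $D_1$ are cubics, so $W_1$ has degree at most $5$; a direct expansion shows the degree-$5$ coefficient cancels, and after the further substitution $t=1+s$ one obtains
$$
W_1(1+s)=96+(336+8\pi^2)\,s+(360+28\pi^2)\,s^2+(96+32\pi^2)\,s^3+(13\pi^2-24)\,s^4.
$$
Every coefficient is strictly positive (the last one since $13\pi^2>24$), so $W_1>0$ for $s>0$, i.e.\ $B_1$ is strictly increasing on $(1,\infty)$.

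For $B_2$ the computation is organized the same way but carries the extra parameter $k$. Since $N_2=t^3(t-1)$ we have $N_2'=t^2(4t-3)$, hence $W_2=t^2\bigl[(4t-3)D_2(t)-t(t-1)D_2'(t)\bigr]$; the bracketed factor has $t$-degree at most $4$ (the degree-$5$ terms cancel) and its coefficients are polynomials in $k$. Substituting $t=1+s$ and then $k=2+j$ with $j\ge0$, one checks that every coefficient of the resulting polynomial in $(s,j)$ is nonnegative — the case $k=2$, where the bracket equals $9+36s+40s^2+32s^3+43s^4$ in $s=t-1$, already indicates the shape of the answer. This gives $W_2>0$ on $(1,\infty)$ for $k\ge2$, so $B_2$ is strictly increasing there.

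I expect the $B_2$ case to be the only real obstacle: unlike for $B_1$, one must carry a genuinely two-variable polynomial through the computation, verify the cancellation of its top $t$-degree term, and organize the remaining coefficients so that their nonnegativity for $k\ge2$ is transparent (the shift $k=2+j$ achieves this at the cost of a lengthy but entirely elementary expansion). The reduction to polynomial positivity and the $B_1$ computation are routine by comparison.
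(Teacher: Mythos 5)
Your argument is correct in substance, but it takes a genuinely different route from the paper. The paper performs the same initial reduction (it substitutes $\sqrt{\gamma}=x+1$, i.e.\ your $s=t-1$), but then avoids differentiation entirely: it writes each $B_i$ as a quotient of polynomials in $x$ with nonnegative coefficients and invokes the coefficient-ratio monotonicity criterion of \cite[Theorem 4.4]{heikkala} --- for $B_1$ a chain of four ratios of numerical coefficients, and for $B_2$ a chain of five ratios whose denominators are cubic polynomials in $k$, so that the parameter enters only through elementary polynomial comparisons. You instead prove positivity of the Wronskian-type numerator $W_i=N_i'D_i-N_iD_i'$ after the shift $t=1+s$ (and $k=2+j$). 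The paper's proof is a few lines per function but leans on the cited criterion; yours is self-contained at the price of expanding a $k$-dependent quartic. Your explicit computations check out: the $B_1$ expansion of $W_1(1+s)$ and the $k=2$ bracket are both correct, and the deferred general-$k$ step does work: the bracket $(4t-3)D_2(t)-t(t-1)D_2'(t)$ equals $(8k^3-4k^2-2k-1)t^4-(24k^3-12k^2-2k)t^3+(30k^3-9k^2-k)t^2-(16k^3-2k^2)t+3k^3$, and in powers of $s=t-1$ its coefficients are $(k-1)(k+1)^2$, $4(k-1)(k+1)^2$, $6k^3+3k^2-7k-6$, $8k^3-4k^2-6k-4$, $8k^3-4k^2-2k-1$, all strictly positive for $k\ge 2$, so the $j$-shift is not even needed.

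One point should be stated differently: you restrict to $k\ge 2$ on the grounds that $D_2$ is then a product of positive factors, but $D_2>0$ on $t>1$ also for $k=1$ (there $(2k-1)t-k=t-1>0$). The real reason $k=1$ must be excluded is that the asserted monotonicity fails there: for $k=1$ the factor $k^2(2\sqrt{\gamma}-1)^2-\gamma$ vanishes at $\gamma=1$, $B_2$ reduces to $t^3/((t+1)(2t-1)(3t-1))$, and the numerator of its derivative is $t^2(t^2-8t+3)<0$ for $1<t<4+\sqrt{13}$, so $B_2$ decreases near $\gamma=1$. The paper's own chain likewise degenerates at $k=1$ (its first denominator $(k-1)(k+1)^2$ vanishes, and $1/4<3/16$ is false), and only $k\ge 2$ is needed for the estimate \eqref{eq:ck}; so your restriction is the right one, but it should be justified by this observation (equivalently, the lemma should be read with $k\ge 2$) rather than by positivity of $D_2$.
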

	\begin{proof}
		We start by considering $B_1$.
		Let us make the substitution $\sqrt{\gamma}=x+1$ to obtain
		$$
		B_1(x)
		= 
		\frac{24x+(42+\pi^2)x^2+(12+\pi^2)x^3}
		{2+11x+17x^2+6x^3}.
		$$
		Since the ratios of coefficients at the powers of $x$ satisfy the chain of inequalities
		$$
		\frac{0}{2} 
		<
		\frac{24}{11}
		<
		\frac{42+\pi^2}{17}
		<
		\frac{12+\pi^2}{6},
		$$
		we can apply \cite[Theorem 4.4]{heikkala} to conclude that $B_1$ is increasing with respect to $x \in (0,\infty)$, and hence with respect to $\gamma \in (1,\infty)$.
		
		For $B_2$ we use the same substitution $\sqrt{\gamma}=x+1$. We perform a standard comparison of polynomials to see that the ratios of coefficients at the powers of $x$ satisfy the following chain of inequalities:
		\begin{align*}
			\frac{0}{(k-1)(k+1)^2}
			<
			\frac{1}{6k^3+7k^2-4k-5}
			&<
			\frac{3}{12k^3+18k^2-5k-9}
			\\
			&<
			\frac{3}{8k^3+20k^2-2k-7}
			<
			\frac{1}{8k^2-2}.
		\end{align*}	
		Thus,  by applying \cite[Theorem 4.4]{heikkala} again, we deduce that $B_2$ is increasing with respect to $\gamma \in (1,\infty)$.
	\end{proof}


\begin{thebibliography}{99}
	
	    \bibitem{arias}
	    Arias, M., Campos, J., Cuesta, M., \& Gossez, J. P. (2008). An asymmetric Neumann problem with weights. Annales de l'Institut Henri Poincar\'e C, Analyse non lin\'eaire, 25(2),  267-280.
	    \href{https://doi.org/10.1016/j.anihpc.2006.07.006}{\nolinkurl{DOI:10.1016/j.anihpc.2006.07.006}}
	
	    \bibitem{bary}
Bary, N. (1944). Sur le syst\`{e}mes complets de fonctions othogonales. Recueil math\'ematique de la Soci\'et\'e math\'ematique de Moscou [Matematicheskii Sbornik], 14(56),  51-108.
\url{http://www.mathnet.ru/php/archive.phtml?wshow=paper&jrnid=sm&paperid=6184}
		
		\bibitem{BB}
		Baustian, F., \& Bobkov, V. (2022). Basis properties of Fu\v{c}\'ik eigenfunctions. 
		Analysis Mathematica, \textit{online first}.
		\href{https://doi.org/10.1007/s10476-022-0127-9}{\nolinkurl{DOI:10.1007/s10476-022-0127-9}}
		
		\bibitem{BB1}
		Baustian, F., \& Bobkov, V. (2021). Basisness of Fu\v{c}\'ik eigenfunctions for the Dirichlet Laplacian. 
		\href{https://arxiv.org/abs/2111.08329}{arXiv:2111.08329}.
		
		\bibitem{BM2016}
		Boulton, L., \& Melkonian, H. (2016). Generalised cosine functions, basis and regularity properties. Journal of Mathematical Analysis and Applications, 444(1), 25-46.
		\href{https://doi.org/10.1016/j.jmaa.2016.06.024}{\nolinkurl{DOI:10.1016/j.jmaa.2016.06.024}}
	
	    \bibitem{dancer}
Dancer, E. N. (1977). On the Dirichlet problem for weakly non-linear elliptic partial differential equations. 
Proceedings of the Royal Society of Edinburgh Section A: Mathematics, 76(4), 283-300.
\href{https://doi.org/10.1017/S0308210500019648}{\nolinkurl{DOI:10.1017/S0308210500019648}}
			
		
		\bibitem{DHMN}
		Dr\'abek, P., Holubov\'a, G., Matas, A., \& Ne\v{c}esal, P. (2003). Nonlinear models of suspension bridges: discussion of the results. Applications of Mathematics, 48(6), 497-514.
		\href{https://doi.org/10.1023/B:APOM.0000024489.96314.7f}{\nolinkurl{DOI:10.1023/B:APOM.0000024489.96314.7f}}
		
		\bibitem{duff}
		Duffin, R.~J., \& Eachus, J.~J. (1942). Some notes on an expansion theorem of Paley and Wiener.
		Bulletin of the American Mathematical Society, 48(12), 850-855.
		\url{https://projecteuclid.org/euclid.bams/1183504861}
		
		\bibitem{edmunds1}
		Edmunds, D. E., Gurka, P., \& Lang, J. (2014). Basis properties of generalized trigonometric functions. Journal of Mathematical Analysis and Applications, 420(2), 1680-1692.
		\href{https://doi.org/10.1016/j.jmaa.2014.06.015}{\nolinkurl{DOI:10.1016/j.jmaa.2014.06.015}}

        \bibitem{exner}
        Exnerov\'a, V. H. (2012). Notes on the Fu\v{c}\'ik spectrum and the mixed boundary value problem. Commentationes Mathematicae Universitatis Carolinae, 53(4), 615-627.
        \url{http://dml.cz/dmlcz/143194}

        \bibitem{fucik}
Fu\v{c}\'ik, S. (1976). Boundary value problems with jumping nonlinearities. 
\v{C}asopis pro p\v{e}stov\'an\'i matematiky, 101(1), 69-87.
\url{http://dml.cz/dmlcz/108683}
		
		\bibitem{heikkala}
		Heikkala, V., Vamanamurthy, M. K., \& Vuorinen, M. (2009). Generalized elliptic integrals. Computational Methods and Function Theory, 9(1), 75-109.
		\href{https://doi.org/10.1007/BF03321716}{\nolinkurl{DOI:10.1007/BF03321716}}
		
		\bibitem{gazzola}
		Gazzola, F. (2015). Mathematical models for suspension bridges. Springer.
		\href{https://doi.org/10.1007/978-3-319-15434-3}{\nolinkurl{DOI:10.1007/978-3-319-15434-3}}
		
		
		\bibitem{kato}
		Kato, T. (1980). Perturbation Theory for Linear Operators. Springer.
		\href{https://doi.org/10.1007/978-3-642-66282-9}{\nolinkurl{DOI:10.1007/978-3-642-66282-9}}
		
		\bibitem{knapp}
		Knapp, M. P. (2009). Sines and cosines of angles in arithmetic progression. Mathematics Magazine, 82(5), 371-372.
		\href{https://doi.org/10.4169/002557009X478436}{\nolinkurl{DOI:10.4169/002557009X478436}}
		
		\bibitem{LMk}
		Lazer, A. C., \& McKenna, P. J. (1990). Large-amplitude periodic oscillations in suspension bridges: some new connections with nonlinear analysis. SIAM Review, 32(4), 537-578.
		\href{https://doi.org/10.1137/1032120}{\nolinkurl{DOI:10.1137/1032120}}

        \bibitem{massa}
        Massa, E. (2004). On a variational characterization of a part of the Fucik spectrum and a superlinear equation for the Neumann $p$-Laplacian in dimension one. Advances in Differential Equations, 9(5-6), 699.
        \url{http://projecteuclid.org/euclid.ade/1355867941}
		
		\bibitem{MW}
		Motreanu, D., \& Winkert, P. (2012). The Fu\v{c}\'ik spectrum for the negative $p$-Laplacian with different boundary conditions. In Nonlinear Analysis (pp. 471-485). Springer, New York, NY.
    	\href{https://doi.org/10.1007/978-1-4614-3498-6_28}{\nolinkurl{DOI:10.1007/978-1-4614-3498-6_28}}
		
		\bibitem{rynne}
		Rynne, B. P. (2000). The Fucik spectrum of general Sturm–Liouville problems. Journal of Differential Equations, 161(1), 87-109.
		\href{https://doi.org/10.1006/jdeq.1999.3661}{\nolinkurl{DOI:10.1006/jdeq.1999.3661}}

	
        \bibitem{skal1}
Shkalikov, A. A. (2016).
Perturbations of self-adjoint and normal operators with discrete spectrum.
Russian Mathematical Surveys, 71(5), 907.
\href{https://doi.org/10.1070/RM9740}{\nolinkurl{DOI:10.1070/RM9740}}	
		
		\bibitem{sing}
        Singer, I. Bases in Banach Spaces I. Springer, 1970.
        \url{https://www.springer.com/gp/book/9783642516351}
    
		
		\bibitem{young}
		Young, R.~M. (1980). An Introduction to Nonharmonic Fourier Series. Academic Press.
		
	\end{thebibliography}
\end{document}